\documentclass[11pt]{amsart}
\usepackage{latexsym}
\usepackage{amsmath,amsthm,amsfonts,amssymb,graphicx,epsfig,latexsym,color,mathrsfs}
\usepackage{epsf,enumerate}

\newtheorem{thm}{Theorem}[section]
\newtheorem{lemma}[thm]{Lemma}
\newtheorem{cor}[thm]{Corollary}
\newtheorem{prop}[thm]{Proposition}
\newtheorem{defn}[thm]{Definition}
{}

\theoremstyle{remark}

\newtheorem{discussion}[thm]{Discussion}

\newtheorem*{definition*}{Definition}
\newtheorem*{remark*}{Remark}

\bibliographystyle{plain}   

\def\R{{\mathcal R}}

\def\P{{\mathcal P}}
\def\D{{\mathcal D}}

\def\F{{\mathcal F}}

\renewcommand{\phi}{\varphi}

\newcommand{\cvn}{\text{cv}_N}

\newcommand{\FN}{F_N}

\newcommand{\ssm}{\smallsetminus} 

\newcommand{\smf}{\smallfrown}

\begin{document}

\title{Reducing Systems for Very Small Trees}

\author{Patrick Reynolds}

\address{\tt Department of Mathematics, University of Utah, 155 S 1400 E, Room 233, Salt Lake City, Utah 84112, USA}
  \email{\tt
  reynolds@math.utah.edu} 

\date{\today}

\begin{abstract}
We study very small trees from the point of view of reducing systems of free factors, which are analogues of reducing systems of curves for a surface lamination; a non-trivial, proper free factor $F \leq \FN$ \emph{reduces} $T$ if and only if $F$ acts on some subtree of $T$ with dense orbits.  We characterize those trees, called \emph{arational}, which do not admit a reduction by any free factor: $T$ is arational if and only if either $T$ is free and indecomposable or $T$ is dual to a surface with one boundary component equipped with an arational measured foliation.  To complement this result, we establish some results giving control over the collection of all factors reducing a given tree.  As an application, we deduce a form of the celebrated Bestvina-Handel classification theorem for elements of $Out(\FN)$ \cite{BH92}.  We also include an appendix containing examples of very small trees.  The results of this paper are used in \cite{BR12}, where we describe the Gromov boundary of the complex of free factors.
\end{abstract}

\maketitle

\section{Introduction}

This article is about the structure of very small $\FN$-trees that are not free and simplicial; such trees represent points in the boundary of the Culler-Vogtmann Outer space \cite{CL95}, and every very small $\FN$-tree arises in this way \cite{BF94}.  We use $\partial \cvn$ do denote the set of very small $\FN$-trees that are not free and simplicial.

For $T \in \partial \cvn$, say that a non-trivial, proper free factor $F \leq \FN$ \emph{reduces} $T$ if there is an $F$-invaraint subtree $Y \subseteq T$ such that the action of $F$ on $Y$ has a dense orbit; the general definition for reducing subgroups is given in Section \ref{S.Reduce}.  Our first main result is a characterization of trees that are not reduced by any non-trivial, proper free factor; we call these trees \emph{arational}.  

\begin{thm}
 Let $T \in \partial \cvn$.  The following are equivalent:
\begin{enumerate}
 \item [(i)] $T$ is arational,
 \item [(ii)] $T$ is indecomposable, and if $T$ is not free, then $T$ is dual to a measured geodesic lamination on a once-punctured surface with minimal and filling support.
\end{enumerate}

\end{thm}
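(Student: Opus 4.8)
The plan is to prove the two implications separately, drawing on the structure theory of very small trees — Levitt's decomposition of such a tree as a graph of actions over a simplicial tree, with vertex trees that are points or have dense orbits; Guirardel's dichotomy that a minimal very small tree with dense orbits is either indecomposable or carries a non-trivial transverse covering; and the theory of geometric trees of surface type — together with the results of the earlier sections, which let one house a reducing subgroup inside a reducing \emph{free factor}.

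\emph{(i) $\Rightarrow$ (ii).} Assume $T$ is arational. I would first show $T$ has dense orbits: otherwise the simplicial part of Levitt's decomposition is non-degenerate, and collapsing the dense-orbit vertex trees yields a non-trivial minimal very small simplicial $F_N$-tree $\hat T$, i.e.\ a non-trivial very small splitting of $F_N$. Since $\chi(F_N)=1-N<0$ is not the Euler characteristic of a graph of cyclic groups, either some edge of $\hat T$ has trivial stabilizer — producing a free splitting, hence a proper free factor that is elliptic in $\hat T$ and so fixes a point of $T$ or acts on a dense-orbit vertex tree of $T$, in either case reducing $T$ — or, by the earlier control results, $T$ is directly reduced by a proper free factor; either way arationality is contradicted. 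Running the same argument on the skeleton of a transverse covering promotes ``dense orbits'' to ``indecomposable.'' Finally, if $T$ is not free, pick a non-trivial point stabilizer; since arc and tripod stabilizers of $T$ are trivial, the theory of geometric trees forces $T$ to be dual to a measured geodesic lamination on a compact surface $\Sigma$ with $\pi_1\Sigma\cong F_N$ acting, so $\Sigma$ has non-empty boundary; if $\Sigma$ had two or more boundary components, one of them would be a primitive element of $F_N$ and hence give a proper free factor fixing a point of $T$, contradicting arationality. Hence $\Sigma$ is once-punctured, and indecomposability of the dual tree is precisely the statement that the support of the lamination is minimal and filling.

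\emph{(ii) $\Rightarrow$ (i).} Assume $T$ is indecomposable — so minimal, with dense orbits and cyclic point stabilizers — and, if not free, dual to a minimal filling lamination on a once-punctured surface. Let $F$ be a non-trivial proper free factor and suppose $F$ reduces $T$. I would first rule out $F$ being elliptic: in the free case nothing non-trivial is elliptic, and in the surface case the point stabilizers are conjugates of the (non-primitive) boundary subgroup, into which a proper free factor cannot be conjugated. So $F$ acts with dense orbits on a non-degenerate invariant subtree, and passing to its minimal subtree $Y_F$ I may assume it $F$-minimal with $F$ non-cyclic. In the surface case no element of $F$ is elliptic in $T$, so $F$ acts freely on $Y_F$; but $Y_F$, being the restriction of a geometric tree of surface type to a subgroup, is again geometric with only surface and simplicial pieces, and a surface piece has elliptic boundary curves, incompatible with freeness — so $Y_F$ is simplicial, contradicting that it has dense orbits. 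In the free case I would instead use Guirardel's dichotomy: the $F_N$-translates of $Y_F$ cannot form a non-trivial transverse covering, so a translate meets $Y_F$ in a non-degenerate arc; combined with the control results and an index count this is impossible when $Y_F\subsetneq T$, while $Y_F=T$ would make a proper free factor act minimally with dense orbits on $T$, which an index/rank argument also forbids.

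\emph{Main obstacle.} I expect the heart of the matter to be the non-free half of (i)$\Rightarrow$(ii): producing a concrete surface model from an abstract indecomposable, non-free, very small tree. This requires the machinery of geometric trees — realizing $T$ via a band complex, using triviality of arc and tripod stabilizers to exclude Levitt (thin) and simplicial components, recognizing the single boundary curve as the unique conjugacy class of non-trivial point stabilizers, and ruling out extra punctures by arationality — and it is the step most sensitive to the hypotheses ``very small'' and ``once-punctured.'' A secondary difficulty is the free case of (ii)$\Rightarrow$(i), where one must exclude a proper free factor acting on a proper dense-orbit subtree whose translates overlap, or acting on all of $T$ with dense orbits.
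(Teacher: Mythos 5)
Your overall skeleton (reduce to dense orbits, then use the geometric/band-complex theory to pin down the non-free arational trees as once-punctured surface trees, and use index/ellipticity facts for the converse) is the right one, but two steps in (i)$\Rightarrow$(ii) have genuine gaps. First, the sentence ``running the same argument on the skeleton of a transverse covering promotes dense orbits to indecomposable'' hides most of the work. Failure of indecomposability (or of mixing) only produces a \emph{transverse family}, not a transverse covering: there are trees with dense orbits that admit transverse families but do not split as graphs of actions at all (the ``nesting'' example in the appendix), so there may be no skeleton to argue on. Converting ``not mixing'' into a reducing free factor is exactly the content of Proposition \ref{P.NotMixing}, whose proof needs geometric approximations, Proposition \ref{P.NonGeomFamily} (families meeting an arc in infinitely many translates), the ergodic-measure finiteness results, and the Rips machine together with Lemma 4.1 of \cite{BF94}. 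Moreover, even when a transverse covering exists, its skeleton need not be very small, so the simplicial argument you used for the dense-orbits step does not apply to it directly; in the paper the skeleton is shown to be very small only after identifying the vertex trees as surface type, so that attaching subgroups are the (cyclic) boundary curves. Second, your justification that ``since arc and tripod stabilizers are trivial, the theory of geometric trees forces $T$ to be dual to a lamination on a compact surface'' is not correct as stated: thin (Levitt) type band complexes, and non-geometric trees, also give trees with trivial arc and tripod stabilizers. What is actually needed is arationality: (a) to show $T$ is geometric at all (the paper resolves $T$ by a geometric tree in which a point stabilizer is elliptic and argues that a non-exact resolution with dense orbits would force non-trivial arc stabilizers, while a simplicial part of the resolution yields elliptic proper free factors), and (b) to exclude thin type, since a thin component produces point stabilizers contained in proper free factors. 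You flag this area as the main obstacle, but the reason you give for the surface conclusion is the wrong one.

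Two smaller points. Your Euler-characteristic argument in the dense-orbits step does not work: a very small simplicial $\FN$-tree can have every edge stabilizer infinite cyclic (splittings dual to curves on surfaces), so you cannot force an edge with trivial stabilizer that way; the correct tool is the unfolding statement (Corollary \ref{C.Simp}), which says every edge stabilizer, and all but at most one vertex stabilizer, is contained in a proper free factor, after which Corollary \ref{C.CarryToReduce} produces a reducing factor. In (ii)$\Rightarrow$(i), your claim that the minimal subtree of a finitely generated subgroup of a surface-type geometric tree is again geometric with only surface and simplicial pieces is unjustified; the clean route, which also covers the free case uniformly and avoids your ``index count'' sketch, is Proposition \ref{P.Indecomp}: for an indecomposable $T$, a finitely generated subgroup acts non-simplicially on its minimal subtree only if it has finite index, so a proper free factor can neither fix a point (ellipticity is ruled out as you say) nor act with dense orbits on a non-degenerate subtree.
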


The notion of indecomposability for trees was introduced in \cite{Gui08}; indecomposable trees in $\partial \cvn$ were studied in \cite{R10a} and \cite{CHR11}.  Associated to $T \in \partial \cvn$ is a (algebraic) lamination $L(T)$--a non-empty, closed, $\FN$-invariant, $\mathbb{Z}_2$-invariant subset of $\partial \FN \times \partial \FN \ssm \Delta.$; $L(T)$ encodes information about elements of $\FN$ with short translation length in $T$.  The structure of $L(T)$ is related to the structure of $T$; in \cite{CHR11} it was shown that if $T$ is free and indecomposable, then $L(T)$ is minimal up to adding finitely many $\FN$-orbits of diagonal leaves.  It follows from \cite{R10a} that if $T$ is free and indecomposable, then every sublamination of $L(T)$ is filling in the sense that it is not carried by any finitely generated subgroup of infinite index.  One sees that for $T$ dual a measured lamination on a once-punctured surface with minimal and filling support, then $L(T)$ contains a sublamination that is minimal and filling; in this case $L(T)$ is a symbolic coding for the corresponding singular foliation, and the referenced minimal and filling sublamination corresponds to the (coding of the) geodesic lamination.  

A \emph{current} is an $\FN$-invariant, $\mathbb{Z}_2$-invariant, Radon measure on $\partial \FN \times \partial \FN \ssm \Delta$; the support of a current is a lamination.  Kapovich and Lustig \cite{KL09a} constructed a continuous function $\langle \cdot , \cdot \rangle:\partial \cvn \times Curr_N \to \mathbb{R}$ and showed that $\langle T, \mu \rangle=0$ if and only if $Supp(\mu) \subseteq L(T)$\cite{KL10d}; here $Curr_N$ denotes the space of geodesic currents, equipped with the weak$^*$ topology.  The action of $Out(\FN)$ on the corresponding projective space $\mathbb{P}Curr_N$ is not minimal, but there is a unique minset $\mathbb{P}M_N \subseteq \mathbb{P}Curr_N$, where $Out(\FN)$ acts with dense orbits \cite{Kap06}.  Let $M_N$ denote the pre-image of $\mathbb{P}M_N$ in $Curr_N$.  We get the following ``unique duality'' result, which is useful for constructions; in particular, it is used in \cite{BR12}, where details of the proof are found, as part of an approach to give a description of the Gromov boundary of the complex of free factors of $\FN$:

\begin{cor}\cite{BR12}
 If a tree $T \in \partial \cvn$ is arational then the following holds: for any $T' \in \partial \cvn$ and any $\mu \in M_N$, if $\langle T, \mu \rangle =0=\langle T',\mu \rangle$, then $L(T)=L(T')$.  In particular, $T'$ is also arational.
\end{cor}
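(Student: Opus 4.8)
The plan is to pass from the vanishing of the intersection pairings to inclusions of laminations, to pin down $\operatorname{Supp}(\mu)$ using the strong structural constraints that arationality places on $L(T)$, and then to reverse the classification to force $T'$ to be arational as well.

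First I would invoke the Kapovich--Lustig theorem quoted above \cite{KL10d}: the hypotheses $\langle T,\mu\rangle=0$ and $\langle T',\mu\rangle=0$ become $\operatorname{Supp}(\mu)\subseteq L(T)$ and $\operatorname{Supp}(\mu)\subseteq L(T')$. In particular $\operatorname{Supp}(\mu)$ is a non-empty, closed, $\FN$-invariant sublamination of $L(T)$. Since $T$ is arational, Theorem~1.1 gives that $T$ is indecomposable, so by \cite{CHR11} and \cite{R10a} the lamination $L(T)$ has a unique minimal sublamination $L_0=L_0(T)$; moreover $L_0$ is filling, $L(T)$ contains no periodic leaf, and $L(T)$ is the diagonal extension of $L_0$, adding only finitely many $\FN$-orbits of (non-periodic) leaves, so that $L(T)$ is determined by $L_0$. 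In the non-free case $L_0$ is, as discussed above, the symbolic coding of the arational geodesic lamination on the once-punctured surface dual to $T$.

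Next I would identify $\operatorname{Supp}(\mu)$ with $L_0$. Write $D=L(T)\ssm L_0$: this is a countable set, and $\FN$ acts on it with infinite, accumulating orbits (the leaves in $D$ are non-periodic). A current is a Radon measure, hence locally finite, so it must assign zero mass to each such orbit; therefore $\mu(D)=0$, and since $L_0$ is closed no point of $D$ can lie in $\operatorname{Supp}(\mu)$. Thus $\operatorname{Supp}(\mu)\subseteq L_0$, and since $\mu\neq 0$ while $L_0$ is minimal, $\operatorname{Supp}(\mu)=L_0$. (Alternatively one may quote that the support of any current in $M_N$ is minimal and filling, cf.\ \cite{Kap06,KL09a}, which identifies $\operatorname{Supp}(\mu)$ with the unique minimal sublamination of $L(T)$ at once.)

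It remains to run the argument backwards, and this is the step I expect to be the main obstacle. We now know $L_0=\operatorname{Supp}(\mu)\subseteq L(T')$, so $L(T')$ contains a minimal, filling sublamination, and I would deduce that $T'$ must be arational --- this is where the control over the collection of reducing factors developed in this paper is essential. If $T'$ were not arational, some non-trivial proper free factor $F$ reduces $T'$, and tracing through the possibilities allowed by Theorem~1.1 --- $T'$ decomposable (handled by \cite{Gui08,R10a}), or $T'$ indecomposable but dual to a geodesic lamination on a surface with more than one puncture, or with non-minimal or non-filling support --- shows that $L(T')$ contains no filling sublamination, contradicting $L_0\subseteq L(T')$. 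Hence $T'$ is arational. Finally, applying Theorem~1.1 and \cite{CHR11,R10a} to $T'$, the lamination $L(T')$ is the diagonal extension of its own unique minimal sublamination $L_0(T')$; since $L_0\subseteq L(T')$ is minimal it cannot be absorbed into the finitely many diagonal orbits of $L(T')$, so $L_0\cap L_0(T')\neq\emptyset$ and hence $L_0=L_0(T')$ by minimality. Passing to diagonal extensions gives $L(T')=L(T)$.
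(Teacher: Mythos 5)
First, note that this paper does not actually prove the corollary; it explicitly defers the details to \cite{BR12}, so your argument can only be judged on its own merits. The outline you give (Kapovich--Lustig duality $\Rightarrow$ $Supp(\mu)\subseteq L(T)\cap L(T')$, then pin $Supp(\mu)$ down to the minimal filling piece of $L(T)$, then transfer to $T'$) is indeed the right skeleton, but two of your steps have genuine gaps. The first is where the hypothesis $\mu\in M_N$ must enter, and in your write-up it never really does. For $T$ arational of surface type, $L(T)$ does \emph{not} have a unique minimal sublamination: the boundary word $c$ of the once-punctured surface is elliptic in $T$, so the (closed, discrete) orbit of $(c^{-\infty},c^{+\infty})$ is a second minimal sublamination of $L(T)$, disjoint from the filling one $L_0$. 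Your atom argument correctly kills the non-periodic diagonal leaves but says nothing about these periodic boundary leaves, so a priori $\mu$ could be a multiple of the rational current $\eta_c$, and then $Supp(\mu)\subseteq L(T')$ gives no control on $L(T')$ whatsoever. Ruling this out is exactly what $\mu\in M_N$ is for, and it requires an actual argument that $\eta_c\notin M_N$ (or that supports of currents in $M_N$ cannot be a single periodic orbit of a non-primitive, filling element). Your parenthetical fallback --- that every current in $M_N$ has minimal and filling support --- is false: $\mathbb{P}M_N$ is the closure of the rational currents of primitive elements, and those have support carried by a cyclic free factor, hence as far from filling as possible.

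The second gap is the implication ``$L(T')$ contains a minimal filling sublamination $\Rightarrow$ $T'$ is arational (and $L(T')=L(T)$).'' You dispose of this by ``tracing through the possibilities allowed by Theorem 1.1,'' but Theorem 1.1 characterizes arational trees; it does not assert that a \emph{reducible} tree has no filling sublamination in its dual lamination. That statement is a substantive converse requiring the reducing-system machinery of this paper (roughly: if $F$ reduces $T'$, one must show every minimal sublamination of $L(T')$ has a leaf carried by a proper free factor, using Lemma \ref{L.GoodGraph}, the transverse-family structure, and Lemma \ref{L.Carry}), and it is one of the main points of the proof in \cite{BR12}. Relatedly, your final step ``passing to diagonal extensions gives $L(T')=L(T)$'' again ignores the boundary leaves: one must also rule out that one of $T,T'$ is free and the other surface type with the same minimal piece $L_0$, since the boundary leaf would then lie in one dual lamination but not the other. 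So the architecture is right, but the two places where the real work lives --- using $\mu\in M_N$ and proving the converse direction of arationality --- are asserted rather than proved.
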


To complement our characterization of arational trees, we establish some results giving control over the set of all non-trivial proper free factors that reduce a given tree.  Say that a factor $F \leq \FN$ \emph{dynamically reduces} $T$ if there is an $F$-invariant subtree $Y \subseteq T$ that contains more than one point such that the action of $F$ on $Y$ has dense orbits.  If $F$ reduces $T$ but does not dynamically reduce $T$, then we say that $F$ \emph{peripherally reduces} $T$; in this case, $F$ fixes a point of $T$.  Let $\mathcal{R}(T)$ denote the set of all factors reducing $T$, and let $\mathcal{D}(T)$ denote the set of factors dynamically reducing $T$.  For any $T \in \partial \cvn$, there is a simplicial tree $T' \in \partial \cvn$ such that any subgroup fixing a point in $T$ fixes a point in $T'$, so peripheral factors are understood in any case.  

\begin{thm}
 Let $T \in \partial \cvn$.  If $T$ is factor reducible, then there is a finite subset $\mathcal{C}(T)=\{F^1,\ldots,F^r\} \subseteq \mathcal{R}(T)$ such that:
    \begin{enumerate}
     \item [(a)] for any $F' \in \mathcal{R}(T)$, there is $g \in \FN$ such that $F^j \leq (F')^g$ for some $j$,
     \item [(b)] there is a simplicial tree $T' \in \partial \cvn$ such that any subgroup fixing a point in $T$ also fixes a point in $T'$ and such that some element of each $F^j$ fixes a point in $T'$, 
     \item [(c)] the action of $F^j$ on its minimal invariant subtree in $T$ is mixing (maybe trivial); further, for any $i \neq j$ and any $g \in \FN$, $F^i \cap (F^j)^g$ fixes a point in $T$.
    \end{enumerate}
\end{thm}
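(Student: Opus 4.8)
The plan is to extract $\mathcal{C}(T)$ from a canonical decomposition of $T$ and to feed its dynamical pieces into our characterization of arational trees. I would begin with Levitt's decomposition of the very small tree $T$: it presents $T$ as a graph of actions over a minimal simplicial $\FN$-tree $S$, the \emph{skeleton}, whose non-degenerate vertex actions have dense orbits. Since $T$ is very small, $S$ is very small and $\FN\backslash S$ is a finite graph, so there are only finitely many $\FN$-orbits of vertices; let $Y_1,\dots,Y_q$ be orbit representatives of the non-degenerate vertex trees, acted on with dense orbits by their stabilizers $G_1,\dots,G_q$.

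I would then let $\mathcal{C}(T)=\{F^1,\dots,F^r\}$ be conjugacy representatives of the free factors of $\FN$ that reduce $T$ and are \emph{minimal} with this property. Part (a) then amounts to two assertions: (i) every reducing factor contains a conjugate of a minimal one, and (ii) there are only finitely many minimal reducing factors up to conjugacy. For (i): a tree with dense orbits admits no equivariant surjection onto a non-degenerate simplicial tree, so if $F'$ dynamically reduces $T$ its reducing subtree maps to a point of $S$, necessarily a non-degenerate vertex; hence a conjugate of $F'$ lies in some $G_i$ and its reducing subtree lies in $Y_i$, and the same holds for every reducing factor contained in $F'$. Thus descending chains of dynamical reducers below $F'$ are confined inside $G_i \curvearrowright Y_i$, where the rank and corank of such a factor are bounded by the index estimates for very small trees; peripheral reducers are confined to vertex groups of $S$ and are handled by the parallel elliptic argument together with the simplicial model $T'$ for the point stabilizers of $T$ promised in the introduction. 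Assertion (ii) is the quantitative heart of the theorem: here I would combine the finitely many vertex orbits with the index bounds and with the near-minimality of $L(T)$ from \cite{R10a} and \cite{CHR11} --- every sublamination of $L(T)$ is either filling or carried by a finitely generated subgroup of infinite index, and such carriers form a controlled family --- to cut down to finitely many conjugacy classes. I expect (ii), i.e.\ the finiteness in (a), to be the main obstacle; matching ``minimal reducer'' against the finite combinatorial data of the decomposition is where essentially all of the work lies.

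With the list in hand, (a) is immediate. For the ``mixing (maybe trivial)'' clause of (c): a minimal peripheral reducer fixes a point of $T$ and so acts trivially on its minimal subtree. For a minimal dynamical reducer $F^j$ acting on its minimal subtree $Y^j \subseteq T$, I would argue that non-mixing would force a non-trivial decomposition of $F^j \curvearrowright Y^j$ whose pieces yield a proper free factor of $\FN$ contained in $F^j$ that reduces $T$ (the reducing subtree of such a piece is a fortiori a dense-orbit subtree of $T$), contradicting minimality --- so $F^j\curvearrowright Y^j$ is mixing, in fact indecomposable by our characterization of arational trees. Upgrading such a piece from a free factor of $F^j$ to a free factor of $\FN$ is the delicate point, and is precisely where that characterization is used. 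The disjointness clause of (c), that $F^i\cap(F^j)^g$ fixes a point of $T$ for $i\ne j$, follows from the graph-of-actions structure: the images of $Y^i$ and $gY^j$ in $S$ are single vertices, and were $F^i\cap(F^j)^g$ not elliptic it would act on $T$ with an axis or with dense orbits, forcing those vertex trees to coincide and $F^i,F^j$ to be conjugate. Finally, for (b): each peripheral $F^j$ already fixes a point of the simplicial model $T'$; for a dynamical $F^j$ I would refine $T'$ by blowing up a vertex along a splitting of its vertex group in which a conjugate of an element of $F^j$ is elliptic, choosing the new edge groups trivial or maximal cyclic so that the refinement remains in $\partial\cvn$ and still dominates the point stabilizers of $T$.
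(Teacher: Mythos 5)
Your argument is built on Levitt's decomposition with a non-trivial simplicial skeleton, but that decomposition (Lemma \ref{L.GoodGraph}) is only available when $T$ is not simplicial and does \emph{not} have dense orbits. In the essential case, where $T$ has dense orbits, the decomposition is trivial: $T$ is its own single ``vertex tree'' and there is no skeleton $S$, so the steps ``the reducing subtree maps to a vertex of $S$, hence a conjugate of $F'$ lies in some $G_i$'' and the disjointness argument for (c) via images of $Y^i, gY^j$ in $S$ say nothing. More seriously, the finiteness in (a), which you yourself flag as the heart of the theorem, is not proved: near-minimality of $L(T)$ from \cite{R10a, CHR11} is only available for free indecomposable trees (in particular not for the reducible trees at issue), and ``index bounds'' plus ``finitely many vertex orbits'' do not bound the number of conjugacy classes of minimal reducers. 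The paper's mechanism is different and does not use any skeleton: the minimal subtrees of the minimal dynamical factors form a transverse family $\mathscr{T}_{\mathcal{MD}}$ (Proposition \ref{P.MinTF}: an overlap in an arc forces, via Proposition \ref{P.Ergodic} and Lemma \ref{L.FactorProperties}, a common proper factor acting with dense orbits on its minimal tree, contradicting minimality --- this simultaneously gives the intersection clause of (c)), and any transverse family in a dense-orbits tree has at most $3N-4$ orbits by the ergodic-measure count (Corollary \ref{C.Finiteness1}, resting on Lemma \ref{L.Gui}). You never invoke either ingredient, so the finiteness remains an acknowledged gap.

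Part (b) has the same problem. ``Refine $T'$ by blowing up a vertex along a splitting of its vertex group in which an element of $F^j$ is elliptic'' presupposes a vertex-group structure that does not exist when $T$ has dense orbits, and the existence of a single very small simplicial tree simultaneously dominating all point stabilizers of $T$ and making some element of each $F^j$ elliptic is exactly the content of Theorem \ref{T.Control}; it is obtained there by a case analysis with no analogue in your sketch: projecting $T$ along invariant measures $\mu$ with $T\to T_{\mu}$ (Lemma \ref{L.Fibers}) when $\mathscr{T}_{\mathcal{MD}}$ misses an arc or a member is not free/arational, using sparse points and the free splitting of Lemma \ref{L.AtomsFreeSplit} when the residual is finite and non-empty, using geometric resolutions as in Proposition \ref{P.NonGeomFamily} when translates of a member meet an arc infinitely often, and using the skeleton of $\mathscr{T}_{\mathcal{MD}}$ only when it actually is a transverse covering. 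Your treatment of the mixing clause of (c) (non-mixing of $F^j\curvearrowright T_{F^j}$ would produce a smaller dynamical reducer, contradicting minimality, via Proposition \ref{P.NotMixing}, Lemma \ref{L.FactorFamilies} and Lemma \ref{L.FactorProperties}) is sound and matches the paper, but the two load-bearing steps --- finiteness and the construction of the controlling tree $T'$ --- are missing.
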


Conditions (a) and (b) give control over the set $\mathcal{R}(T)$.   In \cite{BR12} these conditions are used to show that there is a number $L$ such that for any tree $T \in \partial \cvn$, the diameter of $\mathcal{R}(T)$ has diameter at most $L$ in the complex of free factors for $\FN$; thanks to the theorem, we just need to understand the free factors containing an elliptic element of $T'$, which is achieved using an argument with Whitehead's algorithm.  Conditions (a) and (c) combine to give that the collection $F^1, \ldots, F^r$ is canonical in some sense:

\begin{cor}
 Let $T$ and $\mathcal{C}(T)$ as in the Theorem, and let $\Phi \in Out(\FN)$.  If $T\Phi$ is equivariantly bi-Lipschitz equivalent to $T$, then $\Phi$ preserves the set of conjugacy classes $\{[F^i]|F^i \in \mathcal{C}(T)\}$.  
\end{cor}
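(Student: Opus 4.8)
The plan is to exploit uniqueness (up to conjugacy) of the canonical collection $\mathcal{C}(T)$ among all finite subsets of $\mathcal{R}(T)$ satisfying the conclusions of the Theorem. First I would observe that if $T\Phi$ is equivariantly bi-Lipschitz equivalent to $T$, then $\mathcal{R}(T\Phi)=\mathcal{R}(T)$ as subsets of the free factors of $\FN$: a factor $F$ reduces $T$ precisely when $F$ acts on a subtree of $T$ with a dense orbit, and a bi-Lipschitz equivalence carries invariant subtrees with dense orbits to invariant subtrees with dense orbits, so the property of reducing is an invariant of the bi-Lipschitz equivalence class. (Similarly $\mathcal{D}(T\Phi)=\mathcal{D}(T)$, and ``mixing on the minimal subtree'' is preserved, since mixing is detected by the metric structure up to bi-Lipschitz.) Now, for any $\Phi\in Out(\FN)$, the set $\Phi\cdot\mathcal{C}(T)$ — meaning a choice of representatives for the conjugacy classes $\Phi[F^i]$ — is a finite subset of $\mathcal{R}(T\Phi)$; I would check directly that it satisfies conditions (a), (b), (c) for $T\Phi$, since $\Phi$ is an automorphism and those conditions are stated purely in terms of the containment/conjugacy/intersection structure of factors together with point-stabilizers and minimal subtrees in the tree, all of which transport along $\Phi$.

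Next I would prove the key uniqueness lemma: if $\mathcal{C}=\{F^1,\dots,F^r\}$ and $\mathcal{C}'=\{G^1,\dots,G^s\}$ are two finite subsets of $\mathcal{R}(T)$ each satisfying (a) and (c) of the Theorem, then $r=s$ and, after reindexing, $F^i$ is conjugate to $G^i$ for all $i$. To see this, fix $F^i\in\mathcal{C}$. Condition (a) applied to $\mathcal{C}'$ (with $F'=F^i\in\mathcal{R}(T)$) yields some $G^j$ and some $g$ with $G^j\leq (F^i)^g$. Symmetrically, condition (a) applied to $\mathcal{C}$ (with $F'=G^j$) yields some $F^k$ and some $h$ with $F^k\leq(G^j)^h$. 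Chaining these gives $F^k\leq (F^i)^{gh}$, an inclusion of one element of $\mathcal{C}$ into a conjugate of another. If $k\neq i$, this contradicts condition (c): $F^k\cap (F^i)^{gh}=F^k$ would be a factor fixing a point in $T$, hence not reducing $T$ — but $F^k\in\mathcal{R}(T)$, and reducing factors are non-trivial and do not fix a point when they dynamically reduce; for the peripheral case one uses that (c) forces these intersections to be point-stabilizers while $F^k$ itself, being in the canonical collection, is assumed non-peripheral/mixing on its minimal subtree, so $F^k$ cannot equal such an intersection unless it is trivial, which it is not. Hence $k=i$, so $F^i\leq (F^i)^{gh}$; since free factors have no proper self-conjugations inside $\FN$ of this form (a finite-rank factor properly contained in a conjugate of itself is impossible, as conjugation preserves rank and the ambient structure), we get $F^i=(F^i)^{gh}$, and therefore $G^j\leq(F^i)^g$ together with the symmetric inclusion forces $G^j$ conjugate to $F^i$. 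This sets up a bijection $\mathcal{C}\leftrightarrow\mathcal{C}'$ matching conjugacy classes.

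Finally I would combine the two ingredients. Apply the uniqueness lemma to $T$ with $\mathcal{C}=\mathcal{C}(T)$ and $\mathcal{C}'=\Phi^{-1}\cdot\mathcal{C}(T\Phi)$ — both are finite subsets of $\mathcal{R}(T)=\mathcal{R}(T\Phi)$ satisfying (a) and (c) for $T$ (for $\mathcal{C}'$ this is exactly the transport-of-structure observation from the first paragraph, run backwards through $\Phi^{-1}$). The lemma gives that $\mathcal{C}(T)$ and $\Phi^{-1}\cdot\mathcal{C}(T\Phi)$ have the same set of conjugacy classes, i.e. $\{[F^i]\}=\Phi^{-1}\{[G^j]\}$. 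But by the transport-of-structure observation in the first paragraph, $\mathcal{C}(T\Phi)$ may be taken to be $\Phi\cdot\mathcal{C}(T)$ up to the same uniqueness, so $\{[G^j]\}=\Phi\{[F^i]\}$ as well; either way one concludes $\Phi\{[F^i]\}=\{[F^i]\}$, which is the claim. The main obstacle I anticipate is the careful handling of the peripheral factors in the uniqueness lemma — one must be sure that condition (c)'s assertion that $F^i\cap(F^j)^g$ fixes a point genuinely precludes the chained inclusion $F^k\leq(F^i)^{gh}$ with $k\neq i$, which requires knowing that each $F^k\in\mathcal{C}(T)$ is itself not a proper point-stabilizer-type subgroup; this should follow from the construction of $\mathcal{C}(T)$ in the Theorem (where each $F^j$ is mixing, possibly trivial, on its minimal subtree) but needs to be stated cleanly, perhaps by recording as a remark after the Theorem that the $F^j$ may be chosen to be exactly the dynamically-reducing ``components,'' so that none is conjugate into another.
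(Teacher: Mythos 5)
Your uniqueness lemma has a genuine gap at exactly the point you flag, and it is more serious than needing a ``clean statement.'' Chaining $F^k \leq (G^j)^h \leq (F^i)^{gh}$ and invoking (c) for $k \neq i$ only tells you that $F^k = F^k \cap (F^i)^{gh}$ fixes a point of $T$, i.e.\ that $F^k$ is peripheral. That is perfectly consistent with the Theorem, since (c) explicitly allows each $F^j$ to be ``mixing (maybe trivial),'' and peripheral members cannot be excised from $\mathcal{C}(T)$: if $\mathcal{R}(T)$ contains a peripheral factor $F'$, then (a) forces some $F^j \leq (F')^g$, and such an $F^j$ itself fixes a point, hence is peripheral. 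In fact conditions (a), (b), (c) as stated do not pin $\mathcal{C}(T)$ down up to conjugacy. Take any valid $\mathcal{C}(T)$ and a non-minimal peripheral reducing factor $G$ containing a conjugate of some $F^i$; then $\mathcal{C}(T)\cup\{G\}$ still satisfies (a) trivially, (b) because $G$ fixes a point of $T$ and hence of $T'$, and (c) because any intersection with a conjugate of $G$ lies inside that conjugate, which fixes a point. So the axiomatic uniqueness you want is false, and your proposed patch (that the $F^j$ are ``exactly the dynamically-reducing components so that none is conjugate into another'') is not available, for the reason just given.

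What actually closes the gap -- and is what the paper does -- is to route through the construction rather than the axioms. The dynamical members of $\mathcal{C}(T)$ are the stabilizers of orbit representatives of the transverse family $\mathscr{T}_{\mathcal{MD}}$ of Proposition \ref{P.MinTF}; transversality together with Corollary \ref{C.FactorStab} rules out $F^i \leq (F^j)^g$ for distinct dynamical $F^i, F^j$, and Proposition \ref{P.CanonPer} supplies a separate canonical choice in the peripheral case. Propositions \ref{P.CanonDyn} and \ref{P.CanonPer} then show directly that a bi-Lipschitz equivariant self-map $f$ of $T$ representing $\alpha \in Aut(\FN)$ satisfies $f(\mathscr{T}_{\mathcal{MD}}) = \mathscr{T}_{\mathcal{MD}}$ -- the image is again a transverse family of minimal dynamical subtrees -- and permutes the canonical peripheral factors. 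The Corollary follows because the equivariant bi-Lipschitz equivalence between $T\Phi$ and $T$ produces precisely such an $f$ representing a lift of $\Phi^{\pm 1}$. Your transport-of-structure observations in the first paragraph are sound and do appear in the paper's argument; the gap is entirely in the attempt to replace the construction by an abstract uniqueness statement that (a), (b), (c) cannot support.
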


Note that if $T\Phi$ is equivariantly bi-Lipschitz equivalent to $T$, then $\Phi$ certainly preserves $L(T)$; hence, we recover the following variant of a celebrated result of Bestvina-Handel \cite{BH92}, see also \cite{BFH97}:

\begin{thm}
 Let $\Phi \in Out(\FN)$.  One of the following holds:
  \begin{enumerate}
   \item [(i)] $\Phi$ is finite order,
   \item [(ii)] there is $k$, such that $\Phi^k$ fixes a conjugacy class of non-trivial proper free factors of $\FN$, or
   \item [(iii)] $\Phi$ preserves a minimal and filling lamination.
  \end{enumerate}

\end{thm}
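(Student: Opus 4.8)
The plan is to extract from $\Phi$ a single tree $T\in\partial\cvn$ such that $T\Phi$ is equivariantly bi-Lipschitz equivalent to $T$, and then to run the dichotomy ``$T$ is factor reducible'' versus ``$T$ is arational'', applying in the first case the Corollary preceding this statement and in the second case the characterization of arational trees (Theorem~1.1) together with the structure of the dual lamination $L(T)$.

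First I would dispose of (i): we may assume that $\Phi$ has infinite order, since a finite-order $\Phi$ is exactly case (i). Then $\Phi$ induces a homeomorphism of the compact space $\mathbb{P}\barcvn$ (projectivized, compactified Outer space) \cite{CL95}, and I would invoke the standard fact that this homeomorphism has a fixed point $[T]$; thus $T\Phi = \lambda T$ for some $\lambda > 0$. Since the $Out(\FN)$-stabilizer of a point of $\mathbb{P}\cvn$ is finite and $\Phi$ has infinite order, the class $[T]$ cannot lie in $\mathbb{P}\cvn$, so $T\in\partial\cvn$. Finally, the homothety $T\to\lambda T$ is an $\FN$-equivariant similarity, hence an equivariant bi-Lipschitz equivalence, so $T\Phi = \lambda T$ is equivariantly bi-Lipschitz equivalent to $T$, as required.

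Now I would split on whether $T$ is factor reducible. If it is, then $\mathcal{C}(T) = \{F^1,\ldots,F^r\}$ is defined and non-empty, and the Corollary preceding this statement gives that $\Phi$ permutes the finite set $\{[F^i] : 1\le i\le r\}$ of conjugacy classes of non-trivial proper free factors of $\FN$; taking $k$ to be a common multiple of the cycle lengths of this permutation, $\Phi^k$ fixes $[F^1]$, so (ii) holds. If instead $T$ is not factor reducible, then $T$ is arational, and by the remark preceding this statement $\Phi$ preserves $L(T)$. By Theorem~1.1, $T$ is indecomposable, and when $T$ is not free it is dual to a measured geodesic lamination on a once-punctured surface with minimal and filling support. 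In the free case $L(T)$ is minimal up to finitely many $\FN$-orbits of diagonal leaves \cite{CHR11} and each of its sublaminations is filling \cite{R10a}; in the remaining case $L(T)$ contains a minimal and filling sublamination, namely the coding of the geodesic lamination. In either case $L(T)$ has a unique minimal sublamination $\Lambda$, which is therefore minimal and filling and is canonically determined by $L(T)$; since $\Phi$ preserves $L(T)$, it preserves $\Lambda$, and (iii) holds.

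I expect two points to need the most care. The softer one is the existence of a fixed point for $\Phi$ on $\mathbb{P}\barcvn$: this is standard, following from compactness of $\barcvn$ \cite{CL95} together with a fixed-point argument, but it is the one ingredient external to this paper. The more substantive one lies in the arational case: one must verify that ``$L(T)$ minimal up to finitely many diagonal leaves'' really pins down a \emph{unique} minimal sublamination, so that it is automatically preserved by any automorphism stabilizing $L(T)$, and that this sublamination is filling. Here the structure theory of \cite{CHR11} and \cite{R10a} does the work in the free case, and the theory of arational measured foliations on the once-punctured surface does it in the other.
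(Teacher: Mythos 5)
Your overall plan---produce a tree $T \in \partial \cvn$ with $T\Phi$ equivariantly bi-Lipschitz equivalent to $T$, then split according to whether $T$ is factor reducible (use the Corollary on $\mathcal{C}(T)$ to get a periodic conjugacy class of factors) or arational (use the remark that $\Phi$ preserves $L(T)$, then extract the unique minimal sublamination, which is filling)---is the paper's plan, and the second half of your argument is sound; your attention to the fact that ``minimal and filling'' really means the canonical minimal sublamination of $L(T)$ is exactly the right point to address.

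The genuine gap is the first step. You assert as ``standard'' that the homeomorphism of $\mathbb{P}\barcvn$ induced by $\Phi$ has a fixed point, ``following from compactness \dots together with a fixed-point argument.'' Compactness is not enough: there are fixed-point-free homeomorphisms even of compact, contractible, finite-dimensional spaces (Kinoshita's example), and $\barcvn$---unlike the Thurston compactification of Teichm\"uller space---is not known to be a ball or even an ANR, so neither Brouwer nor Lefschetz applies. The fact that every $\Phi \in Out(\FN)$ projectively fixes a very small tree is a genuine theorem (due to Lustig, or deducible from the relative train track machinery), i.e.\ precisely the kind of heavy external input this paper is explicitly trying to avoid. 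The paper sidesteps the issue: it takes $T$ to be a subsequential limit of the orbit $[T_0\phi^n]$ in the compactified space (only compactness is used), and the structure it gets is not a homothety $T\Phi = \lambda T$ but merely a bi-Lipschitz map $f:T \to T$ with $f(gx)=\phi(g)f(x)$, following Bestvina and Sela; the Propositions on characteristic dynamical and peripheral factors are formulated for exactly such an $f$ and play the role your appeal to the Corollary plays, while the arational case yields the lamination as you describe (with the extra remark, needed because the limit tree is not unique, that one limit tree is arational if and only if all are). So either replace your first step by this limit-tree construction, or cite the fixed-point theorem in $\barcvn$ explicitly and acknowledge it as substantial external input rather than a consequence of compactness; as written, the step does not stand.
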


\section{Basic Notions}

A metric space $(T,d)$ is called a \emph{tree} if for any $x,y \in T$, there is a unique topological arc $[x,y]$ connecting $x$ to $y$ and $[x,y]$ is isometric to $[0,d(x,y)] \subseteq \mathbb{R}$; every tree gets the metric topology.  If $x \neq y$, we call $[x,y]$ an \emph{arc}; use the term \emph{degenerate arc} to mean a point.  A tree is called \emph{finite} if it is a finite union of arcs.  The convex hull of three points in $T$ is called a \emph{tripod} if it is not an arc.

We consider trees that carry a (left) isometric action of a group $G$, which will always be a free group; this means that we have homomorphism $\rho:G \to Isom(T)$.  As usual, the representation is suppressed, and $g \in G$ is identified with the isometry $\rho(g)$.  A tree $T$ equipped with an isometric action of $G$ is called an $G$-\emph{tree}, and we sometimes denote this situation by $G \curvearrowright T$.  Two $G$-trees $T$ and $T'$ are identified if there is an equivariant isometry $T \to T'$.

For an element $g \in Isom(T)$, we set $l(g)=l_T(g):=\inf_{x \in T}d(gx,x)$ to be the \emph{translation length} of $g$; there are two sorts of isometries of $T$--$g$ is \emph{hyperbolic} if $l(g)>0$, and $g$ is \emph{elliptic} otherwise.  If $g$ is hyperbolic, then there is an isometrically embedded copy of $\mathbb{R}$ in $T$, denoted $A(g)$, and called the \emph{axis} of $g$, on which $g$ acts as a translation by distance $l(g)$.  If $g$ is elliptic, then $g$ fixes a point in $T$, and we let $A(g)$ stand for the fixed point set of $g$.  

Let $T$ be a $G$-tree, and let $H \leq G$ be finitely generated.  If $H$ contains a hyperbolic element, then there is a unique minimal $H$-invariant subtree $T_H \subseteq T$; $T_H$ is the union of all axes of hyperbolic elements of $H$.  In general, $T$ is called \emph{minimal} if there is no proper subtree $T' \subsetneq T$ that is $G$-invariant; if $G$ is finitely generated, minimality amounts to $T_G=T$.  For $A \subseteq T$, we use $Stab(A)$ to denote the (setwise) \emph{stabilizer} of $A$, \emph{i.e.} $Stab(A)=\{g \in G|gA=a\}$.

A subtree $K \subseteq T$ is called a \emph{supporting subtree} if for any arc $I \subseteq T$, there are $g_1,\ldots, g_k \in G$ such that $I \subseteq g_1K \cup \ldots \cup g_kK$.  If $G$ is finitely generated and if $T$ is minimal, then $T$ always contains a finite supporting subtree; for example, let $x \in T$ be any point, and let $K$ be the convex hull in $T$ of the set containing $x$ and its images under a symmetric generating set for $G$.  

Let $\FN$ denote the rank $N$ free group.  A minimal $\FN$-tree $T$ is \emph{very small} if for any arc $I \subseteq T$, $Stab(I)$ either is trivial or is a maximal cyclic subgroup of $\FN$ and if the stabilizer of any tripod is trivial.  In this paper, we only consider very small trees that are not free and simplicial; let $\partial \cvn$ denote this subspace of such very small $\FN$-trees; identifying elements of $\partial \cvn$ that are equivariantly homothetic gives the boundary of the Culler-Vogtmann Outer space \cite{CL95, BF94}.  Most of the time, we will be concerned with very small trees with \emph{dense orbits}; this means that some (equivalently every) orbit of $T$ is dense in $T$.  If $T$ is very small and has dense orbits, then arc stabilizers are trivial \cite{LL03}.  In this case, for any finitely generated $H \leq \FN$, there is a unique minimal $H$-invariant subtree, which we denote by $T_H$; if $H$ contains a hyperbolic element, $T_H$ is defined above, else $T_H$ is the unique fixed point of $H$.  Note that the action of a group on a point has dense orbits.  

Let $\partial^2 \FN$ be defined as $\partial^2 \FN:= \partial \FN \times \partial \FN \ssm \Delta$, where $\partial \FN$ is the Gromov boundary, with the usual topology, of $\FN$ and where $\Delta$ denotes the diagonal; $\partial^2 \FN$ gets the induced topology.  The action of $\FN$ on $\partial \FN$ gives an action on $\partial^2 \FN$; we also have the involution $\iota$ exchanging the factors. A \emph{lamination} is a non-empty, closed, $\FN$-invariant, $\iota$-invariant subset $L \subseteq \partial^2 \FN$; the elements of a lamination are called \emph{leaves}.  Laminations generalize symbolic codings of foliations on surfaces.  Associated to $T \in \partial \cvn$ is a lamination $L(T)$, which is defined as follows: put $L_{\epsilon}(T):=\{(g^{-\infty},g^{\infty})|l(g) < \epsilon\}$, so $L_{\epsilon}(T)$ is a lamination; and set $L(T):=\cap_{\epsilon}L_{\epsilon}(T)$.  

If $H \leq \FN$ is finitely generated, then M. Hall's theorem gives that $H$ is a virtual retract, hence $H$ is quasi-convex in $\FN$; $\partial^2 H$ embeds in $\partial^2 \FN$.  Say that $H$ \emph{carries} a leaf of $L(T)$ if there is $l \in L(T)$ such that $l \in \partial^2 H$.  The folowing is an easy exercise:

\begin{lemma}\label{L.Carry}
 Let $T \in \partial \cvn$, and let $H \leq \FN$ be finitely generated.  Then $H$ carries a leaf of $L(T)$ if and only if one of the following occurs:
\begin{enumerate}
 \item [(i)] some non-trivial element of $H$ fixes a point in $T$, or
 \item [(ii)] the action $H \curvearrowright T_H$ is not discrete.
\end{enumerate}
\end{lemma}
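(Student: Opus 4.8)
The two basic facts I would use are: since $T_H$ is an $H$--invariant subtree of $T$ carrying the induced metric, one has $l_{T_H}(h)=l_T(h)$ for every $h\in H$ (a standard consequence of $H$--invariance of $T_H$); and since $H$ is quasi-convex in $\FN$, the set $\partial H$ is closed in the compact space $\partial\FN$, so $\partial^2 H=(\partial H\times\partial H)\cap\partial^2\FN$ embeds as a closed subset of $\partial^2\FN$, and every subset of $\partial^2 H$ on which the Gromov product of $H$ is bounded is compact. The plan is then to deduce each of (i) and (ii) implies ``$H$ carries a leaf'' directly from the definitions, and to prove the converse by analysing the limit map of $T$ on $\partial H$.

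Suppose first that some non-trivial $h\in H$ fixes a point of $T$. Then $l_T(h)=0$, so $(h^{-\infty},h^{\infty})$ lies in every $L_\epsilon(T)$, hence in $L(T)$; since $(h^{-\infty},h^{\infty})\in\partial^2 H$, the factor $H$ carries this leaf. Suppose instead that $H\curvearrowright T_H$ is not discrete; we may assume no non-trivial element of $H$ fixes a point of $T$, as otherwise we are in the previous case. Then some $H$--orbit in $T_H$ is not discrete, so there are elements $h_n\in H\ssm\{1\}$ with $l_T(h_n)=l_{T_H}(h_n)\to 0$, and these are hyperbolic (in $T$ and in $\FN$). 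Conjugating each $h_n$ within $H$ so that it is cyclically reduced in a fixed basis of the free group $H$ --- which alters neither $l_T(h_n)$ nor the fact that $(h_n^{-\infty},h_n^{\infty})\in\partial^2 H$ --- we may assume that the pairs $(h_n^{-\infty},h_n^{\infty})$ have uniformly bounded Gromov product in $H$, hence lie in a fixed compact subset of $\partial^2 H$. Passing to a convergent subsequence gives $(h_n^{-\infty},h_n^{\infty})\to(\xi,\eta)\in\partial^2 H$, and since $l_T(h_n)<\epsilon$ for all large $n$ and $L_\epsilon(T)$ is closed, $(\xi,\eta)\in L_\epsilon(T)$ for every $\epsilon$, so $(\xi,\eta)\in L(T)$. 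Thus $H$ carries the leaf $(\xi,\eta)$.

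For the converse I would argue the contrapositive: assume no non-trivial element of $H$ is elliptic in $T$ and $H\curvearrowright T_H$ is discrete, and show $L(T)\cap\partial^2 H=\emptyset$. Under these hypotheses $H$ acts freely, minimally and simplicially --- hence cocompactly --- on $T_H$ (a minimal free simplicial action of a finitely generated group has finite quotient graph), so $T_H$ is locally finite, the orbit map $H\to T_H$ is a quasi-isometry, and it induces a homeomorphism $\partial H\xrightarrow{\sim}\partial T_H$; moreover $\partial T_H$ maps into $\partial\widehat T$, disjoint from $\widehat T$ (the metric completion of $T$), because $T_H\hookrightarrow T$ is isometric. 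I would then invoke the description of $L(T)$ by the limit map $\CQ=\CQ_T\colon\partial\FN\to\widehat T\cup\partial\widehat T$ of Levitt--Lustig and Coulbois--Hilion--Lustig: a leaf $(\xi,\eta)$ of $L(T)$ satisfies $\CQ(\xi)=\CQ(\eta)$, with common value lying in $\widehat T$. Since $\CQ(\xi)$ is the limit in $\widehat T\cup\partial\widehat T$ of $t_nx_0$ for any sequence $t_n\to\xi$, and since for $\xi\in\partial H$ one may take $t_n\in H$ and $x_0\in T_H$, the restriction $\CQ|_{\partial H}$ is computed inside $T_H$: it equals the composite $\partial H\xrightarrow{\sim}\partial T_H\hookrightarrow\partial\widehat T$, which is injective and takes values outside $\widehat T$. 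Hence no pair of distinct points of $\partial H$ can be a leaf of $L(T)$. For a general $T\in\partial\cvn$ (not of dense-orbit type) one reduces to this case by writing $T$ as a graph of actions over dense-orbit vertex trees and applying the above to the vertex pieces, observing that $H$ still acts freely and simplicially on each relevant piece.

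The main obstacle is this last direction, and within it the compatibility statement $\CQ_T|_{\partial H}=\big(\partial H\xrightarrow{\sim}\partial T_H\hookrightarrow\partial\widehat T\big)$ --- that the limit map of $T$, restricted to the boundary of the quasi-convex subgroup $H$, agrees with the limit map computed inside $T_H$ --- together with the routine but slightly delicate reduction of the non-dense-orbit case to the dense-orbit case through the graph-of-actions decomposition. Both halves of the forward implication, by contrast, are elementary.
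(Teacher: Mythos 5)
The paper states this lemma without proof (it is left as ``an easy exercise''), so there is nothing to compare against; judged on its own terms, your forward implication is correct and complete, and your contrapositive argument is the right strategy for trees with dense orbits. Within that case, however, the justification of the key compatibility step is false as stated: it is not true that $\CQ(\xi)$ is the limit of $t_nx_0$ for an \emph{arbitrary} sequence $t_n\to\xi$. If $\CQ(\xi)$ lies in the metric completion $\overline{T}$, take $t_n=\xi_nw_n$ where $\xi_n$ is the length-$n$ prefix of $\xi$ and $w_n$ has bounded cancellation with $\xi_n$ but $d(x_0,w_nx_0)$ large; then $t_n\to\xi$ while $t_nx_0$ stays far from $\CQ(\xi)$. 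What you actually need is both weaker and provable from the quasi-convexity you already invoke: each prefix $\xi_k$ of $\xi\in\partial H$ can be written $\xi_k=h_ku_k$ with $h_k\in H$ and $|u_k|$ uniformly bounded, so $d(\xi_kx_0,h_kx_0)$ is uniformly bounded; since $H$ acts freely, simplicially and cocompactly (hence properly) on $T_H$ and $T_H\hookrightarrow T$ is isometric, $d(x_0,h_kx_0)\to\infty$, so the prefix orbit $\xi_kx_0$ is unbounded and therefore $\CQ(\xi)\in\partial\overline{T}$, which already rules out a leaf in $\partial^2H$. So this gap is repairable, but the sentence you lean on would not survive scrutiny.

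The genuine gap is the general case. The identification of $L(T)$ with $\{(\xi,\eta):\CQ(\xi)=\CQ(\eta)\in\overline{T}\}$ is available only for trees with dense orbits, and the lemma is asserted for every $T\in\partial\cvn$, including simplicial trees and trees with a nontrivial simplicial part. Your one-sentence reduction (``apply the above to the vertex pieces'') does not cover what is new there: for such $T$, $L(T)$ contains the pairs $(g^{-\infty},g^{\infty})$ for $g$ elliptic, together with \emph{all limits} of such pairs and of translated vertex-tree leaves, and a limit leaf in $\partial^2H$ need not lie in $\partial^2$ of any vertex or point stabilizer, so the dense-orbit argument applied piecewise never sees it. Excluding these leaves under your hypotheses is a different argument; for instance, if $(\xi,\eta)\in\partial^2H$ is approximated by pairs of fixed ends of elliptic elements, then quasi-convexity places infinitely many points of a line of the form $w\{c^k\}$ ($c$ elliptic) in a bounded neighborhood of $H$, and a pigeonhole on the bounded correction terms yields $uc^mu^{-1}\in H$, a non-trivial elliptic element of $H$ --- i.e.\ one lands in alternative (i), not (ii), by an argument not present in your sketch. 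One must also check that the subgroups $H\cap g\Stab(Y_v)g^{-1}$ to which the dense-orbit case is to be applied are finitely generated (Howson) and inherit freeness and discreteness, and deal with the diagonal-type leaves produced by taking closures. As written, your proof establishes the lemma only for $T$ with dense orbits; the passage to general $T$ is a real missing step rather than a routine observation.
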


Condition (ii) is, of course, more interesting, and is the focus of this paper.

\section{Transverse Families}

Let $T$ be an $\FN$-tree.  An $\FN$-invariant collection $\mathscr{Y}=\{Y_v\}_{v \in V}$ of non-degenerate subtrees $Y_v \subseteq T$ is called a \emph{transverse family} if whenever $Y_v \neq Y_{v'}$, one has that $Y_v \cap Y_{v'}$ contains at most one point.  Note that given a transverse family $\mathscr{Y}$, one has that the collection of closures $\overline{\mathscr{Y}}=\{\overline{Y}_v\}_{v \in V}$ is also a transverse family.  If $\mathscr{Y}$ is a transverse family in $T$ and if, additionally, $\mathscr{Y}=\overline{\mathscr{Y}}$ and each finite arc $I \subseteq T$ can be covered by finitely many elements of $\mathscr{Y}$, then $\mathscr{Y}$ is a \emph{transverse covering} of $T$, as defined in \cite{Gui08}.  We call the element of a transverse covering \emph{vertex trees} or \emph{vertex actions}, and when $T$ has a transverse covering, we say that $T$ \emph{splits} or that $T$ is a \emph{graph of actions}.

A transverse family in a tree $T$ should be thought of as a reduction of $T$.  Here is a motivating example (also see the Appendix): let $S$ be a surface, where we have an identification $\pi_1(S)=\FN$, and suppose that $S$ is equipped with a measured lamination $(L,\mu)$, where $\mu$ is assumed to have full support.  Set $T=T_{(L,\mu)}$ to be the $\FN$-tree dual to the measured lamination $(L,\mu)$; see \cite{MS84, BF95}.  If $L$ is not minimal, then the set of subtrees of $T$ corresponding to a particular minimal component is a transverse family in $T$; further, the collection of all subtrees of $T$ that come from a minimal component of $L$ is a transverse covering of $T$.  This is the intuition: transverse families are a generalization of sublaminations, and the subsurfaces supporting them, of measured laminations.  

Let $\mathscr{Y}$ be a transverse covering of $T$; associated to $\mathscr{Y}$ is a (minimal) simplicial $\FN$-tree $S$, called the \emph{skeleton} of $\mathscr{Y}$, which is defined as follows \cite{Gui08}.  The vertex set $V(S)$ of $S$ is the union of $\mathscr{Y}$ with the set of intersection points $x \in Y \cap Y'$, called \emph{attaching points}, for $Y \neq Y' \in \mathscr{Y}$; as $\mathscr{Y}$ is a transverse family, if $Y \cap Y' \neq \emptyset$, then there is exactly one point in the intersection if $Y \neq Y'$.   Connect $x \in V(S)$ to $Y \in V(S)$ by an edge if and only if $x \in Y$.  The space $S$ carries an obvious $\FN$-action, and the definition of $S$ allows for an easy check that $S$ is indeed a tree; minimality of $S$ follows from minimality of $T$; see \cite{Gui08} or the Appendix for details.  

Note that edge stabilizers in $S$ are of the form $Stab_Y(x)=Stab(x) \cap Stab(Y)$, where $x \in Y$ is an attaching point.  In particular, $S$ contains an edge with cyclic (resp. trivial) stabilizer if and only if there is $Y \in \mathscr{Y}$ and an attaching point $x \in Y$ such that $Stab_Y(x)$ is cyclic (resp. trivial); this is always satisfied when $T$ has cyclic point stabilizers (resp. $T$ is free), but can happen in more general situations (see Appendix).  

The simplicial trees $S$ that arise need not be very small (see Appendix).  If $T$ has a transverse covering whose skeleton is very small, then we say that $T$ has a \emph{very small splitting}; and if $T$ has a transverse covering whose skeleton contains an edge with trivial stabilizer, then we say that $T$ has a \emph{free splitting}.  We immediately have the following:

\begin{lemma}
 Let $T \in \partial \cvn$ have transverse covering $\mathscr{Y}$ with skeleton $S$.  If $S$ is a free splitting of $\FN$, then for any $Y \in \mathscr{Y}$, $Stab(Y)$ is contained in a proper free factor of $\FN$.  
\end{lemma}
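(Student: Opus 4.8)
The plan is to use the skeleton $S$ as a Bass--Serre tree and extract from it a free splitting of $\FN$ that exhibits $\Stab(Y)$ as a subgroup of a vertex group of a graph-of-groups decomposition with at least one trivial edge group. First I would recall that since $S$ is a free splitting, there is an edge $e \subseteq S$ with $\Stab(e)=\{1\}$; collapsing all edge orbits of $S$ other than the orbit of $e$ produces a one-edge free splitting $\bar S$ of $\FN$, i.e. $\FN$ acts minimally and without inversions on $\bar S$ with a single orbit of edges and trivial edge stabilizer. By Bass--Serre theory, $\bar S$ corresponds either to a free-product decomposition $\FN = A \ast B$ (if $\bar S/\FN$ has two vertices) or to an HNN extension $\FN = A \ast$ (if $\bar S/\FN$ has one vertex); in the first case $A$ and $B$ are proper free factors, and in the second case $A$ is a proper free factor and $\FN = A \ast \Z$.

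Next I would locate $\Stab(Y)$ inside a vertex group. Each vertex tree $Y \in \mathscr Y$ is (by definition of the skeleton) a vertex of $S$, and $\Stab(Y)$ in the $\FN$-action on $S$ fixes that vertex; after the collapse, $Y$ maps to some vertex $w$ of $\bar S$, and $\Stab(Y) \leq \Stab_{\bar S}(w)$. The stabilizer of a vertex of $\bar S$ is conjugate to one of the vertex groups of the corresponding decomposition, hence is conjugate into a proper free factor of $\FN$ (in the HNN case one must note that the single vertex group $A$ is itself a proper free factor since $\FN = A \ast \Z$). Since the property of being contained in a proper free factor is conjugation-invariant, we conclude $\Stab(Y)$ is contained in a proper free factor of $\FN$, as desired.

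The one point requiring a little care — and the main obstacle — is ensuring that after collapsing we still have a genuine \emph{proper} free factor containing the relevant vertex group; this is where one uses that $\FN$ is finitely generated and freely indecomposable relative to nothing, so any one-edge free splitting has vertex groups that are proper free factors, together with the observation that collapsing edge orbits of the simplicial tree $S$ does not increase edge stabilizers (a trivial edge stabilizer stays trivial). One should also check the degenerate possibility that the collapsed tree $\bar S$ is a point; but $\bar S$ retains the orbit of the trivial-stabilizer edge $e$, so $\bar S$ is nontrivial and the argument goes through. I expect this to be short once the Bass--Serre bookkeeping is set up carefully.
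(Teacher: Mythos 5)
Your proof is correct and is essentially the argument the paper intends (the paper gives no proof, calling the lemma immediate): since the skeleton $S$ has an edge $e$ with trivial stabilizer, collapse all other edge orbits to get a one-edge free splitting whose vertex groups are proper free factors, and $\Stab(Y)$ fixes the vertex $Y \in V(S)$ so its image fixes a vertex of the collapsed tree. One small quibble: the aside that ``$\FN$ is finitely generated and freely indecomposable relative to nothing'' is not a meaningful fact and is not what you actually use; the needed input is just Bass--Serre theory for a one-edge free splitting together with minimality of $S$ (inherited from $T$) to rule out the collapsed tree being a point or having a global fixed point.
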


As noted above, the hypotheses are satisfied by any free $T$ admitting a transverse covering.

\subsection{Residuals and Intersections of Transverse Families}


If $X \subseteq T$ is a subtree of $T$ and if $\mathscr{X}$ is a transverse family in $T$, we let $X \smf \mathscr{X}:=X \ssm \cup I$, where $I$ runs over all arcs $I \subseteq X_i \in \mathscr{X}$ with $I \cap X$ non-degenerate.  Call $X \smf \mathscr{X}$ the \emph{residual} of $\mathscr{X}$ in $X$.  Say that $\mathscr{X}$ has \emph{non-degenerate} residual in $X$ if some component of $X \smf \mathscr{X}$ is non-degenerate, \emph{i.e.} if there is a non-degenerate arc $J \subseteq X$ that does not intersect any element of $\mathscr{X}$ non-degenerately; otherwise, say that $\mathscr{X}$ has \emph{degenerate} residual in $X$.  


If $\mathscr{Y}$ is another transverse family in $T$, define $\mathscr{X} \wedge \mathscr{Y}$ to be the family of subtrees of $T$ consisting of non-degenerate intersections $X \cap Y$ for $X \in \mathscr{X}$ and $Y \in \mathscr{Y}$.  Here is a simple observation:

\begin{lemma}\label{L.CommonRefinement}
 Let $T \in \partial \cvn$, and let $\mathscr{X}$ and $\mathscr{Y}$ be transverse families in $T$.  
\begin{enumerate}
 \item [(i)] $\mathscr{X} \wedge \mathscr{Y} = \emptyset$ if and only if $\mathscr{X} \cup \mathscr{Y}$ is a transverse family, and
 \item [(ii)] if $\mathscr{X} \wedge \mathscr{Y} \neq \emptyset$, then $\mathscr{X} \wedge \mathscr{Y}$ is a transverse family.
\end{enumerate}  
  
\end{lemma}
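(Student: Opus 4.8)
The plan is to verify both statements directly from the defining property of transverse families — namely that two distinct members meet in at most one point — using only elementary facts about subtrees of a tree: an intersection of two subtrees is again a subtree, and if two subtrees of a tree each meet a third in at least an arc, then all three pairwise intersections are comparable in a controlled way. I would first record the observation that $\mathscr{X} \wedge \mathscr{Y}$ is automatically $\FN$-invariant, since $g(X \cap Y) = gX \cap gY$ and $\mathscr{X}$, $\mathscr{Y}$ are each $\FN$-invariant; this disposes of the equivariance requirement in both parts.

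For part (i), the "if" direction is immediate: if $\mathscr{X} \cup \mathscr{Y}$ is a transverse family, then any $X \in \mathscr{X}$ and $Y \in \mathscr{Y}$ with $X \neq Y$ meet in at most one point, so $X \cap Y$ is degenerate and contributes nothing to $\mathscr{X} \wedge \mathscr{Y}$; and if $X = Y$ lies in both families the intersection is that single tree, which is non-degenerate but then $\mathscr{X}\cup\mathscr{Y}$ being a transverse family still forces distinct members to meet in a point, so one checks the only way $\mathscr{X}\wedge\mathscr{Y}$ could be non-empty is via such coincidences — and in that degenerate-overlap sense the honest statement is that no genuinely new non-degenerate intersection appears, which I will phrase carefully. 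For the "only if" direction, suppose $\mathscr{X} \wedge \mathscr{Y} = \emptyset$. Then for all $X \in \mathscr{X}$, $Y \in \mathscr{Y}$, the intersection $X \cap Y$ has at most one point. To see $\mathscr{X} \cup \mathscr{Y}$ is a transverse family I must also handle pairs within $\mathscr{X}$ (resp. within $\mathscr{Y}$), but those already meet in at most a point by hypothesis; so the union is a transverse family.

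For part (ii), let $X \cap Y$ and $X' \cap Y'$ be two distinct non-degenerate members of $\mathscr{X} \wedge \mathscr{Y}$, and suppose toward a contradiction that their intersection $Z := (X \cap Y) \cap (X' \cap Y') = (X \cap X') \cap (Y \cap Y')$ is non-degenerate, i.e. contains an arc $J$. Then $J \subseteq X \cap X'$; since $\mathscr{X}$ is a transverse family this forces $X = X'$. Symmetrically $J \subseteq Y \cap Y'$ forces $Y = Y'$, whence $X \cap Y = X' \cap Y'$, contradicting distinctness. Hence any two distinct members of $\mathscr{X} \wedge \mathscr{Y}$ meet in at most one point, and combined with the $\FN$-invariance and non-degeneracy built into the definition, $\mathscr{X} \wedge \mathscr{Y}$ is a transverse family. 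The main thing to be careful about — the only real subtlety — is the bookkeeping in part (i) around members that happen to belong to both $\mathscr{X}$ and $\mathscr{Y}$ (so that $X \cap Y = X = Y$ is non-degenerate "trivially"); everything else is a routine application of the transverse-family axiom to pairs of nested arcs.
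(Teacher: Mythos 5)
Your proof is correct and takes essentially the same route as the paper: your part (ii) is exactly the paper's argument (a non-degenerate overlap of $X\cap Y$ and $X'\cap Y'$ forces $X=X'$ and $Y=Y'$ by transversality of each family, plus the invariance remark), while part (i) is dismissed there as obvious. Your caveat about the direction ``$\mathscr{X}\cup\mathscr{Y}$ transverse $\Rightarrow \mathscr{X}\wedge\mathscr{Y}=\emptyset$'' failing when the two families share a member is a fair observation, but that direction is not the one used later in the paper, so nothing downstream is affected.
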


\begin{proof}
 Statement (i) is obvious.  For statement (ii), note that invariance of $\mathscr{X}$ and $\mathscr{Y}$ ensure that the collection $\mathscr{X} \wedge \mathscr{Y}$ is invariant.  If $X \cap Y, gX \cap hY \in \mathscr{X} \wedge \mathscr{Y}$ are such that $X \cap Y \cap gX \cap hY$ is non-degenerate, then since $\mathscr{X}$ and $\mathscr{Y}$ are transverse families, we have that $gX=X$ and $hY=Y$; hence $X \cap Y = gX \cap hY$.  
\end{proof}

We note that it also follows that for any $X \in \mathscr{X}$, the collection of those memebers of $\mathscr{X} \wedge \mathscr{Y}$ that are contained in $X$ is a transverse family for the action $Stab(X) \curvearrowright X$.

\section{Structure Theory for Special classes of trees}

In this head, we consider two classes of trees for which there exist structural results; these results will be useful in obtaining information about more general trees in $\partial \cvn$.

\subsection{Levitt's Coarse Structure Theorem}

Consider a tree $T \in \partial \cvn$ that is not simplicial; this means that the orbit of some point in $T$ has an accumulation point.  In this case, \cite{Lev94} gives that $T$ has a transverse covering, whose members either are simplicial or have dense orbits with respect to the action of the stabilizer.  The simplicial actions need not be minimal with respect to the action of the stabilizer; it is possible to have a trivial action on a segment.  Choosing the actions with dense orbits to be maximal and choosing the simplicial actions to be edges, one can make this decomposition unique; here are the details:


\begin{lemma}\cite{Lev94}\label{L.GoodGraph}
 Let $T$ be a very small $\FN$-tree.  If $T$ does not have dense orbits and if $T$ is not simplicial, then $T$ has a very small splitting with vertex actions either:
\begin{enumerate}
 \item [(i)] $H \curvearrowright Y_H$, having dense orbits, where $Y_H$ is maximal, or
 \item [(ii)] $E \curvearrowright e$, where $e$ is a segment whose interior does not contain a branch point of $T$ and which is maximal with respect to this property.
\end{enumerate}
Further, this transverse covering is unique.  
\end{lemma}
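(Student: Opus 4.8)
The plan is to obtain the transverse covering from Levitt's work \cite{Lev94}, which shows that a non-simplicial $\FN$-tree $T$ without dense orbits decomposes as a graph of actions whose vertex actions are either simplicial or have dense orbits under the stabilizer. The content to be verified is (a) that this can be arranged so the dense-orbit pieces are maximal and the simplicial pieces are single segments with no interior branch point, (b) that the resulting skeleton is very small, and (c) uniqueness.

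First I would set up the canonical decomposition. Starting from Levitt's transverse covering $\mathscr{Z}$, I would merge dense-orbit vertex trees: define an equivalence on the dense-orbit members of $\mathscr{Z}$ generated by non-degenerate intersection, and replace each class by the union of its members together with the closures of the bounded simplicial pieces squeezed between them — more precisely, take $Y_H$ to be the minimal subtree of the stabilizer $H$ of a maximal connected union of dense-orbit pieces. One checks $Stab(Y_H) \curvearrowright Y_H$ still has dense orbits (a connected union of dense-orbit subtrees, where consecutive ones meet in a point, has dense orbits for the group generated) and that by maximality distinct such $Y_H$ meet in at most a point. The complement of $\bigcup_{\FN} Y_H$ in $T$ is a disjoint union of open arcs; the closures of the maximal such arcs containing no branch point of $T$ in their interior give the segments $e$, with edge group $E = Stab(e)$, and one verifies that each finite arc of $T$ is covered by finitely many $Y_H$'s and $e$'s using local finiteness of branch points along an arc in a very small tree. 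This produces a transverse covering of the stated form; its skeleton $S$ is the tree whose edges record incidences, exactly as in Section 3.

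Second I would check $S$ is very small. Edge stabilizers of $S$ are $Stab_Y(x) = Stab(x) \cap Stab(Y)$ at attaching points $x$; since $T$ is very small, point stabilizers along an arc are trivial or maximal cyclic, and one must rule out non-maximal cyclic or non-cyclic edge groups and tripod stabilizers in $S$. For the segment edges $e$, since $e$ contains no branch point in its interior, its pointwise stabilizer is the arc stabilizer of $e$ in $T$, hence trivial or maximal cyclic, and equals $E$ by minimality of the piece; the maximality of $e$ (as a branch-point-free segment) forces maximality of the cyclic group, else one could extend $e$. For the $Y_H$ pieces one argues that a tripod in $S$ would produce either a tripod in $T$ with non-trivial stabilizer or a non-maximal cyclic arc stabilizer, both excluded. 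I expect this verification — matching up the local structure of $S$ with the very small hypothesis on $T$ — to be the main obstacle, since it requires careful bookkeeping of stabilizers at attaching points and the interplay between maximality of the two types of pieces.

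Finally, uniqueness. Suppose $\mathscr{Y}$ and $\mathscr{Y}'$ are two transverse coverings of the stated form. Using Lemma \ref{L.CommonRefinement}, the intersection family $\overline{\mathscr{Y}} \wedge \overline{\mathscr{Y}'}$ is again a transverse family; a dense-orbit piece of one, being indecomposable in the relevant sense and maximal, cannot properly intersect a segment $e'$ of the other (a segment has trivial residual for dense-orbit actions and no interior branch point), so each $Y_H \in \mathscr{Y}$ is contained in, hence by symmetry and maximality equal to, some $Y'_{H'} \in \mathscr{Y}'$; likewise the branch-point-free maximal segments are intrinsic to $T$ and so agree. Thus $\mathscr{Y} = \mathscr{Y}'$ up to the identification of closures, which gives uniqueness and completes the proof.
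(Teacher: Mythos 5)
The first thing to note is that the paper does not prove this lemma at all: it is quoted from Levitt \cite{Lev94} (see also \cite{Gui08}), and the sentence preceding it only records that one makes the dense-orbit pieces maximal and takes the simplicial pieces to be edges. So you are reconstructing Levitt's argument, and your outline is the natural one; but two steps have genuine problems. First, the merging step is incoherent as written: in a transverse covering distinct members meet in at most one point, so an equivalence ``generated by non-degenerate intersection'' identifies nothing; and absorbing the ``bounded simplicial pieces squeezed between'' dense-orbit pieces into $Y_H$ cannot work, since a vertex action containing a simplicial segment cannot have dense orbits (nor is the minimal subtree of the stabilizer of such a union necessarily contained in the union, or a dense-orbit tree). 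The maximal dense pieces are not obtained by merging: in Levitt's construction they arise directly as closures of components of the complement of the simplicial part of $T$, and the real work is to show that this family together with the maximal branch-point-free segments is a transverse covering with finitely many orbits of pieces -- a finiteness statement your sketch never establishes.

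Second, and more seriously, your verification that the skeleton $S$ is very small only treats the segment pieces and tripods. In the skeleton used in this paper the edges are pairs $(x,Y)$ with $x$ an attaching point, with stabilizer $Stab(x)\cap Stab(Y)$. For $Y=e$ a maximal simplicial segment this is an arc stabilizer of $T$, hence trivial or maximal cyclic, exactly as you say; but for $Y=Y_H$ a dense-orbit piece it is the stabilizer of $x$ in the action $H\curvearrowright Y_H$, and point stabilizers of very small dense-orbit actions can be non-cyclic of rank up to $N$ (Lemma \ref{L.CanonicalPeripheral}) -- for instance, attach an edge with trivial stabilizer to a dense-orbit tree at a point whose stabilizer has rank bigger than one. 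Your argument never addresses these edges, and they are precisely where the difficulty lies: the ``very small splitting'' one actually extracts from Levitt's theorem is the decomposition whose edge groups are stabilizers of the simplicial edges of $T$ (arc stabilizers, hence trivial or maximal cyclic because $T$ is very small), and any claim about all edges of the finer skeleton needs either a restriction of this kind or an additional argument about point stabilizers at attaching points of the dense pieces. Until that is supplied, the ``very small'' step is a genuine gap. Your uniqueness paragraph, by contrast, is in the right spirit: both types of pieces are intrinsically characterized by maximality, which is how the canonical nature of the decomposition is established.
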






We turn to another special case of actions in $\partial \cvn$.

\subsection{Geometric Trees}

We review geometric trees; the reader is assumed to have some familiarity with this subject; see \cite{BF95} for details.  For this section, we fix a basis $\mathscr{B}=\{b_1,\ldots,b_N\}$ for $\FN$.  Let $T \in \partial \cvn$, and let $K \subseteq T$ be a finite supporting subtree.  The restrictions of the elements of $\mathscr{B}$ to $K$ give a collection of partial isometries of $K$
$$
b_i:b_i^{-1}K \cap K \to K \cap b_iK
$$

The suspension of $(K,\mathscr{B})$, denoted $\mathscr{K}$, is got by gluing $(b_i^{-1}K\cap K)\times [0,1]$ to $K$: glue $b_i^{-1}K\cap K \times \{0\}$ to $K$ via inclusion, and glue $b_i^{-1}K \cap K \times \{1\}$ to $b_i(b_i^{-1}K \cap K)=K \cap b_iK$ so that in the adjunction space $\mathscr{K}$, one has for $x \in b_i^{-1}K \cap K$ that $\{1\} \times \{x\} =b_i(x)$.  The foliations given by $\{pt.\} \times [0,1]$ on the sets $(b_i^{-1}K \cap K) \times [0,1]$ extend to a singular foliation on $\mathscr{K}$; the leaves of this foliation correspond to orbits in the pseudo group generated by the restrictions of the elements of $\mathscr{B}$ to Borel subsets of $K$.  Note that we usually just consider maximal restrictions of elements of $\FN$ to $K$.

There is a transverse measure on $\mathscr{K}$ given by the Lebesgue measure on $K$, which is inherited by the Lebesgue measure on arcs of $T$.  Note that $\pi_1(\mathscr{K})=\FN$, and so $\FN$ acts on the universal cover $\tilde{\mathscr{K}}$ by deck transformations.  Collapsing each leaf of the lifted foliation on $\tilde{\mathscr{K}}$ to a point gives a tree $T_{K}$, equipped with an isometric action of $\FN$.  The tree $T_K$ comes with a more-or-less obvious (surjective) equivariant map $f_K:T_K \to T$--this comes from the assumption that $K$ is a supporting subtree for $T$  ($K$ is clearly a supporting subtree of $T_K$).  

The map $f_K:T_k \to T$ is called a \emph{resolution} of $T$; we say that this resolution corresponds to the choices $\mathscr{B}$ and $K$.  If in $T$ there is a finite subtree $K$ such that the map $f_K:T_K \to T$ is an isometry, then the resolution $f_K:T_K \to T$ is called \emph{exact}, and $T$ is called \emph{geometric}.  If $f_K:T_K \to T$ is exact, then for another basis  $\mathscr{B}'$, then there is a finite subtree $K'$ such that $f_{K'}:T_{K'} \to T$ is exact as well.  Geometric trees are special: the dynamical structure of a geometric tree $T$ can be understood by studying the foliation on $\mathscr{K}$, which completely encodes the action $\FN \curvearrowright T$.  

\subsection{Structure of Geometric Trees}

The key result concerning the dynamical structure of geometric actions is a theorem of Imanishi \cite{Ima79}.  Imanishi's theorem was rediscovered by Morgan-Shalen \cite{MS84}, and was proved in the present context by Levitt \cite{GLP94}; it states that given a finite 2-complex $A$, equipped with a codimension-1 singular measured foliation, one is able to cut $A$ along certain subsets of singular leaves to get a new complex where every leaf either is compact or is locally dense \cite{GLP94, BF95}; the key point is that no leaf closure is a Cantor set.  A simplicial tree with finitely presented vertex and edge groups is geometric, so all simplicial trees in $\partial \cvn$ are geometric.  We state the following consequnce of Imanishi's theorem in the present context; for the statement we fix some basis for $\FN$ and use the notation from above.  

\begin{prop}\label{P.WeakImanishi}\cite[Proposition 1.25]{Gui08}
 Let $T \in \partial \cvn$ be geometric, and assume that $T$ is not simplicial.  Either there is a finite subtree $K \subseteq T$, such that the foliation on $\mathscr{K}$ is minimal and such that $f_K:T_K \to T$ is exact, or $T$ has a transverse covering $\mathscr{Y}$, such that every $Y \in \mathscr{Y}$ either is a simplicial edge or is dual to a 2-complex equipped with a minimal measured foliation.  
\end{prop}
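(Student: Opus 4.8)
The plan is to deduce this from Imanishi's theorem as stated in the preceding discussion, applied to the complex $\mathscr{K}$ associated to a resolution $f_K : T_K \to T$. Since $T$ is geometric and not simplicial, fix a basis $\mathscr{B}$ for $\FN$ and a finite supporting subtree $K \subseteq T$ for which $f_K : T_K \to T$ is exact; form the suspension $\mathscr{K}$ with its codimension-$1$ singular measured foliation $\mathscr{F}$. First I would invoke Imanishi's theorem to cut $\mathscr{K}$ along the (finitely many) subsets of singular leaves guaranteed by the theorem, obtaining a complex $\mathscr{K}'$ in which every leaf is either compact or locally dense; because no leaf closure is a Cantor set, the leaf closures partition $\mathscr{K}'$ into finitely many pieces, each either a family of compact leaves or a component in which leaves are locally dense. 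The cutting can be arranged to be compatible with the $\FN$-action on the universal cover, so this decomposition of $\mathscr{K}'$ lifts to an $\FN$-invariant decomposition of $\widetilde{\mathscr{K}'}$.

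Next I would translate this decomposition of the foliated complex back into a decomposition of $T$. Collapsing leaves of the lifted foliation on $\widetilde{\mathscr{K}}$ to points recovers $T$ (since $f_K$ is exact); the region of $\mathscr{K}'$ swept out by compact leaves collapses to a simplicial part, giving simplicial edges in $T$, while each locally dense component $C$ collapses to a subtree $Y_C \subseteq T$. One checks that $\{Y_C\}$, together with the simplicial edges, forms an $\FN$-invariant family of non-degenerate subtrees whose pairwise intersections are at most points — i.e., a transverse family — and that every finite arc of $T$, being the image of a compact piece of $\widetilde{\mathscr{K}}$, is covered by finitely many of these pieces, so we have a transverse covering $\mathscr{Y}$. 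Each $Y_C$ is, by construction, dual to the sub-$2$-complex $C$ (or rather its lift) equipped with the restricted foliation, which is minimal precisely because leaves in $C$ are locally dense and $C$ was taken as a leaf-closure component. Finally, the dichotomy in the statement is just the alternative: either there is only one locally dense component and no compact part is needed to cover $T$ — in which case, after possibly passing to a smaller $K$, the foliation on $\mathscr{K}$ is already minimal and $f_K$ is exact — or the transverse covering $\mathscr{Y}$ is non-trivial, which is the second alternative.

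The main obstacle I anticipate is bookkeeping the equivariance and the passage between the three models (the complex $\mathscr{K}'$, its universal cover, and the tree $T$): one must be careful that Imanishi's cutting is performed along $\FN$-invariant subsets so that the resulting decomposition descends correctly, that the subtrees $Y_C$ are genuinely non-degenerate (this uses that locally dense components carry non-trivial transverse measure) and closed, and that the ``maximal dense-orbit, edges elsewhere'' normalization does not break the covering property. The subtlety that a simplicial vertex action can be a trivial action on a segment (noted already in the discussion of Levitt's theorem) should be kept in mind when verifying that $\mathscr{Y}$ covers all finite arcs. Since the proposition is quoted as \cite[Proposition 1.25]{Gui08}, I would at this point simply cite Guirardel for the details of these verifications rather than reproduce them.
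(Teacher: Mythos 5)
The paper itself gives no proof of this proposition, simply citing \cite[Proposition 1.25]{Gui08}; your sketch via Imanishi's theorem applied to the exact resolution $\mathscr{K}$, decomposing into compact-leaf and locally dense pieces and then pushing this down to a transverse covering of $T$, is precisely the argument that Guirardel carries out and that the paper's preceding discussion of Imanishi/Rips machinery is pointing toward. You correctly flag the real bookkeeping subtleties (equivariance of the cutting, non-degeneracy of the $Y_C$, trivial-segment actions) and, like the paper, ultimately defer the verification to Guirardel's reference, so this matches.
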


If $\mathscr{K}$ is minimal but not \emph{pure}, then there is transverse covering of $T$; see \cite{BF95}, \cite{Gui08}, or the discussion below.  Proposition \ref{P.WeakImanishi} is stronger than Lemma \ref{L.GoodGraph} applied to geometric trees, as we get more infomation about the vertex actions with dense orbits.  The example to keep in mind is a surface carrying a measured lamination, whose underlying lamination is not minimal; see Appendix.  


As we will soon see, the above proposition also gives information about the possible residuals of transverse families in geometric tree with dense orbits.  Indeed, let $T \in \partial \cvn$ have dense orbits, and suppose that $T$ is geometric.  Proposition \ref{P.WeakImanishi} gives that $T$ has a transverse covering by subtrees that are dual to 2-complexes carrying minimal measured foliations.  One might expect such subtrees to be ``irreducible'' in some sense, and this is the case.  In order to formalize this, we will need a few more definitions and results, which will be collected in the next section.

\section{Mixing Properties of Trees}

Dense orbits is a very weak notion of ``irreducibility'' for a tree.  As before, the examples to keep in mind are surface trees; more details on the following appear in the Appendix.  Consider a surface $S$, equipped with a measured lamination $(L,\mu)$.  It is an easy exercise to check that as long as $L$ has no simple closed curve components, the dual tree $T_{(L,\mu)}$ has dense orbits; in particular, even if $_{(L,\mu)}$ has dense orbits, $L$ can have many minimal components, and we ought to be able to detect such a situation.  Hence, we introduce some notions refining dense orbits.

Let $T \in \partial \cvn$.  The action $\FN \curvearrowright T$ is called \emph{mixing} if for any non-degenerate arcs $I, J \subseteq T$, there are $g_1, \ldots, g_r \in \FN$ such that $J \subseteq g_1I \cup \ldots \cup g_rI$.  The action $\FN \curvearrowright T$ is \emph{indecomposable} if $T$ is mixing and if the elements $g_i$ can be chosen so that $g_iI \cap g_{i+1}I$ are non-degenerate; note that in the definition we do not require that $g_iI \cap J$ be non-degenerate for each $i$.  The notion of mixing for group actions on trees was introduced by Morgan \cite{Mor88}; Guirardel introduced the stronger notion of indecomposability in \cite{Gui08}, where he observed the following (see Proposition 1.25 and Remark 1.29 of \cite{Gui08}:

\begin{lemma}\cite{Gui08}
 Let $T \in \partial \cvn$.  If $T$ admits an exact resolution by a pure band complex carrying a minimal measured foliation, then $T$ is indecomposable. 
\end{lemma}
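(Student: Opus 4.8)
The plan is to translate indecomposability of $T$ into a leaf-following statement for the minimal foliation on the band complex, and then to finish with a one-dimensional covering argument. By hypothesis there is a finite supporting subtree $K\subseteq T$ for which the resolution $f_K\colon T_K\to T$ is an isometry, the suspension $\mathscr K$ is pure, and the induced foliation $\mathcal F$ on $\mathscr K$ is minimal. Via $f_K$ I identify $T$ with $T_K$, which is the leaf space of the lifted foliation $\tilde{\mathcal F}$ on the universal cover $\tilde{\mathscr K}$, and I write $q\colon\tilde{\mathscr K}\to T$ for the equivariant collapse map. First I record the standard dictionary: every non-degenerate arc of $T$ is $q(\tilde\gamma)$ for some compact arc $\tilde\gamma$ of $\tilde{\mathscr K}$ transverse to $\tilde{\mathcal F}$, and $d_T$ on it agrees with the transverse measure of $\tilde\gamma$; the $\FN$-action on $T$ is induced by deck transformations of $\tilde{\mathscr K}$; and if some holonomy map of $\tilde{\mathcal F}$ carries a non-degenerate sub-arc $\sigma$ of a transverse arc $\tilde\delta$ into a transverse arc $\tilde\gamma$, then $q(\sigma)$ is a non-degenerate sub-arc of $q(\tilde\gamma)\cap q(\tilde\delta)$, since holonomy along $\tilde{\mathcal F}$ becomes the identity after collapsing leaves.

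Now fix non-degenerate arcs $I,J\subseteq T$, lift them to transverse arcs $\tilde I,\tilde J$ in $\tilde{\mathscr K}$ (so $q(\tilde I)=I$ and $q(\tilde J)=J$), and let $\alpha,\beta$ be their images in $\mathscr K$. By the dictionary it suffices to produce $g_1,\dots,g_r\in\FN$ with $J\subseteq g_1I\cup\dots\cup g_rI$ such that consecutive translates meet $J$ in relatively open sub-arcs that overlap; then $g_kI\cap g_{k+1}I$ is non-degenerate for each $k$, and the $g_i$ witness indecomposability. To build them I follow leaves. Let $x\in J$ and let $\ell$ be the leaf of $\mathcal F$ through the point of $\beta$ over $x$. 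Minimality of $\mathcal F$ (Imanishi's theorem) gives that $\ell$ is dense in $\mathscr K$, so $\ell$ meets $\alpha$; the segment of $\ell$ from its point over $x$ to a first intersection with $\alpha$ is a compact arc and hence crosses finitely many bands, and composing the partial holonomies across those bands yields a holonomy map $h$ of $\mathcal F$ defined at the point of $\beta$ over $x$. Here I invoke purity of $\mathscr K$: it guarantees that $h$ is in fact defined on a non-degenerate sub-arc $\beta_x$ of $\beta$ containing that point, carrying $\beta_x$ isometrically onto a sub-arc of $\alpha$ (if $\ell$ passes through a singular point, $\beta_x$ may have to be taken half-open, which is harmless below). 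Lifting the chain of bands to $\tilde{\mathscr K}$ produces $g_x\in\FN$ and a non-degenerate sub-arc $\tilde\beta_x\subseteq\tilde J$ over $\beta_x$ that a lift of $h$ carries into $g_x\tilde I$, so by the dictionary $g_xI$ contains the relatively open sub-arc $q(\tilde\beta_x)$ of $J$, a neighborhood of $x$ in $J$.

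As $x$ ranges over $J$, the arcs $g_xI\cap J$ form a cover of the compact arc $J$ by relatively open sub-arcs; I extract a finite, then a minimal, subcover. A routine one-dimensional chain argument re-indexes a minimal subcover as $g_1I,\dots,g_rI$ so that $g_1I$ and $g_rI$ contain the two endpoints of $J$ and, for each $k$, the sub-arcs $g_kI\cap J$ and $g_{k+1}I\cap J$ overlap in a non-degenerate arc, which then lies in $g_kI\cap g_{k+1}I$. Together with $J\subseteq g_1I\cup\dots\cup g_rI$, this is exactly the definition of indecomposability, and in particular of mixing.

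The main obstacle is the step where purity is used to make the domain of $h$ a non-degenerate sub-arc rather than a single leaf. This is precisely the feature that fails when $\mathscr K$ is minimal but not pure: such a complex instead carries a transverse covering of $T$ (cf.\ the discussion following Proposition~\ref{P.WeakImanishi}), so that $T$ is decomposable. Making this input precise — roughly, that the finitely many singular leaves of $\mathcal F$ cannot separate $\alpha$ from $\beta$ inside any sub-band-complex, so that generic leaves spread $\beta$ uniformly over $\alpha$ — is the heart of the proof; the leaf-space dictionary and the covering argument are routine.
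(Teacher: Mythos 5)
The paper itself does not prove this lemma; it cites it to Guirardel \cite{Gui08} (Proposition 1.25 and Remark 1.29), so there is no internal proof to compare against. Evaluating your argument on its own terms: the leaf-space dictionary, the use of minimality to make leaves dense, and the one-dimensional chaining of a finite subcover are all sound and in the right spirit. The problem is that the one step that actually carries the weight of the lemma is left as a gesture. You say that purity ``guarantees that $h$ is in fact defined on a non-degenerate sub-arc $\beta_x$,'' and then in the final paragraph you concede that ``making this input precise \dots is the heart of the proof.'' That is exactly where a proof has to exist, and none is given.

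Worse, I think the role you assign to purity is not quite right, and this matters. The holonomy along a leaf path is a composition of finitely many band crossings, each of which is an isometry between non-degenerate base arcs; so for all but countably many starting points (those whose leaf path meets a band endpoint or singularity before reaching $\alpha$), the domain of $h$ is already a non-degenerate open arc, \emph{by minimality alone}, without any appeal to purity. If one follows your covering-and-chaining scheme and simply accepts one-sided domains at the bad points, the argument appears to go through for \emph{any} minimal band complex — but that conclusion would be too strong, since the paper explicitly notes that a minimal-but-not-pure $\mathscr{K}$ yields a transverse covering of $T$, hence a decomposable $T$. The actual content of purity must therefore be a global statement: it rules out the possibility that the holonomy orbit of a non-degenerate transversal stays trapped in a proper sub-band-system, or equivalently that the singular/boundary leaves separate $\alpha$ from $\beta$ in a way that breaks the chain at finitely many bad points (giving consecutive translates of $I$ that meet $J$ only in single points). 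Your proof neither states the definition of purity nor shows how it excludes this obstruction, so the argument as written does not distinguish pure from non-pure minimal complexes and cannot be regarded as complete. A cleaner route, closer to Guirardel's, is to argue the contrapositive — a transverse family in $T$ pulls back to a proper invariant sub-band-system of $\mathscr{K}$, contradicting purity — or to invoke the Rips trichotomy (surface/toral/thin) and check indecomposability in each case.
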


Whence, we get the following:

\begin{cor}\cite[Proposition 1.25]{Gui08}\label{C.GeomGraph}
 Let $T \in \partial \cvn$.  If $T$ is geometric, then either $T$ is indecomposable, or else $T$ has a transverse covering $\mathscr{Y}$, where each $Y \in \mathscr{Y}$ either is indecomposable or is a simplicial edge.  In particular, if $T$ has dense orbits, then $T$ has a transverse covering by indecomposable trees.
\end{cor}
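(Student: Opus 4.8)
The plan is to feed Proposition~\ref{P.WeakImanishi} into the preceding Lemma (an exact resolution by a \emph{pure} band complex with minimal foliation forces indecomposability), running an induction on a complexity of band complexes to absorb the case in which the foliation produced is minimal but not pure. First I would dispose of the simplicial case: if $T$ is simplicial, the collection of edges of $T$ is a transverse covering of $T$, each of whose members is a simplicial edge, so the conclusion holds trivially. Assume then that $T$ is not simplicial and apply Proposition~\ref{P.WeakImanishi}. In its first alternative there is a finite subtree $K\subseteq T$ with $f_K\colon T_K\to T$ exact and with the foliation on $\mathscr{K}$ minimal; if $\mathscr{K}$ is pure, the preceding Lemma gives that $T$ is indecomposable and we are done. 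If $\mathscr{K}$ is minimal but not pure, then, as remarked just after Proposition~\ref{P.WeakImanishi}, the cutting and decomposition calculus for band complexes of \cite{BF95}, packaged in \cite{Gui08}, produces a nontrivial decomposition of $\mathscr{K}$, hence a transverse covering $\mathscr{Y}$ of $T$ each member of which is either a simplicial edge or is carried by a \emph{proper} sub-band-complex of $\mathscr{K}$. In the second alternative of Proposition~\ref{P.WeakImanishi} we are handed such a transverse covering $\mathscr{Y}$ directly, each member being a simplicial edge or dual to a $2$-complex with a minimal measured foliation (hence itself carrying an exact resolution by a band complex with minimal foliation).

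Now I would induct, using a complexity for geometric trees equipped with an exact resolution that drops strictly upon passing to a piece carried by a proper sub-band-complex --- for instance the one Guirardel sets up in \cite{Gui08} for exactly this kind of argument. For each member $Y$ of $\mathscr{Y}$ that is not a simplicial edge, $Y$ is a geometric $\Stab(Y)$-tree carrying an exact resolution by a band complex with minimal foliation: if that band complex is pure, then $Y$ is indecomposable by the preceding Lemma and we retain $Y$; otherwise we replace $Y$ by a transverse covering of $Y$ (for the action of $\Stab(Y)$) into simplicial edges and pieces of strictly smaller complexity, and iterate. The complexity bound forces termination after finitely many rounds. The union of all of the surviving pieces, together with all of the connecting simplicial edges, is again a transverse covering of $T$: its skeleton is obtained from the skeleton of $\mathscr{Y}$ by repeatedly blowing up the vertices that carry the pieces that were refined, and the transverse-family property is preserved at each stage exactly as in Lemma~\ref{L.CommonRefinement}. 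Every member of the resulting covering is indecomposable or a simplicial edge, which is the first assertion.

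For the last sentence, suppose $T$ has dense orbits. If $T$ is itself indecomposable there is nothing to prove, so suppose not and take the covering $\mathscr{Y}$ produced above. No member of $\mathscr{Y}$ is a simplicial edge, since every member of a transverse covering of a tree with dense orbits has dense orbits \cite{Gui08}, whereas a simplicial edge does not have dense orbits for the action of its (cyclic or trivial) stabilizer --- concretely, a neighborhood in $T$ of an interior, non-attaching point $m$ of such an edge $e$ lies inside $e$, and $\FN\cdot m\cap e=\Stab(e)\cdot m$ is finite, so $\FN\cdot m$ is not dense in $T$. Hence every member of $\mathscr{Y}$ is indecomposable.

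The step I expect to be the genuine obstacle is the passage, in the non-pure case, from a minimal band complex to a transverse covering whose pieces are carried by \emph{proper} sub-band-complexes: this is exactly where Imanishi's theorem and the band-complex machinery of \cite{BF95} do the real work, and one must be careful both with the definition of purity and with the verification that the pieces really are smaller, so that the induction terminates. The remaining ingredients --- the simplicial case, indecomposability of pure pieces via the preceding Lemma, and the reassembly of the iterated refinements into a single transverse covering of $T$ --- are comparatively routine.
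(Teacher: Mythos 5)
Your proposal is correct and follows essentially the same route as the paper, which treats this corollary as a direct consequence of Proposition~\ref{P.WeakImanishi} together with the preceding lemma on pure minimal band complexes, deferring the minimal-but-not-pure case and the inductive refinement to the band-complex machinery of \cite{BF95} and \cite{Gui08} (the result is simply cited as Guirardel's Proposition~1.25). Your fleshed-out treatment of the non-pure case and of the dense-orbits conclusion (via non-attaching interior points of a putative simplicial edge) fills in exactly the details the paper leaves to the references.
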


To understand the dynamical structure of a tree $T \in \partial \cvn$, we will find transverse families whose members have some mixing properties.  It will be helpful to have a characterization of the above mixing properties, or rather their negations.  We will use the following discussion about the implications of the negations of mixing and indecomposable; we also establish some notation.

\begin{discussion}\label{D.BuildTF}
 We will use the notation presented here in the sequel.  Let $T \in \partial \cvn$, and let $I \subseteq T$ be a non-degenerate subtree.  Define $X_I$ to be the union of all arcs $J$ of $T$ such that there are $g_1,\ldots,g_r \in \FN$ such that $J \subseteq g_1I \cup \ldots \cup g_rI$ and such that $g_iI \cap g_{i+1}I \neq \emptyset$.  Define $Y_I$ similarly but with the additional requirement that $g_iI \cap g_{i+1}I$ be non-degenerate.  

 Note that $T$ is not mixing if and only if there is a non-degenerate arc $I \subseteq T$ such that $X_I \neq T$; $T$ is not indecomposable if and only if there is a non-degenerate arc $I \subseteq T$ such that $Y_I \neq T$.  By construction, we have that for $g \in \FN$, if $gX_I \neq X_I$, then $gX_I \cap X_I = \emptyset$; and if $gY_I \neq Y_I$, then $gY_I \cap Y_I$ is degenerate.  In particular, the collection $\mathscr{X}_I=\{gX_I\}_{g \in \FN}$, respectively $\mathscr{Y}_I=\{gY_I\}_{g \in \FN}$, is a transverse familiy whenever $X_I$, respectively $Y_I$, is a proper subtree of $T$.
\end{discussion}

We immediately get the following, which is observed in \cite{R10a}. 

\begin{lemma}\label{L.IndecompChar}
 A tree $T \in \partial \cvn$ is indecomposable if and only if there is no transverse family in $T$.
\end{lemma}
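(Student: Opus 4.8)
The plan is to prove both directions directly from Discussion \ref{D.BuildTF}. For the forward direction, suppose $T$ admits a transverse family $\mathscr{Z} = \{Z_v\}_{v \in V}$; I want to produce a non-degenerate arc $I$ with $Y_I \neq T$, which by Discussion \ref{D.BuildTF} shows $T$ is not indecomposable. Pick any member $Z = Z_{v_0}$ and any non-degenerate arc $I \subseteq Z$. I claim $Y_I \subseteq Z$. Indeed, $Y_I$ is built from chains $g_1 I, \ldots, g_r I$ with consecutive intersections non-degenerate; each $g_j I$ is a non-degenerate arc contained in $g_j Z \in \mathscr{Z}$. An easy induction shows every $g_j Z$ equals $Z$: if $g_j Z = Z$ and $g_{j+1} Z \neq Z$, then $g_j I \cap g_{j+1} I \subseteq Z \cap g_{j+1}Z$ is at most a point since $\mathscr{Z}$ is a transverse family, contradicting non-degeneracy of the intersection. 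Hence every arc $J$ entering the definition of $Y_I$ lies in $Z$, so $Y_I \subseteq Z \subsetneq T$ (the inclusion is proper because a transverse family consists of non-degenerate subtrees whose distinct translates meet in at most a point, so no single member can be all of $T$ — otherwise $T$ would be the only member and the family degenerate; one should state this as part of the convention on transverse families). Therefore $T$ is not indecomposable.

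For the converse, suppose $T$ is not indecomposable. By Discussion \ref{D.BuildTF} there is a non-degenerate arc $I \subseteq T$ with $Y_I \subsetneq T$, and the discussion already records that in this case $\mathscr{Y}_I = \{gY_I\}_{g \in \FN}$ is a transverse family. So $T$ admits a transverse family, and we are done. In other words, the hard work of this equivalence has essentially been front-loaded into Discussion \ref{D.BuildTF}: the converse direction is immediate, and the forward direction is the containment argument above.

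The only real step to carry out carefully is the forward direction's inductive claim that a $Y_I$-chain starting inside a transverse-family member $Z$ never leaves $Z$; this is where the non-degeneracy requirement in the definition of $Y_I$ (as opposed to $X_I$) is essential, and it is exactly the feature that fails for $X_I$ — consecutive translates in an $X_I$-chain may meet in a single point, which can be an attaching point joining two distinct members of the family, so $X_I$ can genuinely escape $Z$. This contrast is worth a sentence, since it explains why the analogous statement "$T$ is mixing iff there is no transverse covering-like obstruction" is more delicate and is not claimed here. I expect no serious obstacle: modulo the bookkeeping about transverse families consisting of genuinely non-degenerate subtrees, the proof is a two-paragraph argument invoking Discussion \ref{D.BuildTF} in both directions.
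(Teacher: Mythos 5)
Your proof is correct and takes essentially the same route as the paper, which presents the lemma as an immediate consequence of Discussion \ref{D.BuildTF}: the forward direction is the containment $Y_I \subseteq Z \subsetneq T$ obtained by the chain-stays-in-$Z$ induction, and the converse is the citation of the transverse family $\mathscr{Y}_I$ constructed there. Your remark that one must adopt the convention that a transverse family consists of proper subtrees of $T$ (excluding the trivial family $\{T\}$) is a genuine, if minor, point that the paper leaves implicit.
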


Here is another simple consequence.

\begin{lemma}\label{L.MixGraph}
 Let $T \in \partial \cvn$, and suppose that $T$ is mixing.  If $T$ is not indcomposable, then $T$ has a transverse covering; further, every transverse covering of $T$ contains exactly one orbit.
\end{lemma}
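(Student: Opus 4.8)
The plan is to use Lemma \ref{L.IndecompChar} together with the notation of Discussion \ref{D.BuildTF}. Since $T$ is mixing but not indecomposable, Lemma \ref{L.IndecompChar} gives that $T$ admits a transverse family; equivalently, there is a non-degenerate arc $I \subseteq T$ such that $Y_I \neq T$, so that $\mathscr{Y}_I = \{gY_I\}_{g \in \FN}$ is a transverse family. The first step is to upgrade $\mathscr{Y}_I$ to a transverse covering. By construction $Y_I$ is a union of arcs, so passing to closures $\overline{\mathscr{Y}_I} = \{\overline{gY_I}\}$ is still a transverse family; the content is to check that every finite arc $J \subseteq T$ is covered by finitely many translates of $\overline{Y_I}$. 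Here is where mixing is used: given any non-degenerate arc $J$, since $T$ is mixing there are $g_1, \dots, g_r$ with $J \subseteq g_1 I \cup \dots \cup g_r I$, and since each $g_k I$ is a non-degenerate arc, it meets some translate of $Y_I$ non-degenerately (indeed $g_k I \subseteq g_k Y_I$ because $g_k I$ is trivially a ``chain'' built from $g_k I$ itself), so $J$ is covered by $g_1 Y_I, \dots, g_r Y_I$, hence by their closures. Thus $\overline{\mathscr{Y}_I}$ is a transverse covering of $T$, so $T$ has a transverse covering.

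The second step is to show that \emph{any} transverse covering $\mathscr{Z}$ of $T$ consists of a single $\FN$-orbit. Let $Z, Z' \in \mathscr{Z}$; I want to produce $g \in \FN$ with $gZ = Z'$. The idea is that mixing forces all vertex trees to be ``the same size'': pick non-degenerate arcs $I \subseteq Z$ and $J \subseteq Z'$. By mixing there are $g_1, \dots, g_r$ with $J \subseteq g_1 I \cup \dots \cup g_r I$. Since $J \subseteq Z'$ is a non-degenerate arc inside the vertex tree $Z'$, and the $g_k I$ lie in translates $g_k Z$ of vertex trees, the transverse-family property of $\mathscr{Z}$ forces: any $g_k Z$ that meets $Z'$ in more than a point must equal $Z'$. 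But $J$ is non-degenerate and covered by the $g_k I \subseteq g_k Z$, so at least one $g_k Z$ meets $Z'$ non-degenerately, whence $g_k Z = Z'$ for that $k$. Therefore $Z$ and $Z'$ are in the same orbit, so $\mathscr{Z}$ has exactly one orbit.

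I expect the main obstacle to be the bookkeeping in the second step — specifically, making precise that a non-degenerate arc $J$ contained in a vertex tree $Z'$ cannot be covered entirely by arcs each of which lies in a \emph{different} vertex tree meeting $Z'$ in at most a point. The point is that a finite union of subtrees each intersecting $Z'$ in at most one point, together with $Z'$, cannot contain $J$ unless one of those subtrees is $Z'$ itself: if $g_k Z \cap Z'$ is at most a point for every $k$, then $(g_1 I \cup \dots \cup g_r I) \cap Z'$ is a finite set of points, which cannot contain the non-degenerate arc $J$. This is elementary once set up correctly, but one must be a little careful because the $g_k I$ need not be contained in $Z'$ and the chain condition from Discussion \ref{D.BuildTF} is not literally invoked here — only plain mixing is. One should also remark that the transverse covering produced need not be minimal in the naive sense, but its skeleton is automatically minimal by minimality of $T$, as recalled in Section 3; this is not needed for the statement but is worth a parenthetical. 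A final routine point: a transverse covering with one orbit is nontrivial precisely because $Y_I \neq T$, consistent with $T$ not being indecomposable.
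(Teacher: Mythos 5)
Your proposal is correct and takes essentially the same route as the paper: produce the transverse family $\mathscr{Y}_I$ from Discussion \ref{D.BuildTF}, use mixing to cover an arbitrary arc by finitely many translates of an arc $I$ lying in one member, and then use the transverse-family property to force both conclusions. The paper declares these two conclusions ``immediate'' after the mixing step; your write-up just fills in those same details.
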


\begin{proof}
 Since $T$ is not indecomposable, Discussion \ref{D.BuildTF} gives a transverse family $\mathscr{Y}$ for $T$.  Let $I \subseteq Y \in \mathscr{Y}$ be an arc.  Since $T$ is mixing, for any arc $J \subseteq T$, there are $g_1,\ldots,g_r \in \FN$ such that $J \subseteq g_1I \cup \ldots \cup g_rI$; both conclusions are immediate.
\end{proof}

Indecomposable trees in $\partial \cvn$ were investigated in \cite{R10a}; we summarize the main results of that paper in the following:

\begin{prop}\label{P.Indecomp}
 Let $T \in \partial \cvn$ be indecomposable, and let $H \leq \FN$ be finitely generated.  The action $H \curvearrowright T_H$ is non-simplicial if and only if $H$ has finite index in $\FN$.  If in addition $T$ is free, then $H$ carries a leaf of $L(T)$ if and only if $H$ has finite index in $\FN$. 
\end{prop}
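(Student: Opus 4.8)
The plan is to prove the two assertions in turn, relying on Lemma \ref{L.IndecompChar} (indecomposability is equivalent to admitting no transverse family) and on Lemma \ref{L.Carry} to translate between ``carries a leaf'' and the dynamical/peripheral dichotomy. Throughout, fix $T \in \partial \cvn$ indecomposable and $H \leq \FN$ finitely generated.

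\emph{First assertion.} The ``if'' direction is easy: if $[\FN:H] < \infty$, then $T_H = T$ (a finite-index subgroup contains hyperbolic elements with axes covering all of $T$ by minimality), and $T$ is not simplicial since $T \in \partial \cvn$. For the ``only if'' direction I would argue contrapositively: suppose $[\FN:H] = \infty$ and show $H \curvearrowright T_H$ is simplicial. The key point is that an infinite-index finitely generated subgroup $H$ has the property that the $\FN$-translates of $T_H$ cannot cover $T$ ``densely'' in the way indecomposability demands. More precisely, I would consider the collection $\mathscr{Y} = \{g T_H\}_{g \in \FN}$ of translates of $T_H$; if $T_H$ contained a non-degenerate arc $I$ with the property that $\FN$-translates of $I$ chain up (in the sense of $Y_I$ from Discussion \ref{D.BuildTF}) to cover $T$, one should be able to push this back to show $H$ has finite index, using that only finitely many cosets $gH$ can have $gT_H$ meeting a fixed finite supporting subtree $K$ — here one invokes that $H$ is quasi-convex (M. Hall) so that $T_H$ is ``convex-cocompact'' relative to $H$. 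If instead the translates of any arc in $T_H$ fail to chain up to all of $T$, then $Y_I \neq T$ for such $I$, contradicting Lemma \ref{L.IndecompChar} unless $T_H$ contains no non-degenerate arc, i.e. $H$ is elliptic, or $T_H$ is a line acted on simplicially; in all surviving cases $H \curvearrowright T_H$ is simplicial. I expect the \textbf{main obstacle} is making rigorous the ``only finitely many translates meet $K$'' step and ruling out the possibility that $T_H$ is itself a non-simplicial proper subtree whose translates still chain up to $T$ — this requires carefully exploiting indecomposability (not merely mixing) to get the chaining with non-degenerate overlaps, together with a counting/coset argument bounding the index of $H$.

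\emph{Second assertion.} Now assume in addition that $T$ is free. By Lemma \ref{L.Carry}, $H$ carries a leaf of $L(T)$ iff either (i) some non-trivial element of $H$ fixes a point of $T$, or (ii) $H \curvearrowright T_H$ is not discrete. Since $T$ is free, option (i) is impossible, so $H$ carries a leaf iff $H \curvearrowright T_H$ is non-discrete, equivalently (for a minimal very small action) non-simplicial. By the first assertion, this happens iff $[\FN:H] < \infty$. The only subtlety is confirming that for $T$ free with dense orbits, ``non-discrete action of $H$ on $T_H$'' coincides with ``$H \curvearrowright T_H$ non-simplicial'' — this follows because arc stabilizers in $T$ (hence in $T_H$) are trivial by \cite{LL03}, so a discrete action with trivial edge stabilizers is simplicial. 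Thus the second assertion is a direct consequence of the first together with Lemma \ref{L.Carry}, and requires no further serious work beyond observing that freeness kills the peripheral alternative.
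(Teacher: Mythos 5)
Your ``if'' direction and your reduction of the second assertion to the first (via Lemma \ref{L.Carry}, triviality of point stabilizers when $T$ is free, and the equivalence of discrete and simplicial for minimal actions with trivial arc stabilizers) are fine, and they match how this would be done; note, though, that the paper does not actually prove this proposition --- it is stated as a summary of the main results of \cite{R10a} --- so the comparison is with the argument there and with the toolkit the paper itself assembles around it.

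The genuine gap is in the ``only if'' direction of the first assertion, which is the entire content of the proposition, and the mechanism you propose there does not work. Indecomposability tells you that for \emph{any} nondegenerate arc $I$ (in particular one inside $T_H$), the $\FN$-translates of $I$ chain up to cover $T$; but those are translates by arbitrary elements of $\FN$, so this says nothing about the index of $H$, and your attempt to convert it into a coset count founders on a false finiteness claim: in a tree with dense orbits it is simply not true that only finitely many translates $gT_H$ meet a fixed finite supporting subtree $K$ nondegenerately (translates of any nondegenerate subtree accumulate everywhere; compare Proposition \ref{P.NonGeomFamily}, which is devoted to exactly this phenomenon). Likewise, quasi-convexity of $H$ in the word metric gives no ``convex cocompactness'' in an $\mathbb{R}$-tree with dense orbits --- there is no properness to transfer. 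The correct route, and the one the surrounding lemmas point to, runs the other way: assuming $H$ is finitely generated of infinite index and $H \curvearrowright T_H$ is non-simplicial, apply Lemma \ref{L.GoodGraph} to extract a finitely generated infinite-index subgroup $H' \leq H$ whose minimal tree $Y$ is nondegenerate, proper, and has dense orbits, and then invoke the main technical result of \cite{R10a} (the general-subgroup analogue of Lemma \ref{L.FactorFamilies}) to conclude that $\{gY\}_{g \in \FN}$ is a \emph{transverse family}, contradicting Lemma \ref{L.IndecompChar}. Proving that transversality statement --- that distinct translates of $Y$ meet in at most a point --- is the hard step, carried out in \cite{R10a} via geometric resolutions and band-complex (Imanishi/Rips) arguments, and nothing in your sketch substitutes for it; the case you flag as the ``main obstacle'' is precisely this missing argument, not a technicality to be patched.
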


The proposition essentially says that the only dynamically interesting subactions of an indecomposable actions are virtually the action itself.  We get the following consequence:

\begin{lemma}\label{L.GeomResidual}
 Let $T \in \partial \cvn$ have dense orbits, and suppose that $T$ is geometric.  If $\mathscr{Y}$ is a transverse family in $T$ that is not a transverse covering of $T$, then the residual of $\mathscr{Y}$ in $T$ is non-degenerate.
\end{lemma}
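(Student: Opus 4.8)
The plan is to prove the contrapositive: assuming that the residual $T \smf \mathscr{Y}$ is degenerate --- i.e.\ that every non-degenerate arc of $T$ meets some member of $\mathscr{Y}$ in a non-degenerate subarc --- I will show that $\mathscr{Y}$ is a transverse covering of $T$, contradicting the hypothesis. The key input is Corollary~\ref{C.GeomGraph}: since $T$ has dense orbits and is geometric, it admits a transverse covering $\mathscr{Z}=\{Z_w\}_{w}$ by indecomposable subtrees. The idea is that indecomposability prevents any $Z_w$ from being properly ``cut'' by $\mathscr{Y}$, so each $Z_w$ must lie inside a single member of $\mathscr{Y}$; the covering property of $\mathscr{Z}$ then transfers to $\mathscr{Y}$.

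The conceptual core is the following claim: \emph{if $Y\in\mathscr{Y}$ meets $Z_w$ non-degenerately, then $Z_w\subseteq Y$, and at most one member of $\mathscr{Y}$ contains $Z_w$.} By the observation following Lemma~\ref{L.CommonRefinement}, the members of $\mathscr{Z}\wedge\mathscr{Y}$ contained in $Z_w$ --- namely the non-degenerate intersections $Y\cap Z_w$, $Y\in\mathscr{Y}$ --- constitute a transverse family for the action $\Stab(Z_w)\curvearrowright Z_w$. If one of these intersections were a \emph{proper} subtree of $Z_w$, then, since a member $Y_1\in\mathscr{Y}$ with $Z_w\subseteq Y_1$ would meet any other such $Y_0$ in the non-degenerate set $Y_0\cap Z_w$ and hence coincide with it, \emph{all} members of that family would be proper subtrees of $Z_w$; this is a non-trivial transverse family in the indecomposable tree $Z_w$, contradicting Lemma~\ref{L.IndecompChar}. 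Hence every non-degenerate $Y\cap Z_w$ equals $Z_w$, which is the first assertion, and the uniqueness follows because two distinct members of $\mathscr{Y}$ containing $Z_w$ would intersect in $Z_w$. Now the degenerate-residual hypothesis enters: choosing a non-degenerate arc $I\subseteq Z_w$ (possible, as indecomposable trees are non-degenerate), $I$ and hence $Z_w$ meets some $Y\in\mathscr{Y}$ non-degenerately, so $Z_w\subseteq Y=:Y_{(w)}$. Given any finite arc $J\subseteq T$, the covering $\mathscr{Z}$ yields $Z_{w_1},\dots,Z_{w_k}$ with $J\subseteq Z_{w_1}\cup\cdots\cup Z_{w_k}\subseteq \overline{Y}_{(w_1)}\cup\cdots\cup\overline{Y}_{(w_k)}$, and since the closures of the members of $\mathscr{Y}$ again form a transverse family $\overline{\mathscr{Y}}$, this shows $\overline{\mathscr{Y}}$ is a transverse covering of $T$.

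The step I expect to be the main obstacle is the passage from $\overline{\mathscr{Y}}$ back to $\mathscr{Y}$ itself: one must show that a degenerate residual forces each $Y\in\mathscr{Y}$ to be closed, so that $\mathscr{Y}=\overline{\mathscr{Y}}$ is a transverse covering and the contradiction is reached. The work above gives $Y=\bigcup\{Z_w:Z_w\subseteq Y\}$ (every point of $Y$ lies on a non-degenerate arc in $Y$, which is covered by finitely many $Z_w$, each necessarily contained in $Y$). To see that this union is closed, I would consider a putative $y\in\overline{Y}\setminus Y$, note that $y$ lies in some $Z_{w_0}$ with $Z_{w_0}\cap Y$ degenerate, and then use completeness of $T$ together with the fact that distinct members of the transverse covering $\mathscr{Z}$ are separated by bridges to argue that the bridge between $Y$ and $Z_{w_0}$ has length zero, whence $y\in Y$ --- a contradiction. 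Making this bridge argument precise without assuming in advance that $Y$ is closed is the delicate part; if instead one adopts throughout the (harmless) normalization that transverse families have closed members --- legitimate, since closures of a transverse family form a transverse family --- then $\mathscr{Y}=\overline{\mathscr{Y}}$ automatically and the proof is complete with the previous paragraph.
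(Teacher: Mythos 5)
Your proposal is correct and is essentially the paper's own argument: both rest on the transverse covering of $T$ by indecomposable subtrees from Corollary \ref{C.GeomGraph}, combined with Lemma \ref{L.CommonRefinement} and Lemma \ref{L.IndecompChar}, to show that any indecomposable piece meeting a member of $\mathscr{Y}$ non-degenerately is swallowed by it, and then transfer the finite covering property of the indecomposable family to $\mathscr{Y}$. Your closure caveat is also the paper's (the remark following the lemma concludes only that $\overline{\mathscr{Y}}$ is a transverse covering), so the normalization to closed members is the right resolution; the sketched bridge argument should simply be dropped, since a minimal tree in $\partial \cvn$ with dense orbits is never complete (as observed in the Appendix).
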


In particular, if $\mathscr{Y}$ is a transverse family in a geometric tree $T$ such that $T \smf \mathscr{Y}$ contains no non-degenerate arc, then $\overline{\mathscr{Y}}$ is a transverse covering for $T$.

\begin{proof}
 Let $\mathscr{X}$ be the transverse covering of $T$ by indecomposable subtrees given by Corollary \ref{C.GeomGraph}.  Lemma \ref{L.CommonRefinement} gives that $\mathscr{X} \wedge \mathscr{Y}$ is a transverse family, and for any $X \in \mathscr{X}$, the collection of members of $\mathscr{X} \wedge \mathscr{Y}$ contained in $X$ is a transverse family for $Stab(X) \curvearrowright X$.  On the other hand, Lemma \ref{L.IndecompChar} gives that the elements of $\mathscr{X}$ contain no transverse families.  It follows that every member of $\mathscr{Y}$ is a union of members of $\mathscr{X}$.  To conclude, we just need to see that every proper transverse family contained in $\mathscr{X}$ has non-degenerate residual, but this is obvious for any transverse covering.
\end{proof}

\section{Reducing Systems}\label{S.Reduce}


For a subtree $Y \subseteq T \in \partial \cvn$, say that $Y$ \emph{generates a transverse family} if the collection of translates $\{gY\}_{g \in \FN}$ is a transverse family.  

\begin{defn}
A \emph{reducing system} for $T$ is a pair $\mathcal{R}=(\mathcal{D},\mathcal{P})$ satisfying the following:
\begin{enumerate}
 \item [(i)] $\mathcal{D}$ is a collection of conjugacy classes non-trivial, proper, finitely generated subgroups of $\FN$.  For any $H \in \mathcal{D}$, $T_H$ is not a point, the action of $H$ on $T_H$ has dense orbits, and $T_H$ generates a transverse family,
 \item [(ii)] $\P$ is a collection of conjugacy classes of non-trivial, proper, finitely generated subgroups of $\FN$.  For any $K \in \P$, $K$ fixes a point of $T$.
\end{enumerate}
\end{defn}

The abuse of language in the definition is without ill consequence; even though $T_H$ is meaningless for a conjugacy class of subgroups $H$, any two representatives of this conjugacy class give  minimal trees that equivariantly isometric.  We will continue with this imprecise language, since it we feel there is little chance for confusion.  Additionaly for conjugacy classes of subgroups $[H]$ and $[K]$ we will say that $[H]$ \emph{is contained in} $[K]$ if there is $H' \in [H]$ with $H \leq K$; and we will say that $[H]$ (non-trivially) \emph{intersects} $[K]$ if there is $H' \in [H]$ such that $H' \cap K \neq \{1\}$.  We will also take liberties in dropping the ``$[\cdot]$'' when referring to conjugacy classes of subgroups.

A reducing system $\mathcal{R}$ is \emph{trivial} if both $\D$ and $\P$ are empty; otherwise $\R$ is \emph{non-trivial}.  We call the elements of $\D$ \emph{dynamical subgroups}, and we call the elements of $\P$ \emph{peripheral subgroups}; more generally, we call the elements of $\D \cup \P$ \emph{reducing subgroups}.  Note that if $H \leq \FN$ has finite index, then $T_H=T$; since elements of a transverse family must be proper subtrees of $T$, it must be the case that every element of a reducing system has infinite index in $\FN$.  It is possible to obtain a canonical collection of peripheral subgroups from the following result of Jiang, which is classical for the subject; more information was given by \cite{GJLL98}.  

\begin{lemma}\label{L.CanonicalPeripheral}\cite{Ji91}
 Let $T \in \partial \cvn$ have dense orbits.  There are finitely many orbits of points of $T$ with non-trivial stabilizer, and the rank of every point stabilizer $T$ is at most $N$.  
\end{lemma}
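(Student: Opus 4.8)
The plan is to derive both assertions from the \emph{index inequality} for $\FN$--trees with trivial arc stabilizers. Since $T$ has dense orbits and is very small, its arc stabilizers are trivial by \cite{LL03}, so the only nontrivial stabilizers are point stabilizers, and $G_x := \Stab(x)$ is a free group for each $x \in T$. Granting for the moment that each $G_x$ is finitely generated, write $v(x)$ for the number of $G_x$--orbits of directions at $x$ (finite because $T$ is very small), and set
$$\operatorname{ind}(x) := \max\bigl\{\,0,\ 2\operatorname{rk}(G_x) + v(x) - 2\,\bigr\}, \qquad \operatorname{ind}(T) := \sum_{[x]} \operatorname{ind}(x)\ \in[0,\infty],$$
the sum running over $\FN$--orbits of points of $T$. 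The key input is the estimate $\operatorname{ind}(T) \le 2N - 2$.

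For geometric trees this estimate is the geometric index theorem of Gaboriau--Levitt (proved by cutting the band complex into simplicial, surface, and Levitt pieces and computing the index of each). For a general very small tree with dense orbits, such as our $T$, it follows by approximation: fixing a basis and a finite supporting subtree $K \subseteq T$, one forms the geometric resolution $f_K : T_K \to T$ of Section~4, has $\operatorname{ind}_{\mathrm{geom}}(T_K) \le 2N-2$, and checks that the equivariant collapse map $f_K$ does not increase the index (collapsing subtrees can only merge directions and enlarge point stabilizers in a controlled way). This is essentially the argument carried out in \cite{GJLL98}.

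Given $\operatorname{ind}(T) \le 2N - 2$, both statements are immediate counting consequences. If $x$ has nontrivial stabilizer then $\operatorname{rk}(G_x) \ge 1$ and $v(x) \ge 1$, so $\operatorname{ind}(x) \ge 1$; since the $\operatorname{ind}(x)$ are non-negative and sum to at most $2N-2$, there are at most $2N-2$ orbits of points with nontrivial stabilizer, and in particular the sum defining $\operatorname{ind}(T)$ has only finitely many nonzero terms. Likewise, a point stabilizer of rank $r$ contributes at least $2r - 2$ to $\operatorname{ind}(T)$, whence $2r - 2 \le 2N - 2$, i.e. $r \le N$.

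The genuine content, and the step I expect to be the main obstacle, is the parenthetical hypothesis that point stabilizers are finitely generated; without it $\operatorname{ind}(T)$ is not even defined. I would establish it together with the index bound, following \cite{GJLL98}: approximating $T$ by geometric trees $T_n$ (which have finitely generated point stabilizers and satisfy $\operatorname{ind}_{\mathrm{geom}}(T_n) \le 2N-2$) and controlling how branch points and point stabilizers can merge in the limit, one shows that every finitely generated subgroup of $G_x$ has rank at most $N$; since $G_x$ is free, this forces $\operatorname{rk}(G_x) \le N$, and in particular $G_x$ is finitely generated. Making the limiting/semicontinuity step precise is the delicate part; the rest is the bookkeeping indicated above.
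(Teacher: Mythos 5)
The paper does not prove this lemma at all: it is quoted from Jiang \cite{Ji91}, with the index-theoretic refinement attributed to \cite{GJLL98}, so there is no in-paper argument to compare against. Your route is exactly the one behind those citations: the Gaboriau--Levitt/GJLL index inequality $\operatorname{ind}(T)\le 2N-2$ for very small trees with dense orbits (hence trivial arc stabilizers), from which the finiteness of orbits of points with non-trivial stabilizer and the bound $\operatorname{rk}(\Stab(x))\le N$ follow by the counting you give, which is correct. The one imprecise point is the assertion that the resolution map $f_K\colon T_K\to T$ ``does not increase the index'': a non-exact resolution involves folding, which can merge orbits of branch points and enlarge point stabilizers, so the honest argument is the semicontinuity/visibility one (any finite collection of branch-point orbits and finitely generated subgroups of their stabilizers in $T$ is already visible in $T_K$ for $K$ large, so the partial index sums are bounded by $2N-2$); since you flag exactly this limiting step as the delicate part and defer it to \cite{GJLL98}, your proposal is a faithful sketch of the standard proof rather than a gap, and it sits at the same level of detail as the paper's own treatment, which simply cites the result.
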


The collection of peripheral subgroups given by Lemma \ref{L.CanonicalPeripheral} is canonical in the sense that any peripheral reducing subgroup is contained in one of them.  The issue of understanding potential elements of $\D$ and their minimal trees is more difficult and is our main focus. 

It should be noted that simply finding a subgroup $H$ such that $T_H$ has dense orbits and generates a transverse family is not completely satisfactory; indeed, replacing $H$ with any of its finite index subgroups will give the same situation, so we should look for such subgroups $H$ that are at least maximal with respect to some mixing condition.  

\begin{lemma}\label{L.Commensurator}
 Let $H \leq \FN$ be fintely generated, and suppose that $H$ does not fix a point of $T$ and that $T_H$ has dense orbits.  Then $H$ has finite index in $Stab(T_H)$; in particular, $Stab(T_H)$ is finitely generated.
\end{lemma}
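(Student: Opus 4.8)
The plan is to show that $\Stab(T_H)$ normalizes $H$ up to finite index by exploiting the fact that a subgroup with dense orbits on its minimal tree has a \emph{canonical} minimal subtree, and then to quote the standard fact that a malnormal-type rigidity holds for such actions. More precisely, let $G = \Stab(T_H)$. Since $H$ is finitely generated and does not fix a point of $T$, $H$ contains a hyperbolic element, so $T_H$ is the union of axes of hyperbolic elements of $H$ (as recalled in the Basic Notions section), and by the very-small/dense-orbits hypothesis arc stabilizers in $T$ are trivial (by \cite{LL03}). First I would observe that for any $g \in G$, conjugation gives $g T_{H} = T_{gHg\inv}$, and since $gT_H = T_H$ we get that $gHg\inv$ and $H$ have the \emph{same} minimal subtree $T_H$ inside $T$. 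So the whole question reduces to: how many finitely generated subgroups of $\FN$ can share a fixed minimal tree $Y := T_H$ with dense orbits, and are they all commensurable?

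The key step is to bound the ``group of the tree'' $\Stab(Y)$ directly. I would argue as follows. Because $H \curvearrowright Y$ has dense orbits and $Y$ is a very small $\FN$-tree restricted to $H$ with trivial arc stabilizers, the stabilizer in $\FN$ of a point of $Y$ is trivial or maximal cyclic, and by Lemma \ref{L.CanonicalPeripheral} applied to the action $H \curvearrowright Y$ there are only finitely many $H$-orbits of points with nontrivial stabilizer, each of rank $\le N$. Now pick a finite supporting subtree $K \subseteq Y$ for the action $H \curvearrowright Y$ (which exists since $H$ is finitely generated and $Y$ is $H$-minimal), and for each $g \in G$ note that $gK$ is again a finite subtree of $Y$, so $gK \subseteq h_1 K \cup \dots \cup h_k K$ for some $h_i \in H$. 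The hard part will be upgrading this ``$G$-translates of $K$ are covered by finitely many $H$-translates of $K$'' into an honest index bound: I expect to do this by a volume/co-volume argument, comparing the quotient graph of actions $Y/H$ with $Y/G$. Concretely, $Y/H$ is a finite metric graph (of groups) of total length $\ell < \infty$ because $H$ is finitely generated with $Y$ minimal; the covering $Y/H \to Y/G$ is then a finite-sheeted branched cover of finite metric graphs, so $[G:H] = \mathrm{vol}(Y/H)/\mathrm{vol}(Y/G) < \infty$. This also immediately yields that $G$ is finitely generated, since a finite-index overgroup of a finitely generated group is finitely generated, which gives the ``in particular'' clause.

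The main obstacle I anticipate is making the co-volume comparison rigorous in the dense-orbits (non-simplicial) setting, where $Y/H$ is not literally a finite graph but a ``foliated'' or metric object; the clean way around this is to pass to the geometric picture — take an exact resolution $f_K \colon Y_K \to Y$ of the $H$-action (or reduce to the geometric case via Levitt's results), work with the band complex $\mathscr{K}$ for $H$ acting on $Y$, and observe that the total transverse measure (Lebesgue length) of $\mathscr{K}$ is finite and that each $g \in G$ induces a measure-preserving equivalence of $\mathscr{K}$ into a finite union of copies of itself. An alternative, possibly slicker route that avoids geometricity altogether: show directly that the commensurator $\mathrm{Comm}_{\FN}(H)$ equals $\Stab(T_H)$ using that two finitely generated subgroups of $\FN$ with the same limit set in $\partial \FN$ are commensurable (here the limit set of $H$ is determined by $T_H$ via $L(T_H)$ and the endpoints of axes), and then invoke the standard fact that a finitely generated subgroup of a free group has finite index in its commensurator. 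I would present the geometric co-volume argument as the main proof and mention the commensurator viewpoint as a remark.
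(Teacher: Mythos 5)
There is a genuine gap in the co-volume argument, which is your main proposed route. You want to compare $\mathrm{vol}(Y/H)$ with $\mathrm{vol}(Y/G)$ and read off $[G:H]$ as a ratio of volumes, treating $Y/H \to Y/G$ as a finite-sheeted branched cover of finite metric graphs. But in the dense-orbits regime, which is exactly the case covered by the lemma, $Y/H$ is not a finite metric graph at all: since $H$-orbits are dense in $Y = T_H$, the quotient pseudometric on $Y/H$ is identically zero, so there is no meaningful notion of total length of the quotient, and no covering-of-graphs picture to exploit. You anticipate this obstruction and propose to fix it by passing to a band complex for $\mathscr{K}$; but notice that setting up a band complex for the $G$-action already requires that $G$ be finitely generated and that one have a finite supporting subtree for the $G$-action -- which is precisely the ``in particular'' half of the statement you are trying to prove. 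So that repair is circular. Your alternative route via commensurators has the same issue (you invoke the fact that a finitely generated subgroup of a free group has finite index in its commensurator, but you must first know $\mathrm{Comm}_{\FN}(H) = \Stab(T_H)$ is finitely generated, or at least that the containment goes the right way), and the limit-set-equality step is asserted rather than argued.

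The paper avoids all of this by a short reduction. It takes an \emph{arbitrary} finitely generated subgroup $K$ with $H \leq K \leq \Stab(T_H)$ and invokes the technical result from \cite{R10a}: if $L$ is finitely generated of infinite index in a group acting very small on a tree $Y$, and $Y_L$ has dense orbits, then $Y_L$ is a \emph{proper} subtree of $Y$. Applying this with $Y = T_K$ (a very small $K$-tree, since being very small passes to subgroups acting on invariant subtrees) and $L = H$, one sees that if $[K:H]$ were infinite then $T_H \subsetneq T_K$; but $K \leq \Stab(T_H)$ forces $T_K \subseteq T_H$, a contradiction. Hence $H$ has finite index in every such $K$, and an ascending-chain argument (finite-index overgroups in a free group have rank $\le \mathrm{rk}(H)$, and ascending chains of bounded rank stabilize) then gives that $\Stab(T_H)$ itself is finitely generated with $H$ of finite index in it. Your first observation, that $gHg^{-1}$ has the same minimal tree $T_H$ for $g \in \Stab(T_H)$, is correct but not used in the paper's argument; the essential missing ingredient in your proposal is the properness result from \cite{R10a}, or something playing its role, which lets one avoid any geometry of quotients.
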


Although we will not need it, we note that Lemma \ref{L.Commensurator} holds without restricting $T_H$ to have dense orbits; one uses the Stalling folding machinery to handle the case that $T_H$ is simplicial, and the general case follows easily from this and Lemma \ref{L.Commensurator} using Lemma \ref{L.GoodGraph}.  

\begin{proof}
As $H$ does not fix a point of $T$, $T_H$ is infinite and is the union of axes of hyperbolic elements of $H$.  Let $K \leq Stab(T_H)$ be any finitely generated subgroup of $Stab(T_H)$ that contains $H$.  It was noted in \cite{R10a} that if $L \leq \FN$ is finitely generated with has infinite index, then for any very small tree $Y$, if $Y_L$ has dense orbits, then $Y_L$ is a proper subtree of $Y$.  Hence, if $H$ had infinite index in $K$, then $T_H$ would be a proper subtree of $T_K$, but $K \subseteq Stab(T_H)$.  So, $H$ has finite index in $K$.

\end{proof}

In light of Lemma \ref{L.Commensurator}, it makes sense to adopt the convention that if $H \leq \FN$ is finitely generated and dynamically reduces $T$, then we assume that $H=Stab(T_H)$.

We have:

\begin{cor}\label{C.FactorStab}
 If $F \leq \FN$ is a free factor, then $F=Stab(T_F)$.  
\end{cor}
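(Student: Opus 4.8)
The plan is to read this off Lemma~\ref{L.Commensurator}, combined with the elementary fact that a free factor of a free group cannot be a proper subgroup of finite index in any subgroup of that free group. I would treat the substantive case, where $F$ does not fix a point of $T$ (when $F$ is elliptic, $T_F$ is a single point). So: $F$ contains a hyperbolic element, $T_F$ is the union of the axes of the hyperbolic elements of $F$, and I put $K:=\Stab(T_F)$, a subgroup of the free group $\FN$ — hence itself free — containing $F$.

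The first step is to see that $[K:F]<\infty$. If $F$ acts on $T_F$ with dense orbits, Lemma~\ref{L.Commensurator} gives this directly; in general (for instance when $T$ is indecomposable, in which case a proper free factor acts simplicially on its minimal subtree by Proposition~\ref{P.Indecomp}) one uses instead the density-free strengthening of Lemma~\ref{L.Commensurator} recorded in the remark following it, which yields the same conclusion. The second step uses that $F$ is a free factor: fix a retraction $r\colon\FN\to F$. Since $F=r(F)\subseteq r(K)\subseteq F$, the restriction $r|_K\colon K\to F$ is surjective and is the identity on $F$, so $M:=\ker(r|_K)$ is a normal subgroup of $K$ with $M\cap F=\{1\}$ and $MF=K$; thus $K=F\ltimes M$ and $[K:F]=[MF:F]=[M:M\cap F]=|M|$. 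As $[K:F]<\infty$, the group $M$ is a finite subgroup of the torsion-free group $K$, so $M=\{1\}$ and $K=F$, which is the assertion.

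I do not expect a real obstacle: the statement is genuinely a corollary, and the work lies only in assembling the pieces correctly. The one point that must be handled with care is that ``$F$ is a free factor'' has to be invoked through the retraction rather than through any rank comparison — a subgroup of a free group can perfectly well be a proper finite-index overgroup of some non-primitive cyclic subgroup — and one must remember to use the density-free form of Lemma~\ref{L.Commensurator}, since the minimal subtree of a proper free factor is typically simplicial and not of dense-orbit type.
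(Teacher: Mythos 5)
Your proof is correct and takes essentially the same route as the paper, whose entire proof reads ``$F$ has no finite extensions in $\FN$; apply Lemma~\ref{L.Commensurator}.'' Your retraction argument is a correct unpacking of the first clause, and your appeal to the density-free strengthening of Lemma~\ref{L.Commensurator} (recorded in the remark following it) is the right way to read the second clause when $T_F$ is simplicial.
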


\begin{proof}
 $F$ has no finite extensions in $\FN$; apply Lemma \ref{L.Commensurator}.
\end{proof}

We collect more technical facts that allow us to find dynamically reducing subgroups.  If $T \in \partial \cvn$ has dense orbits and if $H$ carries a leaf of $L(T)$, then either some non-trivial element of $H$ fixes a point of $T$ or else the action $H \curvearrowright T_H$ is not discrete.  In the latter case, Lemma \ref{L.GoodGraph} gives that $T_H$ has a transverse covering by trees with dense orbits and simplicial edges.  Since arc stabilizers in $T$ are trivial, this splitting of $T_H$ is free, and so there is a free factor $H'$ of $H$ that acts with dense orbits on its minimal subtree.  If additionally $T_H$ generates a transverse family in $T$, then $T_{H'}$ will generate a transverse family in $T$ as well, since $T_{H'}$ generates a transverse family in $T_H$; in other words, $H'$ dynamically reduces $T$.  We now focus on the collection of free factors of $\FN$; more general reducing systems will be treated in later work if they are needed for applications.

\subsection{Factor Reducing Systems}

With a view toward the complex of free factors, we introduce a special class of reducing systems.  A \emph{factor reducing system} for $T \in \partial \cvn$ is a reducing system $\R=(\D,\P)$, where each element of $\D \cup \P$ is a free factor of $\FN$.  We begin by recalling some standard facts about free factors of $\FN$:

\begin{lemma}\label{L.FactorProperties}
 Let $F,F' \leq \FN$ be free factors.
\begin{enumerate}
 \item [(i)] $F \cap F'$ is a free factor of both $F$ and $F'$,
 \item [(ii)] any free factor of $F$ is a free factor of $\FN$,
 \item [(iii)] free factors satisfy the ascending and descending chain conditions.
\end{enumerate}
\end{lemma}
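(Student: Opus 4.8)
The plan is to take the three items in the order (ii), (i), (iii): statement (ii) is purely formal, statement (i) is the only substantive point, and statement (iii) follows from (i) together with an elementary rank count. I expect (i) to be the main obstacle, although it too is classical.

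\emph{Statement (ii).} Write $\FN = F * A$ for some complement $A$, and $F = F' * B$ for some complement $B$ of $F'$ in $F$. Reassociating the free product gives $\FN = F' * B * A$, so $F'$ is a free factor of $\FN$. Nothing further is needed.

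\emph{Statement (i).} I would deduce this from the Kurosh subgroup theorem (equivalently, from Bass--Serre theory applied to the free splitting $\FN = F * A$ for any complement $A$ of $F$). Applied to $F' \leq \FN = F * A$, the Kurosh theorem writes $F'$ as a free product of a free group together with subgroups of the form $F' \cap gFg\inv$ and $F' \cap gAg\inv$, where $g$ ranges over suitable double-coset representatives; the trivial coset contributes the factor $F' \cap F$ (and if $F' \cap F = \{1\}$, then $F' \cap F$ is a free factor of $F'$ trivially). Hence $F' \cap F$ is a free factor of $F'$, and exchanging the roles of $F$ and $F'$ shows it is a free factor of $F$ as well. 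Finite generation of $F \cap F'$ is automatic, since it is a free factor of the finitely generated group $F'$. The one place to write carefully is the bookkeeping guaranteeing that $F' \cap F$ appears as one of the listed factors rather than being absorbed into the free part; this is exactly the standard form of the Kurosh theorem, but it is the step I would spell out.

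\emph{Statement (iii).} First record that a proper free factor $F'$ of a finitely generated free group $F$ has strictly smaller rank: from $F = F' * B$ with $B \neq \{1\}$ one gets $\mathrm{rk}(F) = \mathrm{rk}(F') + \mathrm{rk}(B) > \mathrm{rk}(F')$. Now let $F_1 \subseteq F_2 \subseteq \cdots$ be an ascending chain of free factors of $\FN$. By (i), each $F_i = F_i \cap F_{i+1}$ is a free factor of $F_{i+1}$, so $\mathrm{rk}(F_i) \leq \mathrm{rk}(F_{i+1}) \leq N$; a non-decreasing sequence of integers bounded by $N$ is eventually constant, and once $\mathrm{rk}(F_i) = \mathrm{rk}(F_{i+1})$ the inclusion $F_i \subseteq F_{i+1}$ cannot be proper, so the chain stabilizes. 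The descending case is the mirror image: if $F_1 \supseteq F_2 \supseteq \cdots$, then (i) makes $F_{i+1}$ a free factor of $F_i$, the ranks are non-increasing and non-negative, hence eventually constant, and the chain again stabilizes.
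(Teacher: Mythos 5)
Your proof is correct. The paper records this lemma without proof, as a recollection of standard facts, so there is no argument in the text to compare against; your route — reassociating free product decompositions for (ii), the Kurosh subgroup theorem applied to $F' \leq \FN = F * A$ (with the identity double coset contributing the factor $F' \cap F$) for (i), and the rank count showing a proper free factor has strictly smaller rank for (iii) — is exactly the standard argument these "well-known facts" rest on, and the one step you flag, the bookkeeping that $F' \cap F$ genuinely appears as a Kurosh factor rather than being absorbed into the free part, is indeed the only place requiring care and is handled by the usual Bass--Serre/Kurosh statement.
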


Our study of factor reducing systems is simplified by the following result, which is the main technical lemma from \cite{R10a}.

\begin{lemma}\label{L.FactorFamilies}
 Let $T \in \partial \cvn$ have dense orbits, and let $F' \leq \FN$ be a non-trivial, proper free factor.  If $T_{F'}$ has dense orbits, then $T_{F'}$ generates a transverse family.
\end{lemma}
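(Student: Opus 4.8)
The plan is to show that if $T_{F'}$ has dense orbits for a non-trivial proper free factor $F'$, then the orbit $\{g T_{F'}\}_{g \in \FN}$ is a transverse family; that is, whenever $g T_{F'} \neq T_{F'}$, the intersection $g T_{F'} \cap T_{F'}$ is at most a point. So suppose toward a contradiction that $I = g T_{F'} \cap T_{F'}$ is a non-degenerate arc for some $g$ with $g T_{F'} \neq T_{F'}$, equivalently $g F' g^{-1} \neq F'$ as a subgroup (using Corollary \ref{C.FactorStab}, which says $F' = \Stab(T_{F'})$, so distinct translates of the minimal subtree correspond to non-conjugate-by-the-element subgroups). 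Since arc stabilizers in $T$ are trivial ($T$ has dense orbits), this shared arc $I$ has trivial stabilizer, so it is a ``free'' overlap, and the goal is to build a subtree whose stabilizer is a free factor that is strictly larger than $F'$ but still has dense orbits on its minimal tree, contradicting some maximality, or else to produce a transverse family inside an indecomposable piece.

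The key technical device should be a resolution/geometric approximation argument in the spirit of \cite{R10a} and the geometric machinery recalled above. First I would reduce to the geometric case: approximate $T$ by geometric trees $T_K$ resolving $T$ via a finite supporting subtree $K$ chosen large enough to contain arcs detecting the overlap $I$ and fundamental domains for the actions of $F'$ and $gF'g^{-1}$ on their minimal subtrees. In the geometric tree $T_K$, the subtree $(T_K)_{F'}$ resolves $T_{F'}$; since $T_{F'}$ has dense orbits, I would want $(T_K)_{F'}$ to have dense orbits as well for $K$ suitably chosen, so by Corollary \ref{C.GeomGraph} it admits a transverse covering by indecomposable subtrees, and likewise for its $g$-translate. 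The overlap $I$ then lies in an indecomposable piece $X$ of the covering of $(T_K)_{F'}$ and simultaneously in an indecomposable piece $X'$ of the covering of $(gT_K)_{F'} = g(T_K)_{F'}$. Now $X$ and $X'$ share a non-degenerate arc but are not equal (since their stabilizers are incommensurable free factors by Lemma \ref{L.Commensurator} and Corollary \ref{C.FactorStab}), so $\{X, X'\}$ generates a non-trivial transverse family sitting inside an indecomposable subtree of $T_K$ — and by Lemma \ref{L.IndecompChar} an indecomposable tree contains no transverse family. That is the desired contradiction, provided the overlap survives into the geometric approximation.

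The main obstacle, and the step that requires real care, is controlling the passage between $T$ and its geometric resolutions $T_K$: I must ensure (a) that for $K$ large enough $(T_K)_{F'}$ genuinely has dense orbits rather than degenerating to something simplicial, and (b) that the overlap arc $I \subseteq T$ lifts to a non-degenerate overlap between $(T_K)_{F'}$ and $g(T_K)_{F'}$ in $T_K$ rather than being created only by the collapsing map $f_K$. Point (b) is where the resolution map $f_K : T_K \to T$ being close to an isometry on the relevant compact region is used — I would invoke the standard fact (from \cite{BF95}, \cite{Gui08}) that one can choose $K$ so that $f_K$ is injective on a prescribed finite subtree, in particular on a subtree containing a fundamental domain for the overlap. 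For point (a), the relevant input is Lemma \ref{L.GoodGraph} together with the observation (already used in the text after Corollary \ref{C.FactorStab}) that if $T_{F'}$ is non-simplicial with dense orbits then its resolution cannot be purely simplicial for $K$ large, and one can pass to the maximal dense-orbit vertex action. An alternative route that sidesteps some of this, which I would pursue in parallel, is to argue directly on $T$ using Lemma \ref{L.CommonRefinement}: if $\{g T_{F'}\}$ fails to be a transverse family, apply the construction of Discussion \ref{D.BuildTF} to an overlap arc $I$ inside $T_{F'}$ to produce $Y_I$; one then shows $Y_I \cap T_{F'}$ is a proper $\Stab(Y_I) \cap F'$-invariant transverse family inside $T_{F'}$, and since $T_{F'}$ has dense orbits one can cite the indecomposability structure of dense-orbit trees to derive a contradiction with the maximality forced by Lemma \ref{L.FactorProperties}(i),(iii) on the chain of free factors involved. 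Either way, the crux is the same: a non-degenerate overlap between two translates of $T_{F'}$ would force a transverse family where indecomposability forbids one.
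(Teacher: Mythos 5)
First, a point of comparison: the paper does not prove Lemma \ref{L.FactorFamilies} at all --- it is imported as ``the main technical lemma from \cite{R10a}'' --- so there is no in-paper argument to measure you against; your sketch has to stand on its own, and as written it has two genuine gaps.

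The fatal one is the contradiction step. You suppose $gT_{F'}\cap T_{F'}$ contains an arc with $gT_{F'}\neq T_{F'}$, pass to a geometric resolution, locate indecomposable pieces $X$ of $(T_K)_{F'}$ and $X'$ of $g(T_K)_{F'}$ sharing an arc, and then say that $\{X,X'\}$ ``generates a non-trivial transverse family sitting inside an indecomposable subtree of $T_K$,'' contradicting Lemma \ref{L.IndecompChar}. But two distinct subtrees overlapping in a non-degenerate arc are precisely \emph{not} a transverse family --- a transverse family requires distinct members to meet in at most a point --- so no contradiction with indecomposability follows from the overlap itself. To invoke Lemma \ref{L.IndecompChar} you would have to exhibit, inside an indecomposable piece $Z$ of $T_K$, an invariant collection of proper non-degenerate subtrees with pairwise degenerate intersections; producing such a family from the minimal subtree of a subgroup is exactly the content of the lemma you are trying to prove, one level down, so the argument is circular at its crux. (There is also no argument that an indecomposable piece of $(T_K)_{F'}$, taken as an $F'$-tree, sits inside a single indecomposable piece of $T_K$ as an $\FN$-tree, and the stabilizer of such a piece need not be a free factor, so the ``incommensurable free factors'' remark does not apply.) Your fallback route via Discussion \ref{D.BuildTF} has the same problem: $Y_I$ may simply be all of $T$, and ``the indecomposability structure of dense-orbit trees'' is only available for geometric trees (Corollary \ref{C.GeomGraph}); citing Lemma \ref{L.FactorProperties} gives chain conditions, not a contradiction.

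The second gap is the transfer between $T$ and its resolutions, which you flag as points (a) and (b) but do not carry out --- and this is where most of the real work lies (compare how much effort Proposition \ref{P.NonGeomFamily} spends on an analogous transfer). The map $f_K$ folds, so a non-degenerate overlap of $T_{F'}$ and $gT_{F'}$ downstairs need not come from an overlap of $(T_K)_{F'}$ and $g(T_K)_{F'}$ upstairs: injectivity of $f_K$ on the canonical copy of $K$ does not help, because the points of $(T_K)_{F'}$ mapping onto your arc $I$ need not lie in that copy. Likewise, $(T_K)_{F'}$ may be simplicial or mixed even though $T_{F'}$ has dense orbits, and passing to a maximal dense-orbit vertex action changes the subtree whose translates you are tracking. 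Until these two steps are supplied --- a non-circular source of contradiction in the geometric case, and a genuine limiting/transfer argument for non-geometric $T$ --- the proposal is a plausible outline but not a proof.
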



Lemma \ref{L.FactorFamilies} along with the discussion following Corollary \ref{C.FactorStab} gives:

\begin{cor}\label{C.CarryToReduce}
 Let $T \in \partial \cvn $ have dense orbits; let $F \leq \FN$ be a proper factor, and assume that $F$ carries a leaf of $L(T)$.  If $T_F$ is simplicial, then there is a factor $F' \leq F$ that peripherally reduces $T$.  If $T_F$ is not simplicial, then there is a proper factor $F' \leq F$ such that $F'$ dynamically reduces $T$.
\end{cor}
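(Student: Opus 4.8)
The plan is to organize the argument along the very dichotomy in the statement, which is the dichotomy of Lemma~\ref{L.Carry}: since $T$ has dense orbits, arc stabilizers in $T$, and hence in the subtree $T_F$, are trivial, and since $F$ carries a leaf of $L(T)$, Lemma~\ref{L.Carry} gives that either some non-trivial element of $F$ fixes a point of $T$, or $F \curvearrowright T_F$ is not discrete; the latter happens exactly when $T_F$ is not simplicial. So ``$T_F$ simplicial'' forces the first alternative, and ``$T_F$ not simplicial'' forces the second.

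First I would dispose of the case where $T_F$ is simplicial. If $T_F$ is a point then $F$ itself fixes a point of $T$ and peripherally reduces $T$, so take $F' = F$. Otherwise $T_F$ is a non-degenerate minimal simplicial $F$-tree, and its edge stabilizers, being arc stabilizers of $T$, are trivial; thus $T_F$ is a free splitting of $F$. By Lemma~\ref{L.Carry} some non-trivial $g \in F$ is elliptic in $T$, hence fixes a vertex $v$ of $T_F$ (a minimal simplicial action has all elliptics fixing vertices). Put $F' = \Stab_F(v)$: this is non-trivial; it is a free factor of $F$ because vertex stabilizers of a free splitting are free factors (Grushko decomposition), hence a free factor of $\FN$ by Lemma~\ref{L.FactorProperties}(ii); it is proper in $\FN$ because $F$ is; it fixes $v \in T_F \subseteq T$, so it reduces $T$; and all of its elements are elliptic in $T$, so $T_{F'} = \{v\}$ and it does not dynamically reduce. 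Hence $F'$ peripherally reduces $T$.

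Next I would treat the case where $T_F$ is not simplicial, arguing by induction on the rank of $F$ (the rank-one case is vacuous, since then $T_F$ is a point or a line); here I use that a proper free factor has strictly smaller rank and that free factors satisfy the chain conditions (Lemma~\ref{L.FactorProperties}(iii)). If $T_F$ already has dense orbits, then, $F$ being a non-trivial proper free factor, Lemma~\ref{L.FactorFamilies} shows that $T_F$ generates a transverse family; as $T_F$ has more than one point, $F$ itself dynamically reduces $T$ and we may take $F' = F$. If instead $T_F$ does not have dense orbits, I would apply Levitt's structure theorem (Lemma~\ref{L.GoodGraph}) to $F \curvearrowright T_F$, obtaining a very small transverse covering of $T_F$ whose pieces are dense-orbit actions $H \curvearrowright Y_H$ or simplicial edges $E \curvearrowright e$. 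Triviality of arc stabilizers forces $E = \{1\}$ on the edge pieces, so the skeleton of this covering is a free splitting of $F$, and the free-splitting lemma of Section~3 (applied with $F$ in place of $\FN$) shows that the stabilizer $H$ of some dense-orbit piece lies in a proper free factor $F_1 \lneq F$. Since $Y_H \subseteq T_{F_1}$ is non-degenerate, indeed non-simplicial, $T_{F_1}$ is not simplicial; as $F_1$ has smaller rank than $F$, the induction hypothesis furnishes a proper free factor $F' \leq F_1 \leq F$ dynamically reducing $T$, completing the induction.

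The step I expect to be the main obstacle is ensuring that the transverse covering produced by Lemma~\ref{L.GoodGraph} really has a skeleton that is a free splitting of $F$ -- i.e.\ that it contains an edge with trivial stabilizer -- and that a dense-orbit vertex piece actually occurs; the first is precisely where triviality of arc stabilizers is used, and the second follows because $T_F$ is neither simplicial nor dense-orbit. Minor points to nail down are the equivalence of ``not discrete'' with ``not simplicial'' for $F \curvearrowright T_F$ (so that the two cases of the statement really exhaust Lemma~\ref{L.Carry}) and the fact that an elliptic element of a minimal simplicial action fixes a vertex; both are routine. Everything else is bookkeeping with free factors (Lemma~\ref{L.FactorProperties}) and with the transverse-family criterion (Lemma~\ref{L.FactorFamilies}).
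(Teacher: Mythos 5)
Your argument is correct and follows essentially the same route as the paper, whose proof is the one-line appeal to Lemma~\ref{L.FactorFamilies} together with the discussion after Corollary~\ref{C.FactorStab}: the dichotomy of Lemma~\ref{L.Carry}, Levitt's decomposition (Lemma~\ref{L.GoodGraph}) plus triviality of arc stabilizers to get a free splitting with a dense-orbit vertex piece, and then Lemma~\ref{L.FactorFamilies} to produce the transverse family. Your induction on rank (via the chain condition of Lemma~\ref{L.FactorProperties}) simply makes explicit the step the paper asserts in one sentence -- that some free factor of $F$ acts with dense orbits on its minimal subtree -- and your vertex-stabilizer argument fills in the peripheral (simplicial) case that the paper leaves implicit.
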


Our goal will be to understand minimal factors acting with dense orbits on their minimal subtrees and to characterize trees for which all factor reducing systems are trivial.  Before doing this, we will need more technical lemmas; our approach is aided by considering a simplified ergodic theory for trees.

\section{Measures on Trees}

The class of objects discussed here might be thought of as one dimensional measures on trees.  For $T \in \partial \cvn$, a \emph{measure} on $T$ is a collection $\mu=\{\mu_I\}_{I \subseteq T}$ of finite, positive Borel measures on the finite arcs $I \subseteq T$.  It is required that for $I \subseteq J \subseteq T$, one has compatibility with resprect to restriction: $\mu_J|_I=\mu_I$.  All of our trees come with an action of $\FN$, and we will want measures to be \emph{invariant}; this means that for $g \in \FN$ and for a Borel set $X \subseteq I$, we have that $\mu_I(X)=\mu_{gI}(gX)$.  Let $\mathcal{M}(T)$ denote the set of invariant measures on $T$; $\mathcal M(T)$ has an obvious $\mathbb{R}_{\geq 0}$-linear structure and is convex.  

Measures on trees were defined by Paulin in his habilitation thesis and were further studied by Guirardel in \cite{Gui98}, where they were used to obtain informaiton about the dynamics of $Out(\FN)$ acting on $\partial \cvn$.  The definition of measure given here is what one gets by transporting the notion of a transverse measure on a surface lamination to the dual tree.  We collect some results from \cite{Gui98}, to which the reader is referred for more details.

As $T$ is an $\mathbb{R}$-tree, every finite arc of $T$ is isometrically identified with a finite segment in $\mathbb{R}$ and, hence, comes with Lebesgue measure.  Since $\FN$ acts isometrically on $T$, the measure on $T$ corresponding to this collection of Lebesgue measures if invariant.  We denote this measure by $\mu_{amb}(T)$, or just $\mu_{amb}$ when $T$ is understood.  One should think of $\mu_{amb}$ as the ambient measure on $T$.  

Measures on trees are locally defined objects; to define the usual measure-theoretic notions here, one works locally on a finite supporting subtree.  Let $\mu$ be an invariant measure on $T$, and let $K \subseteq T$ be a finite supporting subtree.  As $K$ is a finite union of finite arcs, the measure $\mu$ induces a globally defined Borel measure on $K$, which we denote by $\mu |_K$.  Let $\Gamma(K)$ denote the collection of all partial isometries of $K$ got by restricting the action $\FN \curvearrowright T$ to Borel subsets of $K$; the collection $\Gamma(K)$ forms a sort of pseudo-group, where the usual restriction that the domains of the partial maps be open has been dropped, and this pseudo-group is generated by maximal restrictions of elements of $\FN$ to $K$.  As $\mu$ is $\FN$-invaraint, $\mu |_K$ is $\Gamma(K)$-invariant.  The following is an exercise in definitions.

\begin{lemma}\label{L.LocalMeasures}
 Let $T \in \partial \cvn$, and let $K \subseteq T$ be a finite supporting subtree.  With notation as above, $\FN$-invariant measures on $T$ are in one-to-one correspondence with finite, positive $\Gamma(K)$-invariant Borel measures on $K$.
\end{lemma}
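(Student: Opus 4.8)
The plan is to show that the two constructions — going from an $\FN$-invariant measure $\mu = \{\mu_I\}_{I \subseteq T}$ on $T$ to the globally defined Borel measure $\mu|_K$ on the finite supporting subtree $K$, and, conversely, going from a finite positive $\Gamma(K)$-invariant Borel measure $\nu$ on $K$ to an $\FN$-invariant measure on $T$ — are mutually inverse. One direction is essentially definitional: if $\mu \in \mathcal{M}(T)$, then since $K$ is a finite union of finite arcs and the $\mu_I$ are compatible under restriction, the measures $\mu_I$ for $I \subseteq K$ patch together (using a partition of $K$ into finitely many arcs and the compatibility condition on overlaps) to give a single finite positive Borel measure $\mu|_K$ on $K$; invariance of $\mu$ under $\FN$ immediately yields that $\mu|_K$ is invariant under every partial isometry in $\Gamma(K)$, since each such partial isometry is the restriction of some $g \in \FN$ to a Borel subset of $K$ with image again inside $K$. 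So the forward map is well defined and lands in the stated target.

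For the converse, suppose $\nu$ is a finite positive $\Gamma(K)$-invariant Borel measure on $K$. I would define, for an arbitrary finite arc $I \subseteq T$, a measure $\mu_I$ on $I$ as follows. Because $K$ is a supporting subtree, there are $g_1, \ldots, g_k \in \FN$ with $I \subseteq g_1 K \cup \cdots \cup g_k K$; subdividing $I$ at the finitely many endpoints of the subarcs $I \cap g_j K$, one writes $I$ as a finite concatenation of subarcs, each of which lies inside a single translate $g_j K$. On such a subarc $J \subseteq g_j K$, set $\mu_I(X) := \nu(g_j^{-1} X)$ for Borel $X \subseteq J$. The content of the argument is that this is independent of all the choices: if a subarc lies in both $g_j K$ and $g_l K$, then $g_l^{-1} g_j$ restricts to a partial isometry of $K$ (carrying the relevant Borel set into $K$), hence lies in $\Gamma(K)$, so $\Gamma(K)$-invariance of $\nu$ gives $\nu(g_j^{-1} X) = \nu(g_l^{-1} X)$. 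Summing over the pieces of a subdivision gives a well-defined $\mu_I$, and a further check — again reducing to $\Gamma(K)$-invariance of $\nu$ — shows that refining the subdivision does not change the answer, so $\mu_I$ depends only on $I$. Compatibility $\mu_J|_I = \mu_I$ for $I \subseteq J$ and $\FN$-invariance $\mu_I(X) = \mu_{gI}(gX)$ are then straightforward from the construction, since translating $I$ by $g$ just replaces the covering translates $g_j$ by $g g_j$.

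Finally I would verify that the two assignments are inverse to each other. Starting from $\mu \in \mathcal{M}(T)$, forming $\mu|_K$, and then running the converse construction returns $\mu$: on a subarc $J \subseteq g_j K$ the reconstructed measure assigns $X \mapsto (\mu|_K)(g_j^{-1} X) = \mu_{g_j^{-1}J}(g_j^{-1}X) = \mu_J(X)$ by $\FN$-invariance of $\mu$, and these agree with $\mu_I$ by compatibility. Conversely, starting from $\nu$ on $K$, the reconstructed $\FN$-invariant measure restricted to $K$ is $\nu$ itself, since we may take the trivial covering $K \subseteq K$ (i.e. $g_1 = 1$). I expect the only real subtlety — and the step deserving a sentence or two of care rather than a one-line dismissal — is the well-definedness of $\mu_I$ in the converse construction: one must check both independence of which translates $g_j$ are used to cover a given piece and independence of how finely $I$ is subdivided, and in each case the point is precisely that the ambiguity is measured by an element of the pseudo-group $\Gamma(K)$, under which $\nu$ is invariant by hypothesis. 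Everything else is bookkeeping with the compatibility and invariance axioms.
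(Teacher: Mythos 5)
Your proof is correct and is exactly the argument the paper intends: the paper leaves this lemma as an ``exercise in definitions,'' and your two mutually inverse constructions (restriction $\mu \mapsto \mu|_K$, and reconstruction of $\mu_I$ by covering $I$ with translates $g_jK$ and invoking $\Gamma(K)$-invariance to kill the ambiguity in the choice of translate and of subdivision) are the natural way to carry it out. The only point deserving the care you already flag is the bookkeeping at subdivision endpoints, since measures here may have atoms, so one should sum over a genuine Borel partition (e.g.\ half-open subarcs) rather than over overlapping closed subarcs.
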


It follows from invariance of $\mu$ that replacing $K$ with a translate $gK$ induces a conjugacy between the pseudo-groups $\Gamma(K)$ and $\Gamma(gK)$.  A subset $X \subseteq T$ is \emph{measurable} if $X$ meets every translate of $K$ in a Borel set.  The set $X$ has \emph{measure zero} if $X$ meets every translate of $K$ in a set of $\mu |_K$-measure zero, and $X$ is \emph{full measure} if it complement in $T$ has measure zero.  The \emph{support} of $\mu |_K$, denoted $Supp(\mu |_K)$, is the complement in $K$ of the largest open $\mu |_K$-measure zero subset.  Note that $Supp(\mu |_K)$ contains an isolated point if and only if $\mu$ has an atom.  

If $\mu'$ is another measure on $T$, then say that $\mu'$ is \emph{absolutely continuous} with respect to $\mu$ if $\mu'|_K$ is absolutely continuous with respect to $\mu |_K$, and say that $\mu'$ is \emph{singular} with respect to $\mu$ if $\mu'|_K$ is singular with respect to $\mu |_K$.  Finally, say that $\mu'$ is \emph{dominated} by $\mu$, written $\mu' \leq \mu$, if for any Borel set $X \subseteq K$, one has $\mu'|_K(X) \leq \mu |_K(X)$.  Note that if $\mu' \leq \mu$, then $\mu'$ is absolutely continuous with respect to $\mu$; conversely, if $\mu'$ is absolutely continuous with respect to $\mu$, then $\mu'$ is homothetic to a measure that is dominated by $\mu$.

\subsection{Ergodic Components}

A measure $\mu \in \mathcal{M}(T)$ is called \emph{ergodic} if any $\FN$-invariant measurable subset $X \subseteq T$ either has full measure or has zero measure; this is the usual definition of ergodicity when translated to the pseudo-group $\Gamma(K)$.  Most of the time, we are only concerned with ergodic measures up to global rescaling.  

Fix a finite supporting subtree $K \subseteq T \in \partial \cvn$.  Presently, we will be concerned with measures $\mu \in \mathcal M(T)$ such that $Supp(\mu |_K)$ contains a non-degenerate arc; this feature is evidently independent of the choice of $K$, so we will simply say that $\mu$ has \emph{non-degenerate support} in this case.  We get the following very simple observation.

\begin{lemma}\label{L.NonDegenerateSupports}
 Let $T \in \partial \cvn$ have dense orbits, and let $K \subseteq T$ be a finite supporting subtree.  Suppose that $\mu, \mu' \in \mathcal{M}(T)$ are ergodic and have non-degenerate support.  If $Supp(\mu |_K) \cap Supp(\mu' |_K)$ is non-degenerate, then $Supp(\mu |_K)=Supp(\mu' |_K)$.  
\end{lemma}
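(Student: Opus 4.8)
The plan is to work locally on the finite supporting subtree $K$, where Lemma \ref{L.LocalMeasures} lets us treat $\mu$ and $\mu'$ as finite positive $\Gamma(K)$-invariant Borel measures on $K$, and $\Gamma(K)$-ergodicity is the honest ergodicity of the pseudo-group action. The key structural input is that $\mathrm{Supp}(\mu|_K)$ is, up to a $\mu$-null set, a $\Gamma(K)$-invariant set: if $\gamma \in \Gamma(K)$ has domain $D$ and $B \subseteq D$ is Borel with $\mu|_K(B) = 0$, then invariance forces $\mu|_K(\gamma B) = 0$, so the complement of the support is (locally) invariant under the partial isometries, hence so is the support itself. The strategy is then: (1) show $\mathrm{Supp}(\mu|_K)$ determines a genuinely $\FN$-invariant measurable subset $\widehat{S}_\mu \subseteq T$ by saturating; (2) observe that the hypothesis ``$\mu$ has non-degenerate support'' plus density of orbits forces $\widehat{S}_\mu$ to have positive $\mu$-measure (it contains an arc, which has positive $\mu|_K$-mass since the support meets it in more than a null set); (3) apply ergodicity of $\mu'$ to the set $\widehat{S}_\mu \cap \widehat{S}_{\mu'}$ and ergodicity of $\mu$ to the same set, and conclude equality of the two supports.

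In more detail, I would first fix the arc $J \subseteq \mathrm{Supp}(\mu|_K) \cap \mathrm{Supp}(\mu'|_K)$ guaranteed by the hypothesis; this arc has strictly positive mass for both $\mu|_K$ and $\mu'|_K$, since a non-degenerate arc in the support cannot be contained in the maximal null open set. Next, set $S_\mu := \mathrm{Supp}(\mu|_K)$ and let $\widehat{S}_\mu := \FN \cdot S_\mu \subseteq T$ be its $\FN$-orbit; I would check that $\widehat{S}_\mu$ meets every translate $gK$ in a Borel set (using that $S_\mu$ is closed, hence Borel, and that only finitely many pseudo-group elements are needed to carry $gK \cap \widehat{S}_\mu$ into $K$ — this is where the supporting-subtree property and finiteness come in), so $\widehat{S}_\mu$ is measurable in the sense defined before Lemma \ref{L.LocalMeasures}, and it is $\FN$-invariant by construction. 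The measure-theoretic content is that $\widehat{S}_\mu$ has full $\mu$-measure: its complement meets $K$ inside the maximal $\mu|_K$-null open set by definition of support, and meets each translate $gK$ in a null set by invariance of $\mu$; symmetrically $\widehat{S}_{\mu'}$ has full $\mu'$-measure.

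Now $\widehat{S}_\mu \cap \widehat{S}_{\mu'}$ is an $\FN$-invariant measurable set. Since $\widehat{S}_{\mu'}$ has full $\mu'$-measure, $\widehat{S}_\mu \cap \widehat{S}_{\mu'}$ has the same $\mu'$-measure as $\widehat{S}_\mu$, and the latter is positive because it contains the arc $J$ which has positive $\mu'|_K$-mass; by ergodicity of $\mu'$, the set $\widehat{S}_\mu \cap \widehat{S}_{\mu'}$ therefore has full $\mu'$-measure, so its complement meets $K$ in a $\mu'|_K$-null set. Running the symmetric argument with $\mu$ and $\mu'$ interchanged shows $\widehat{S}_\mu \cap \widehat{S}_{\mu'}$ has full $\mu$-measure as well. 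Intersecting down to $K$: the open complement of $\mathrm{Supp}(\mu|_K)$ inside $K$ is $\mu|_K$-null, and we have just shown that outside $\mathrm{Supp}(\mu'|_K)$ (intersected with $K$) the measure $\mu|_K$ also vanishes; since $\mathrm{Supp}(\mu|_K)$ is by definition the complement of the \emph{largest} null open set, this forces $\mathrm{Supp}(\mu|_K) \subseteq \overline{\mathrm{Supp}(\mu'|_K)} = \mathrm{Supp}(\mu'|_K)$, and symmetry gives the reverse inclusion, hence equality.

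**The main obstacle** I anticipate is step (1): verifying cleanly that the $\FN$-saturation $\widehat{S}_\mu$ is ``measurable'' in the bespoke local sense of Section 7 and that ``$\mu$-null'' behaves as expected under saturation. The subtlety is that a single orbit $\FN \cdot x$ can return to $K$ infinitely often, so one must argue that $\widehat{S}_\mu \cap gK$ is still Borel and that its $\mu|_K$-measure is controlled; the supporting-subtree hypothesis says each arc of $gK$ is covered by \emph{finitely many} translates of $K$, which reduces the bookkeeping to a finite union, but getting the null-set propagation exactly right — that $\mathrm{Supp}(\mu|_K)$ is invariant modulo null sets under the full pseudo-group $\Gamma(K)$, not just under maximal restrictions of generators — is the step that needs care. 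Once that is in place, the ergodicity argument is completely routine, essentially the standard fact that two ergodic measures with the same null ideal restricted to a common positive-measure invariant set agree on supports.
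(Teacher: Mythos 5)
Your route is genuinely different from the paper's, and it is worth seeing why the paper's is shorter. The paper saturates the common arc $I \subseteq \mathrm{Supp}(\mu|_K) \cap \mathrm{Supp}(\mu'|_K)$ by the pseudo-group $\Gamma(K)$ directly: $\Gamma(K)I$ is $\Gamma(K)$-invariant and has positive measure for both $\mu|_K$ and $\mu'|_K$, hence by ergodicity it has full measure for both; since (by $\Gamma(K)$-invariance of supports) it is contained in both closed supports, full measure forces it to be dense in both, and $\mathrm{Supp}(\mu|_K) = \overline{\Gamma(K)I} = \mathrm{Supp}(\mu'|_K)$. You instead globalize: you saturate the entire supports by $\FN$ to get $\FN$-invariant measurable subsets $\widehat{S}_\mu, \widehat{S}_{\mu'} \subseteq T$, intersect, run ergodicity on the intersection for each measure, and then pull back to $K$. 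The ingredients (ergodicity plus invariance of supports) are the same, but your detour through $T$ is longer and, more importantly, the pull-back step quietly re-uses the invariance claim in a way your sketch does not make explicit: what your ergodicity argument yields is that $K \ssm \widehat{S}_{\mu'}$ is $\mu|_K$-null, whereas to compare supports you need $K \ssm \mathrm{Supp}(\mu'|_K)$ to be $\mu|_K$-null, and the passage between these two statements is precisely the assertion $\widehat{S}_{\mu'} \cap K = \mathrm{Supp}(\mu'|_K)$ (equivalently: if $x \in \mathrm{Supp}(\mu'|_K)$, $g \in \FN$, and $gx \in K$, then $gx \in \mathrm{Supp}(\mu'|_K)$). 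You do flag this issue under ``main obstacle,'' but describe it as bookkeeping; in your argument it is actually the whole content of the final ``intersecting down to $K$'' step, not a preliminary measurability check. The paper's version makes the role of this invariance transparent — it enters exactly once, in the containment $\Gamma(K)I \subseteq \mathrm{Supp}(\mu|_K)$ — and then no pull-back is needed because $\Gamma(K)I$ already lives in $K$. So your proof is correct modulo the same unstated lemma the paper uses, but the local argument on $K$ is the more efficient place to invoke it.
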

  
\begin{proof}
 Let $I \subseteq Supp(\mu |_K) \cap Supp(\mu' |_K)$ be non-degenerate, then $\mu |_K(I)>0$ and $ \mu' |_K(I)>0$.  By ergodicity, $\Gamma(K)I$ is a full $\mu |_K$-measure (resp. $\mu' |_K$-measure) subset of $Supp(\mu |_K)$ (resp. $Supp(\mu' |_K)$).  To conclude note that supports are $\Gamma(K)$-invariant, so $\Gamma(K)I$ is contained in both supports and is evidently dense in both.  
\end{proof}

The Baire Category Theorem gives that $K$ is the union of supports of ergodic measures in $\mathcal{M}(T)$ with non-degenerate supports.  Along with Lemma \ref{L.NonDegenerateSupports}, we will use the following finiteness result of Guirardel from \cite{Gui98}.

\begin{lemma}\label{L.Gui}
 Let $T \in \partial \cvn$ have dense orbits.  The set $\mathcal{M}(T)$ contains at most $3N-4$ mutually non-homothetic ergodic measures.
\end{lemma}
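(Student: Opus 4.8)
The plan is to convert the statement into a finite‑dimensional linear‑algebra fact and then bound the relevant dimension. Fix a finite supporting subtree $K \subseteq T$; by Lemma \ref{L.LocalMeasures} the cone $\mathcal{M}(T)$ is identified with the cone of finite positive $\Gamma(K)$-invariant Borel measures on $K$, which I regard as a subset of the real vector space $\mathcal{V}$ of signed Borel measures on $K$. First I record two observations. (1) Because $T$ has dense orbits, every invariant measure on $T$ is non-atomic: an atom at a point $p$ would, by invariance, propagate to atoms of equal mass at every point of the dense, infinite orbit $\FN p$, and infinitely many of these lie in any single arc $I$, contradicting finiteness of $\mu_I$. (2) Two non-homothetic ergodic measures $\mu, \mu' \in \mathcal{M}(T)$ are mutually singular: writing the Lebesgue decomposition of $\mu'|_K$ with respect to $\mu|_K$, the absolutely continuous part is again $\Gamma(K)$-invariant, so its Radon--Nikodym density is $\Gamma(K)$-invariant and hence a.e.\ constant by ergodicity of $\mu$; if this constant were positive, running the symmetric argument would force $\mu'$ to be homothetic to $\mu$, so the constant is $0$, i.e.\ $\mu' \perp \mu$. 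Since pairwise mutually singular positive measures are linearly independent in $\mathcal{V}$, the number of pairwise non-homothetic ergodic measures in $\mathcal{M}(T)$ is at most $\dim_{\mathbb{R}} \operatorname{span}_{\mathbb{R}} \mathcal{M}(T)$, and the task reduces to showing this dimension is at most $3N - 4$.

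To bound $\dim_{\mathbb{R}} \operatorname{span}_{\mathbb{R}} \mathcal{M}(T)$ I would use a geometric model. Fixing a basis $\mathscr{B}$ of $\FN$, form the suspension band complex $\mathscr{K}$ associated to $(K,\mathscr{B})$ as in the discussion of geometric trees above; it carries $N$ bands over the finite tree $K$, has $\pi_1(\mathscr{K}) = \FN$, and its foliation resolves $T$, so invariant measures on $T$ inject into transverse measures on $\mathscr{K}$. After choosing $K$ well one invokes Imanishi's theorem (the circle of ideas around Proposition \ref{P.WeakImanishi}): every leaf of the foliation becomes compact or locally dense and there are only finitely many minimal components, so a transverse measure is pinned down by its masses on a finite family of transverse arcs, subject only to finitely many linear matching relations at the finitely many singular points; hence $\operatorname{span}_{\mathbb{R}} \mathcal{M}(T)$ is finite-dimensional, with dimension governed by an Euler-characteristic count. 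One organizes this count along the transverse covering of $T$ by indecomposable subtrees and simplicial edges furnished (in the geometric case, and after passing to a geometric resolution in general) by Corollary \ref{C.GeomGraph}: a measure is the data of a measure on each indecomposable vertex tree, for the action of its stabilizer, together with one length parameter per edge orbit, and the vertex-group ranks and the number of edge orbits satisfy the Euler-characteristic constraint for the graph-of-actions decomposition of $\FN$. The crude form of this bookkeeping yields $3N - 3$ (the maximal number of edges in a reduced graph of rank $N$); the improvement to $3N - 4$ --- which, reassuringly, is exactly the dimension of projectivized Outer space, consistent with the fact that $\mu \mapsto l_\mu$, the measured translation-length function, lands in $\barcvn$ --- uses that $T$ itself is non-simplicial, so one degree of freedom is absent.

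I expect the main obstacle to be precisely this sharpening to the constant $3N - 4$: the soft arguments above produce only bounds of the shape $O(N)$, or $3N - 2$, or $3N - 3$, and extracting the exact value requires the careful parameter count of \cite{Gui98} --- equivalently the sharp geometric-index inequality for very small trees --- together with some care that the geometric models being used (the band complex $\mathscr{K}$, the quotient trees $T_\mu$ obtained by collapsing $\mu$-null arcs, and any geometric resolution of $T$) are controlled enough, e.g.\ have the expected complexity and, where needed, remain very small, for those counts to be valid.
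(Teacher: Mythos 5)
There is a genuine gap, and it sits exactly at the point this lemma needs care. Your step (1) --- that dense orbits force every invariant measure to be non-atomic --- is false. Density of the orbit $\FN p$ in $T$ does not imply that $\FN p$ meets a given arc $I$ in infinitely many points: in a tree with dense orbits branch points are dense along every arc, so a point of $I$ can be approximated by orbit points sitting on side branches attached arbitrarily close to it, while $\FN p \cap I$ remains finite (or even empty). This is precisely the phenomenon of \emph{sparse} points and directions discussed just before Lemma \ref{L.AtomsFreeSplit}: an invariant measure with an atom forces a sparse point, and trees in $\partial \cvn$ with dense orbits containing sparse points do exist (see the ``Extension over Completion Points'' examples in the Appendix). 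Atomic ergodic measures --- e.g.\ a weighted counting measure concentrated on a sparse orbit --- are genuine elements of $\mathcal{M}(T)$ and must be counted toward the bound, since the statement quantifies over all of $\mathcal{M}(T)$. The intended route is to note that Guirardel's theorem in \cite{Gui98} covers only non-atomic measures, and to handle atoms via the free splitting produced by Lemma \ref{L.AtomsFreeSplit} together with induction on rank; your argument instead excludes this case by an incorrect claim, so it does not prove the statement as formulated.

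On the second half: your observation that non-homothetic ergodic measures are mutually singular, hence linearly independent, is fine (and needs no non-atomicity), but the dimension count you sketch --- Imanishi plus Euler-characteristic bookkeeping on the band complex $\mathscr{K}$ --- is exactly the content of \cite{Gui98}, as you yourself concede; you do not actually extract the constant $3N-4$, nor even a self-contained finiteness argument. Since the lemma is quoted from \cite{Gui98} in the non-atomic case, deferring to that paper is legitimate; but then the only thing left to supply is the extension to measures with atoms, which is the part your proposal gets wrong. To repair it: drop claim (1); given an ergodic measure with an atom, pass to the sparse direction it produces, apply Lemma \ref{L.AtomsFreeSplit} to split $T$ as a graph of actions over a free splitting with vertex stabilizers proper free factors, and induct on rank, combining the counts for the vertex trees (and the atomic classes carried by the attaching data) so that the total does not exceed $3N-4$.
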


Guirardel obtains Lemma \ref{L.Gui} only for non-atomic measures, but the statement above follows immediately by using Lemma \ref{L.AtomsFreeSplit} below and induction on rank.  Further, Guirardel's result can be extended to all trees in $\partial \cvn$ by restricting the class of measures allowed on simplicial edges to be atoms placed at the midpoint of the edge.  The importance of Lemma \ref{L.Gui} is that $T$ is finite dimensional from the current measure-theoretic point of view.  Note that Lemma \ref{L.Gui} allows us to express the ambient measure $\mu_{amb}$ on $T$ as a finite sum of mutually non-homothetic ergodic measures on $T$.

Put a relation $\sim$ on $\mathcal{M}(T)$; for $\mu, \mu' \in \mathcal{M}(T)$, say that $\mu \sim \mu'$ if $Supp(\mu |_K) = Supp(\mu' |_K)$.  It is easy to check that $\sim$ does not depend on $K$ (any two finite supporting subtrees are contained in a finite supporting subtree), and $\sim$ clearly is an equivalence relation; use $[\cdot]$ to denote the classes of $\sim$.   

\begin{cor}\label{C.Finiteness1}
 Let $T \in \partial \cvn$ have dense orbits.  There is a uniform bound on the number of orbits of subtrees in any transverse family in $T$.
\end{cor}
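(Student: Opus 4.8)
The plan is to reduce the counting of orbits of subtrees in a transverse family to the counting of ergodic measures supplied by Lemma \ref{L.Gui}. Fix a finite supporting subtree $K \subseteq T$, and let $\mathscr{Y} = \{Y_v\}_{v \in V}$ be a transverse family in $T$; after passing to closures (which changes nothing about the number of orbits, and only enlarges intersections to single points) we may assume each $Y_v$ is closed. The first step is to observe that because $K$ is a supporting subtree and each $Y_v$ is a non-degenerate subtree, some translate $gY_v$ meets $K$ in a non-degenerate arc; conversely, since $\mathscr{Y}$ is a transverse family, the arcs $\{gY_v \cap K\}$ obtained this way, for the various orbits, overlap each other in at most single points. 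Thus the number of $\FN$-orbits in $\mathscr{Y}$ is bounded by the number of ``essentially disjoint'' non-degenerate arcs one can simultaneously embed in $K$ so as to come from distinct orbits of a transverse family — but a priori that number is still unbounded, so one needs to inject measure theory.

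The second step is to attach to each orbit a measure. For each orbit $[Y_v]$, restrict the ambient measure $\mu_{amb}$ of $T$ to the subtree $Y_v$ (i.e.\ take $\mu_{amb}|_{Y_v}$ on finite arcs of $Y_v$), extend it by zero to all of $T$ on arcs, and then push around by $\FN$ to get an $\FN$-invariant measure $\mu^{(v)} \in \mathcal{M}(T)$: concretely, for a finite arc $I \subseteq T$, set $\mu^{(v)}_I$ to be Lebesgue measure on $I \cap (\bigcup_{g} gY_v)$. Invariance of $\mathscr{Y}$ and the transverse-family condition (translates of $Y_v$ are disjoint or equal) make this well-defined and $\FN$-invariant, and $\mu^{(v)}$ has non-degenerate support because $Y_v$ is non-degenerate and meets some translate of $K$ in an arc. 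Moreover $\mu^{(v)} \leq \mu_{amb}$, so each $\mu^{(v)}$ is absolutely continuous with respect to $\mu_{amb}$ and in particular non-atomic; by the remark after Lemma \ref{L.NonDegenerateSupports}, $\mu^{(v)}$ decomposes into finitely many ergodic pieces, each with non-degenerate support (discard any atomic or degenerate pieces — there are none here). The key point is that for two distinct orbits $[Y_v] \neq [Y_{v'}]$, the supports $Supp(\mu^{(v)}|_K)$ and $Supp(\mu^{(v')}|_K)$ intersect in at most a degenerate set: a point in the intersection would be an interior point of an arc of some $gY_v$ and of some $hY_{v'}$, forcing $gY_v \cap hY_{v'}$ to be non-degenerate, hence (since $\mathscr{Y}$ is a transverse family) $gY_v = hY_{v'}$, contradicting that the orbits are distinct.

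The third step is to combine this with Lemma \ref{L.NonDegenerateSupports} and Lemma \ref{L.Gui}. Pick one ergodic component $\nu^{(v)}$ of $\mu^{(v)}$ for each orbit; each $\nu^{(v)}$ is ergodic with non-degenerate support contained in $Supp(\mu^{(v)}|_K)$. By the support-disjointness just established, for $[Y_v] \neq [Y_{v'}]$ the supports $Supp(\nu^{(v)}|_K)$ and $Supp(\nu^{(v')}|_K)$ meet in at most a degenerate set, so by Lemma \ref{L.NonDegenerateSupports} the measures $\nu^{(v)}$ and $\nu^{(v')}$ are not homothetic. Therefore the number of distinct orbits of subtrees in $\mathscr{Y}$ is at most the number of mutually non-homothetic ergodic measures in $\mathcal{M}(T)$ with non-degenerate support, which by Lemma \ref{L.Gui} is at most $3N-4$. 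This bound depends only on $N$, not on $\mathscr{Y}$ or even on $T$, which is what is claimed.

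The step I expect to be the main obstacle is the careful construction of the measure $\mu^{(v)}$ attached to an orbit and the verification that it is genuinely $\FN$-invariant and compatible with restriction — one must check that on a finite arc $I$ the prescription ``Lebesgue measure on $I \cap \bigcup_g gY_v$'' is a well-defined finite Borel measure (finiteness is clear since $I$ has finite length and the measure is dominated by Lebesgue), that it is compatible under $I \subseteq J$, and that the transverse-family property is exactly what prevents double-counting of overlapping translates. Once the measures are in hand, the deduction from Lemmas \ref{L.NonDegenerateSupports} and \ref{L.Gui} is essentially formal.
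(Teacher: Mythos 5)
Your overall strategy is sound and lives in the same circle of ideas as the paper's proof: both arguments reduce the count of orbits to the bound of Lemma \ref{L.Gui} by way of Lemma \ref{L.NonDegenerateSupports}. The mechanism differs, though. The paper pigeonholes: if a transverse family had more than $3N-4$ orbits, some single ergodic measure with non-degenerate support would charge arcs $I\subseteq Y$ and $I'\subseteq Y'$ lying in two \emph{different} orbits, and ergodicity of that measure produces an element of the pseudo-group $\Gamma(K)$, i.e.\ a restriction of some $g\in\FN$, carrying a subarc of $I$ into $I'$; then $gY\cap Y'$ is non-degenerate, contradicting transversality. You instead manufacture one invariant measure per orbit (Lebesgue measure on the union of translates) and separate orbits by supports. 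Your route is constructive and avoids invoking the Baire-category remark, at the cost of needing the finite ergodic decomposition for the measures $\mu^{(v)}$ (the paper's remark after Lemma \ref{L.Gui} states this only for $\mu_{amb}$, but it holds for any invariant measure by the finite-dimensionality of $\mathcal{M}(T)$, so this is harmless).

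There is one step whose justification, as written, does not work: the claim that $Supp(\mu^{(v)}|_K)\cap Supp(\mu^{(v')}|_K)$ is degenerate because ``a point in the intersection would be an interior point of an arc of some $gY_v$ and of some $hY_{v'}$.'' Supports are closed sets and may well contain accumulation points that are not interior to --- indeed not even contained in --- any translate of $Y_v$ or $Y_{v'}$, so a common support point forces no non-degenerate overlap of translates. The statement you actually need (no common non-degenerate arc) is nevertheless true and repairable: if an arc $J$ lies in $Supp(\mu^{(v)}|_K)$, then every subarc of $J$ meets $\bigcup_g gY_v$ in positive Lebesgue measure, so the interiors of the arcs $gY_v\cap J$ are dense in $J$; any non-degenerate arc $hY_{v'}\cap J$ has interior disjoint from all of these interiors (transversality forces $gY_v\cap hY_{v'}$ to be at most a point, hence disjoint interiors), so it would lie in a closed nowhere dense subset of $J$, which is impossible. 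Hence $\mu^{(v')}(J)=0$ and $J\not\subseteq Supp(\mu^{(v')}|_K)$, which is exactly the disjointness you use; with this patch (and noting that you only need \emph{one} ergodic component of $\mu^{(v)}$ with non-degenerate support, which exists since a finite union of closed sets containing no arc contains no arc), your proof goes through and yields the same bound $3N-4$. Alternatively, the paper's ergodicity argument sidesteps this support analysis entirely.
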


\begin{proof}
 Let $\mathscr{Y}$ be a transverse family in $T$, and let $k$ be the number of $[\cdot]$-classes of ergodic measures in $\mathcal{M}(T)$ with non-degenerate support; Lemma \ref{L.Gui} gives that $k \leq 3N-4$.  If $\mathscr{Y}$ contains more than $k$ orbits of trees, then Lemma \ref{L.NonDegenerateSupports} gives trees $Y,Y' \in \mathscr{Y}$ in different orbits, non-degerate arcs $I \subseteq Y$ and $I' \subseteq Y'$, and an ergodic measure $\mu \in \mathcal{M}(T)$ with non-degnerate support such that $\mu_I(I), \mu_{I'}(I') > 0$.  Choose a finite supporting subtree $K$ for $T$ that contains $I$ and $I'$; ergodicity gives an element of $\Gamma(K)$ taking a non-degenerate subarc of $I$ into $I'$.  Since $\mathscr{Y}$ is assumed to be a transverse family, this is impossible, so the number of orbits in any transverse family must be bounded by $k \leq 3N-4$.
\end{proof}

We also get:

\begin{cor}\label{C.TwoOrbits}
 Let $T \in \partial \cvn$ have dense orbits.  There is a transverse family in $T$ that contains more than one orbit if and only if there are measures $\mu, \mu' \in \mathcal{M}(T)$ with non-degenerate support such that $[\mu] \neq [\mu']$.
\end{cor}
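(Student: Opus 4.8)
The plan is to prove the two implications separately, moving between transverse families and invariant measures.

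\textbf{From a multi-orbit transverse family to two measure classes.} Suppose $\mathscr{Y}$ is a transverse family with representatives $Y$ and $Y'$ of two distinct $\FN$-orbits. To each I would attach an invariant measure that only ``sees'' its own orbit: for a subtree $Z\subseteq T$ let $\mu_Z$ be given on a finite arc $J$ by $\mu_Z(B):=\mu_{amb}(B\cap \FN Z)$ for Borel $B\subseteq J$. Since $\FN Z\cap J$ is a countable union of subarcs of $J$ this is a well-defined finite positive Borel measure, and $\FN$-invariance of $\FN Z$ together with invariance of $\mu_{amb}$ shows $\mu_Z\in\mathcal{M}(T)$. Put $\mu:=\mu_Y$, $\mu':=\mu_{Y'}$, and fix a finite supporting subtree $K$ containing non-degenerate arcs $I_0\subseteq Y$ and $I_0'\subseteq Y'$. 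On $I_0$ the measure $\mu|_K$ agrees with Lebesgue measure, so $I_0\subseteq Supp(\mu|_K)$ and $\mu$ has non-degenerate support, and likewise for $\mu'$. If $[\mu]=[\mu']$ then $I_0\subseteq Supp(\mu'|_K)$, so $\mu'|_K$ gives positive mass to $I_0$; unwinding definitions, $\mu_{amb}(I_0\cap\FN Y')>0$, so some translate $gY'$ meets the arc $I_0\subseteq Y$ in a set of positive length, hence in a non-degenerate arc. As $Y$ and $gY'$ both lie in $\mathscr{Y}$, transversality forces $Y=gY'$, contradicting that $Y,Y'$ are in different orbits; hence $[\mu]\neq[\mu']$.

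\textbf{From two measure classes to a multi-orbit transverse family.} The first move is a reduction to \emph{ergodic} measures: if there were only one $[\cdot]$-class of non-degenerate-support ergodic measures, say $[\lambda]$, then the Baire category observation preceding Lemma~\ref{L.Gui} would give $Supp(\lambda|_K)=K$, and since (using the finite ergodic decomposition from Lemma~\ref{L.Gui}) any non-degenerate-support measure has an ergodic component of non-degenerate support, which would then be $\sim\lambda$ and hence of full support, every non-degenerate-support measure would have full support --- contradicting $[\mu]\neq[\mu']$. So fix ergodic $\nu,\nu'$ with non-degenerate support and $[\nu]\neq[\nu']$. By Lemma~\ref{L.NonDegenerateSupports}, $Supp(\nu|_K)\cap Supp(\nu'|_K)$ contains at most one point, so I may choose a non-degenerate arc $I\subseteq Supp(\nu|_K)$ disjoint from $Supp(\nu'|_K)$ and, symmetrically, a non-degenerate arc $I'\subseteq Supp(\nu'|_K)$ disjoint from $Supp(\nu|_K)$; thus $I$ is $\nu'$-null but carries positive $\nu$-mass, and vice versa for $I'$. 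Now form the subtrees $X_I,X_{I'}$ of Discussion~\ref{D.BuildTF}. As $X_I\subseteq\bigcup_g gI$ and each $gI$ is $\nu'$-null, $X_I$ is $\nu'$-null; were $X_I=T$ this would make $\nu'$ vanish on the supporting subtree $K$, impossible, so $X_I\subsetneq T$ and $\mathscr{X}_I:=\{gX_I\}_g$ is a transverse family, and symmetrically $\mathscr{X}_{I'}:=\{gX_{I'}\}_g$ is a transverse family. The crucial point is that these two one-orbit families are compatible: $gX_I\cap X_{I'}$ is a subtree contained in $\bigcup_h(gX_I\cap hI')$, and each $gX_I\cap hI'$ is a subtree of the arc $hI'$ lying in the $\nu'$-null set $\FN X_I$, hence of $\nu'$-measure zero, whereas every non-degenerate subarc of $hI'$ has positive $\nu'$-measure since $I'\subseteq Supp(\nu'|_K)$; so each $gX_I\cap hI'$ is degenerate, and a subtree contained in the resulting countable set must be a single point. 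Thus $\mathscr{X}_I\wedge\mathscr{X}_{I'}=\emptyset$, so Lemma~\ref{L.CommonRefinement}(i) makes $\mathscr{X}_I\cup\mathscr{X}_{I'}$ a transverse family; and $X_I,X_{I'}$ lie in distinct $\FN$-orbits because $\FN X_I$ is $\nu'$-null while $\FN X_{I'}\supseteq I'$ is not. Hence $\mathscr{X}_I\cup\mathscr{X}_{I'}$ is a transverse family with more than one orbit.

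The main obstacle I anticipate is exactly the compatibility step above: a priori one controls only the common refinement $\mathscr{X}_I\wedge\mathscr{X}_{I'}$, whose number of orbits is not under control, so one must arrange $I$ and $I'$ so that the two induced families do not interact at all. This is what forces the reduction to ergodic measures (for a general pair $[\mu]\neq[\mu']$ the support of $\mu$ need not contain an arc on which $\mu'$ vanishes) and what makes Lemma~\ref{L.NonDegenerateSupports}, together with the trivial observation that a subtree cannot meet a countable set in more than one point, carry the argument.
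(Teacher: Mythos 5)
Your proof is correct. The backward direction follows essentially the same strategy as the paper's: reduce to ergodic measures, use Lemma~\ref{L.NonDegenerateSupports} to find arcs $I$ and $I'$ in disjoint supports, and show that the families $\mathscr{X}_I$ and $\mathscr{X}_{I'}$ from Discussion~\ref{D.BuildTF} are compatible and lie in distinct orbits. You fill in two things the paper leaves implicit: the reduction from arbitrary non-degenerate-support measures to \emph{ergodic} ones (needed to invoke Lemma~\ref{L.NonDegenerateSupports}), and the verification that $\mathscr{X}_I\wedge\mathscr{X}_{I'}=\emptyset$, which the paper asserts directly. One small imprecision: Lemma~\ref{L.NonDegenerateSupports} only gives that $Supp(\nu|_K)\cap Supp(\nu'|_K)$ contains no non-degenerate arc (not that it is at most a point); but since a closed set containing no arc is nowhere dense in $K$, the arc $I$ you want still exists, so nothing breaks.

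The forward direction is where you genuinely diverge. The paper points to the proof of Corollary~\ref{C.Finiteness1}, which runs an ergodicity argument: choose an ergodic measure charging an arc in each of $Y$ and $Y'$, and if they were in the same class, use the pseudo-group $\Gamma(K)$ to transport a subarc of $I\subseteq Y$ into $I'\subseteq Y'$, contradicting transversality. You instead build the measures $\mu_Y$ and $\mu_{Y'}$ explicitly by restricting $\mu_{amb}$ to the orbit of the subtree, and derive the contradiction from positivity of $\mu_{amb}$ on $I_0\cap\FN Y'$. This is more elementary — it avoids the pseudo-group/ergodicity machinery entirely and produces the witnessing measures by hand — at the cost of being a little longer than simply citing the earlier corollary. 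Both are sound.
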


\begin{proof}
 If $\mathscr{T}$ is a transverse family containing more than one orbit, then the proof of Corollary \ref{C.Finiteness1} gives that there are $\mu,\mu' \in \mathcal{M}(T)$ with non-degenerate support such that $[\mu] \neq [\mu']$.

 Conversely, let $\mu_1,\ldots,\mu_k$ be representatives of the $[\cdot]$-classes whose members have non-degnerate support; by assumption $k>1$.  By Lemma \ref{L.NonDegenerateSupports} there are arcs $I_1,\ldots, I_k \subseteq T$ such that no translate of $I_i$ intersects $I_j$ non-degenerately for $i \neq j$.  It follows that $\wedge_{i} \mathcal{X}_{I_i} = \emptyset$, so $\cup_i \mathcal{X}=\mathcal{X}_{I_i}$ is a transverse family by Lemma \ref{L.CommonRefinement}; evidently $\mathscr{X}$ contains $k$ orbits.
\end{proof}

\begin{prop}\label{P.Ergodic}
 Let $T \in \partial \cvn$ have dense orbits, and let $\mathscr{T}$ be a transverse family in $T$.  For any $Y \in \mathscr{T}$, $Stab(Y) \neq \{1\}$; further, the action $Stab(Y) \curvearrowright Y$ is not discrete.
\end{prop}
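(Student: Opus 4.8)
The plan is to establish both assertions at once by a direct local argument, without recourse to the measure machinery. Since $\mathscr{T}$ is a transverse family, $Y$ is a non-degenerate subtree and hence contains a non-degenerate arc; I would fix such an arc $I=[a,b]\subseteq Y$ together with an interior point $x\in(a,b)$ that is not a branch point of $T$. Such a point exists because a very small $\FN$-tree has only countably many branch points --- indeed only finitely many $\FN$-orbits of them --- whereas $(a,b)$ is uncountable. After shrinking $I$ around $x$, I may assume that every point of $I$ is a non-branch point of $T$ and that $I$ lies in a neighborhood of $x$ on which $T$ is isometric to an interval.

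The first step is the observation that every $g\in\FN$ with $gx\in\mathrm{int}(I)$ lies in $Stab(Y)$. For such a $g$, the point $gx$ is again a non-branch point of $T$ (the set of branch points is $\FN$-invariant), it is interior to $I$ by hypothesis, and it is interior to $gI$ since $x$ is interior to $I$. Near $gx$ the tree $T$ is isometric to an interval, so the arcs $I$ and $gI$ --- which both pass through the interior point $gx$ --- overlap on a neighborhood of $gx$; hence $I\cap gI$ is non-degenerate. As $I\subseteq Y$ and $gI\subseteq gY$, this forces $Y\cap gY$ to be non-degenerate, and since $\mathscr{T}$ is a transverse family we conclude $gY=Y$, i.e.\ $g\in Stab(Y)$.

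The second step uses the hypothesis that $T$ has dense orbits. The orbit $\FN x$ is dense in $T$; localizing at the non-branch points of $\mathrm{int}(I)$, which are dense in $\mathrm{int}(I)$ and each of which admits a neighborhood in $T$ contained in $I$, one gets that $\FN x\cap\mathrm{int}(I)$ is dense in $\mathrm{int}(I)$. By the first step, $\FN x\cap\mathrm{int}(I)\subseteq Stab(Y)\cdot x$, so the orbit $Stab(Y)\cdot x$ is dense in the non-degenerate arc $\mathrm{int}(I)\subseteq Y$. In particular it is infinite and has accumulation points, whence $Stab(Y)\neq\{1\}$ and the action $Stab(Y)\curvearrowright Y$ is not discrete.

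I expect the only delicate point to be the passage, in the second step, from density of $\FN x$ in $T$ to density of $\FN x$ within the prescribed arc $I$; this is exactly where the finiteness of the set of branch points of $T$ is needed, since without it a dense orbit could in principle fail to accumulate onto $\mathrm{int}(I)$ from within $I$. The remainder of the argument is elementary $\mathbb{R}$-tree geometry together with the defining property of a transverse family, and in particular it requires no information about whether $Stab(Y)$ is finitely generated or whether $T$ is geometric.
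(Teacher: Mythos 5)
Your first step is essentially sound even though its stated justification is not: in a very small tree with dense orbits the branch points are \emph{dense}, so no non-degenerate arc is branch-point-free and no point has a neighborhood in $T$ isometric to an interval. However, the conclusion you want there survives for a different reason: if $gx$ is a non-branch point interior to both $I$ and $gI$, then at $gx$ there are exactly two directions, both of them used by $I$ and both used by $gI$, and two arcs emanating from a point in a common direction always share a non-degenerate initial segment. So $I\cap gI$ is indeed non-degenerate and $g\in Stab(Y)$.

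The genuine gap is in the second step, and it is the one you flag as delicate. Density of $\FN x$ in the metric topology of $T$ does \emph{not} imply density of $\FN x\cap\mathrm{int}(I)$ in $\mathrm{int}(I)$. Your justification --- that each non-branch point of $\mathrm{int}(I)$ ``admits a neighborhood in $T$ contained in $I$'' --- is false precisely because branch points are dense: a metric ball $B(z,\epsilon)$ about a non-branch point $z\in\mathrm{int}(I)$ always contains the sprouts off $I$ at all branch points within $\epsilon$ of $z$, so $B(z,\epsilon)\not\subseteq I$ for any $\epsilon>0$, and $\FN x$ can accumulate at $z$ entirely from outside $I$. Worse, the conclusion itself can fail for a bad choice of $x$: the Appendix exhibits trees in $\partial\cvn$ with dense orbits containing \emph{sparse} points, whose orbit meets every finite subtree (in particular every arc) in only finitely many points. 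Your countability argument rules out $x$ being a branch point, but nothing in it rules out $x$ being sparse or, more generally, having an orbit that misses $\mathrm{int}(I)$ on a dense subset. You also misstate the cardinality of the branch locus: there are finitely many $\FN$-\emph{orbits} of branch points, but the set of branch points itself is infinite and dense, not finite.

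This is exactly why the paper's proof reaches for ergodic measures rather than topological density. Ergodicity of $\mu\in\mathcal M(T)$ with respect to the pseudogroup $\Gamma(K)$ says that the saturation of any positive-$\mu$-measure Borel subset of $\mathrm{Supp}(\mu|_K)$ has \emph{full} measure --- a quantitative statement that cannot be defeated by branches escaping $I$ or by sparse orbits --- and this is what produces a non-trivial $g$ restricting a sub-arc of $I$ back into $I$. If you want to avoid the measure machinery, you would first have to prove that for a suitable $x\in\mathrm{int}(I)$ the set $\FN x\cap\mathrm{int}(I)$ is infinite and accumulates in $\mathrm{int}(I)$; that claim appears to require Lemma~\ref{L.AtomsFreeSplit} (to exclude sparse behavior) together with some positive-measure substitute for the neighborhood argument, at which point one is effectively back to the paper's approach.
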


\begin{proof}
For any $Y \in \mathscr{T}$ and any non-degerate arc $I \subseteq Y$, there is an ergodic measure $\mu \in \mathcal{M}(T)$ with non-degenerate support such that $\mu_I(I) > 0$; by replacing $I$ with a non-degenerate sub-arc, we can assume that $Supp(\mu_I)=I$, so for any non-degenerate $J \subseteq I$, we have that $\mu_I(J) >0$.  Enlarge $I$ to a finite supporting subtree $K$ for $T$; clearly $Supp(\mu |_K) \supseteq I$.  By ergodicity of $\mu$, we have that the image of $J$ under $\Gamma(K)$ is a $\mu_I$-full measure subset of $I$.  So, choosing $J$ such that $I \ssm J$ contains a non-degenerate arc gives that there is $\gamma_g \in \Gamma(K)$, restricting $g \in \FN$, taking a non-degenerate sub-arc of $J$ onto a non-degenerate subarc $J' \subseteq I$.  It follows that $g \neq 1$.  Since $gY \cap Y \supsetneq J'$, and since $\mathscr{T}$ is a transverse family, we must have that $gY =Y$, hence $1 \neq g \in Stab(Y)$.  Choosing $J$ to be very short gives the statement that $Y$ is not discrete.
\end{proof}

Proposition \ref{P.Ergodic} implies that if $T \in \partial \cvn$ has a transverse family containing a simplicial edge if and only if $T$ does not have dense orbits.  

\subsection{Projections}

Fix $T \in \partial \cvn$, let $\mu \in \mathcal{M}(T)$ be non-atomic, and let $x, y \in T$.   There is a unique segment $[x,y] \subseteq T$ connecting $x$ to $y$.  Define a pseudo-metric $d'_{\mu}$ on $T$ by setting $d'_{\mu}(x,y):=\mu_{[x,y]}([x,y])$.  Invariance of $\mu$ gives invariance of $d'_{\mu}$.  The quotient of $T$ where $d'_{\mu}$ induces a metric is denoted $(T_{\mu},d_{\mu})$; we have a map $f_{\mu}:T \to T_{\mu}$.  Evidently, the map $f_{\mu}$ is Lipschitz exactly when $\mu$ is absolutely continuous with respect to $\mu_{amb}$.  

\begin{lemma}\label{L.Fibers}
 Let $T \in \partial \cvn$ have dense orbits, let $\mu \in \mathcal{M}(T)$ be absolutely continuous with respect to $\mu_{amb}$, and let $f_{\mu}:T \to T_{\mu}$ be the corresponding projection.  For any $x \in T_{\mu}$, $f_{\mu}^{-1}(x)$ is a closed subtree of $T$, and if $f_{\mu}^{-1}(x)$ is non-degenerate, then the collection of translates $\{gf_{\mu}^{-1}(x)\}_{g \in \FN}$ is a transverse family in $T$.  Further, $Stab(f_{\mu}^{-1}(x))=Stab(x)$; in particular, $Stab(f_{\mu}^{-1}(x))$ is finitely generated.
\end{lemma}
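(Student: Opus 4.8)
The plan is to push everything through the projection $f_\mu\colon T\to T_\mu$, using only that $f_\mu$ is continuous, $\FN$-equivariant and monotone along segments, together with the fact that $\mu$ vanishes on every arc that lies in a single fiber of $f_\mu$.

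First, by absolute continuity we may rescale $\mu$ so that $\mu\le\mu_{amb}$; then $d'_\mu\le d$, so $f_\mu$ is $1$-Lipschitz, in particular continuous, and each fiber $f_\mu^{-1}(x)$ is closed. If $a,b\in f_\mu^{-1}(x)$, then $d'_\mu(a,b)=\mu_{[a,b]}([a,b])=0$, and compatibility under restriction gives $\mu_{[a,c]}([a,c])=0$, i.e.\ $f_\mu(c)=f_\mu(a)=x$, for every $c\in[a,b]$; hence $[a,b]\subseteq f_\mu^{-1}(x)$, so $f_\mu^{-1}(x)$ is a closed (possibly degenerate) subtree. Next, invariance of $\mu$ makes $d'_\mu$ invariant, so $f_\mu$ is $\FN$-equivariant, and one checks directly that $g\,f_\mu^{-1}(x)=f_\mu^{-1}(gx)$ for every $g\in\FN$. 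Thus $\{g\,f_\mu^{-1}(x)\}_{g\in\FN}$ is precisely the collection of fibers of $f_\mu$ over the orbit $\FN\cdot x$: its members are pairwise disjoint (distinct fibers of a map are disjoint), each is a non-degenerate subtree whenever $f_\mu^{-1}(x)$ is (being an isometric copy of it), and the collection is $\FN$-invariant, so it is a transverse family. The same identity shows that $g\in Stab(f_\mu^{-1}(x))$ iff $f_\mu^{-1}(gx)=f_\mu^{-1}(x)$ iff $gx=x$ (fibers are non-empty and pairwise disjoint), i.e.\ $Stab(f_\mu^{-1}(x))=Stab(x)$.

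It remains to prove that $Stab(x)$ is finitely generated, and this is the only step requiring real work. The claim is that $T_\mu$ is itself a very small $\FN$-tree with dense orbits. Density of orbits is immediate: $f_\mu$ is continuous, surjective and equivariant, so it carries a dense $\FN$-orbit of $T$ to a dense $\FN$-orbit of $T_\mu$; and minimality of $T_\mu$ follows from minimality of $T$, since the $f_\mu$-preimage of a proper invariant subtree of $T_\mu$ is a proper invariant subtree of $T$. For the very small condition it suffices (the flip and tripod cases reduce to it) to show that a nontrivial $g\in\FN$ cannot fix a non-degenerate arc $[a',b']\subseteq T_\mu$ pointwise. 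If such a $g$ is elliptic in $T$, then projecting lifts of $a'$ and $b'$ onto the fixed-point set $A(g)$ and using that each fiber of $f_\mu$ is a subtree shows that these projections lie in the fibers over $a'$ and $b'$, so $g$ fixes pointwise a non-degenerate sub-arc of $A(g)\subseteq T$, contradicting triviality of arc stabilizers in $T$. If $g$ is hyperbolic in $T$, then $g$ is elliptic in $T_\mu$, so $\mu$ vanishes on every arc of the axis $A(g)$ (otherwise the $g$-orbit of $f_\mu(z_0)$ would be unbounded in $T_\mu$); hence $f_\mu(A(g))$ is a single point $q'$, and for a lift $a$ of an endpoint $a'\ne q'$ with projection $p$ onto $A(g)$, the decomposition $[a,ga]=[a,p]\cup[p,gp]\cup[gp,ga]$ gives $0=\mu([a,ga])\ge\mu([a,p])=d_\mu(a',q')>0$, a contradiction. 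Granting that $T_\mu$ is very small with dense orbits, and noting that a nontrivial minimal simplicial tree has discrete vertex orbits, $T_\mu$ is either a single point, in which case $Stab(x)=\FN$, or a member of $\partial\cvn$ with dense orbits, in which case Lemma~\ref{L.CanonicalPeripheral} gives that $Stab(x)$ has rank at most $N$; either way $Stab(f_\mu^{-1}(x))=Stab(x)$ is finitely generated.

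The main obstacle is precisely this last claim, that the $\mu$-quotient $T_\mu$ is still very small; everything else is routine manipulation with the construction of $f_\mu$. This very small-ness can alternatively be quoted from Guirardel's work on measured trees \cite{Gui98}.
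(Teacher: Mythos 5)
Your proof is correct and follows essentially the same route as the paper: fibers are closed subtrees, equivariance gives both the transverse family and the identification $Stab(f_{\mu}^{-1}(x))=Stab(x)$, and the rank bound comes from Lemma \ref{L.CanonicalPeripheral} applied to point stabilizers of $T_{\mu}$. The only real difference is that you spell out the fact that $T_{\mu}$ is a (minimal) very small tree with dense orbits -- a step the paper leaves implicit (and which, as you note, can also be quoted from \cite{Gui98}) when it invokes Lemma \ref{L.CanonicalPeripheral} for the control on ranks.
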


\begin{proof}
 Since $\mu$ is absolutely continuous with respect to $\mu_{amb}$, $\mu$ is non-atomic.  It is clear that for any $y \in T_{\mu}$, one has that $f_{\mu}^{-1}(\{y\})$ is path connected, hence a subtree of $T$.  For $z, x_n \in f_{\mu}^{-1}(\{y\})$ with $x_n$ converging to $x \in T$, we have that $\mu_{[z,x_n]}([z,x_n])=0$.  On the other hand, since $T$ is a tree, we have that $[z,x_n]$ converge uniformly to $[z,x]$, and since $\mu$ is non-atomic, we certainly have that $y \mapsto \mu_{[z,y]}([z,y])$ is continuous.  Hence, $\mu_{[z,x]}([z,x])=0$, so $f_{\mu}^{-1}(\{y\})$ is closed.  Finally, $\{gf_{\mu}^{-1}(\{y\})$ is a transverse family, since $f_{\mu}$ is an equivariant function, and the control on ranks is given by Lemma \ref{L.CanonicalPeripheral}; the statement about stabilizers is immediate from equivariance.
\end{proof}






Suppose that an invariant measure $\mu$ on $T$ has an atom, \emph{i.e.} $\mu |_K$ has an atom, say $a \in K$.  As $\mu |_K$ is a finite measure, the $\Gamma(K)$-orbit of $a$ is finite, and since $K$ is a supporting subtree for $T$, we have that for any finite subtree $K' \subseteq T$, the orbit of $a$ meets $K'$ in a finite subset.  In particular, for any hyperbolic element $g \in \FN$, the orbit of $a$ meets any fundamental domain of the axis of $g$ in a finite set.  If $s \in T$ is such that the orbit of $s$ meets every finite subtree of $T$ in a finite set, then we say that the orbit of $s$ is \emph{sparse}, or that $s$ is sparse.  So, if $T$ has an invariant measure with an atom, then $T$ contains a sparse point.  More generally, for a direction $d$ based at a point $x \in T$, if for any finite subtree $K' \subseteq T$, one has that the set $\{gx|gd \cap K'$ is non-degnerate$\}$ is finite, then we say that the orbit of $d$, or just $d$, is \emph{sparse}.  So, if $s \in T$ is sparse, then every direction based at $s$ is sparse.  

\begin{lemma}\label{L.AtomsFreeSplit}
 Let $T \in \partial \cvn$ have dense orbits.  If $T$ contains a sparse direction, then $T$ has a transverse covering $\mathscr{Y}$, such that for each $Y \in \mathscr{Y}$, $Stab(Y)$ is a proper free factor of $\FN$.  In particular, there is a uniform bound on the number of sparse directions in $T$ and hence a uniform bound on the number of sparse points in $T$.
\end{lemma}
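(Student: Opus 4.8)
The plan is to build a transverse covering directly from a sparse direction and its orbit, using the sparseness to guarantee that the resulting skeleton is a \emph{simplicial} tree whose edges have trivial stabilizer, whence Lemma 3.2 of the excerpt (the free-splitting lemma) will supply the free-factor conclusion. First I would fix a sparse direction $d$ based at a point $x \in T$ and set $\mathscr{Y}_0 = \{gd : g \in \FN\}$ thought of as a collection of germs; the idea is to ``grow'' each germ $gd$ into the maximal subtree $Y$ consisting of those points reachable from $x$ along $gd$ without crossing another translate of the orbit of $d$. Concretely, I would use the orbit $\FN \cdot d$ to cut $T$: the closures of the components of $T$ minus the (discrete, by sparseness) set of branch germs in $\FN\cdot d$ form an $\FN$-invariant collection of non-degenerate subtrees, and I would take $\mathscr{Y}$ to be this collection (discarding any degenerate pieces). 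Sparseness of $d$ is exactly what makes the cutting locus meet every finite subtree in a finite set, so each finite arc of $T$ is covered by finitely many members of $\mathscr{Y}$, and distinct members meet in at most the common cut point — i.e.\ $\mathscr{Y}$ is a transverse covering. (One has to be slightly careful that no member is a single point; if some component were degenerate one replaces $d$ by a genuinely sparse direction adapted to it, or absorbs it — this is a routine bookkeeping point.)

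Next I would analyze the skeleton $S$ of $\mathscr{Y}$. Its vertices are the members $Y \in \mathscr{Y}$ together with the attaching points, and each attaching point is a cut point lying on some translate $gd$; since $T$ has dense orbits, arc stabilizers in $T$ are trivial (\cite{LL03}, quoted in Section 2), so the stabilizer of an attaching point $z$ acting on the germ $gd$ is the stabilizer of $z$ itself, which fixes the arc $[z, z']$ for $z'$ a point in the interior of $gd$ — hence is contained in an arc stabilizer and is trivial. Therefore every edge of $S$ has trivial stabilizer: $S$ is a free splitting of $\FN$. Now Lemma 3.2 of the excerpt applies verbatim and gives that $Stab(Y)$ is contained in a proper free factor for each $Y \in \mathscr{Y}$. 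To upgrade ``contained in a proper free factor'' to ``is a proper free factor'', I would invoke that $Y$ is the minimal subtree of $Stab(Y)$ acting on it (or note $Stab(Y) = Stab(T_{Stab(Y)})$ via Lemma \ref{L.Commensurator}/Corollary \ref{C.FactorStab}): a subgroup of $\FN$ that equals its own stabilizer in a free splitting and carries a vertex tree is a free factor. Alternatively, since $S$ is a free splitting, $Stab(Y)$ is a vertex group of a free splitting of $\FN$ and hence a free factor by Grushko/standard free-splitting theory — this is the cleanest route.

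Finally, for the uniform bounds: the number of orbits of members of a transverse covering is bounded by $3N-4$ via Corollary \ref{C.Finiteness1} (a transverse covering is in particular a transverse family), so $\mathscr{Y}$ has boundedly many orbits; and the number of attaching points in a fundamental domain of $S$ is bounded because $S$ is a minimal simplicial $\FN$-tree with trivial edge stabilizers, hence has at most $3N-3$ orbits of edges. Since every sparse direction gives rise to an attaching germ of such a covering (or more directly: two sparse directions in the same orbit-configuration would have to be identified, and the cut construction is canonical enough that all sparse directions show up among the finitely many edge-germs of $S$), this yields the uniform bound on sparse directions, and a fortiori on sparse points since each sparse point carries at most finitely many directions, each sparse, all accounted for by the edges of $S$ incident to the corresponding vertex.

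\medskip

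The main obstacle I anticipate is the first step: verifying rigorously that the ``cut along $\FN \cdot d$'' construction produces an honest transverse covering — in particular that the pieces are non-degenerate, that the cutting locus is genuinely discrete along every arc (this is where sparseness must be used with care, distinguishing sparseness of a direction from sparseness of its base point), and that one genuinely gets finitely many pieces covering each finite arc rather than an infinite nested family. The free-factor conclusion and the counting, by contrast, should follow formally once the transverse covering with a free simplicial skeleton is in hand.
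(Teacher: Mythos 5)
Your strategy is the same as the paper's: cut $T$ along the orbit of a sparse direction $d$, take the resulting pieces as a transverse covering $\mathscr{Y}$, and argue that the associated splitting is a free splitting so that the $\Stab(Y)$ are free factors, then bound by Corollary \ref{C.Finiteness1}. The difference is in how the free-splitting claim is established, and that is where your argument has a genuine gap.

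You assert that \emph{every} edge of the skeleton $S$ of $\mathscr{Y}$ has trivial stabilizer, arguing that ``the stabilizer of an attaching point $z$ acting on the germ $gd$ is the stabilizer of $z$ itself, which fixes the arc $[z,z']$.'' This conflates the edge stabilizer $\Stab(z)\cap\Stab(Y)$ with the point stabilizer $\Stab_T(z)$, and it only works for the edge from $z$ into the piece $Y$ that meets $z$ exactly in the germ $gd$. For that edge, an element preserving both $z$ and $Y$ must fix $gd$ (the unique germ of $Y$ at $z$), hence an arc, hence be trivial. But for the edge from $z$ to the ``central'' piece $Y'$ (the piece at $z$ containing all germs \emph{not} in the orbit of $d$), the edge stabilizer is $\Stab(z)\cap\Stab(Y')$, and since any element of $\Stab_T(z)$ preserves $Y'$ (it permutes the $d$-like germs at $z$ among themselves), this equals $\Stab_T(z)$, which a priori can be nontrivial for a very small tree with dense orbits (Lemma \ref{L.CanonicalPeripheral}). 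Nothing in your argument excludes that case, so ``$S$ is a free splitting with all edges trivial'' is not established, and the Grushko step you invoke to upgrade to ``$\Stab(Y)$ \emph{is} a free factor'' then has no footing. The paper sidesteps this cleanly: it blows up $d$ to an inserted segment $[0,1]$ (producing $T'$), notes that the stabilizer of the inserted segment is $\Stab(x)\cap\Stab(d)=\{1\}$ because arc stabilizers in $T$ are trivial, and then collapses the complementary pieces $\mathscr{X}$ to get a simplicial tree $S$ whose edges are \emph{exactly} the inserted segments; this $S$ manifestly has all edges trivial and vertex groups $\Stab(X)=\Stab(Y)$, so they are proper free factors. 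To rescue your version without the blow-up, you would need to collapse the (possibly nontrivial) $(z,Y_{\mathrm{central}})$ edges of the skeleton, observing that $\Stab(z)\subseteq\Stab(Y_{\mathrm{central}})$ so the vertex groups are unchanged, and only then apply the free-factor argument. Your counting at the end is fine, as is the overall structure; the one step that needs repair is the free-splitting verification.
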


\begin{proof}  



 Let $d$ be a sparse direction based at $x \in T$.  We blow-up the direction $d$ to a finite segment: consider $X=T\ssm d \coprod \overline{d}$, and let $x_0 \in T$ and $x_1 \in \overline{d}$ denote the images of $x$ in $X$, and glue $[0,1]$ to $X$ by attaching $i$ to $x_i$.  Extend this operation equivariantly over $T$ to get $T'$, and let $\mathscr{I}=\{g[0,1]\}$ denote the collection of added-on arcs.  Since $d$ was assumed sparse, we have that $T'$ is an $\FN$-tree.  Put $\mathcal{X}$ to be the family of closures of components of $T' \ssm \cup \mathscr{I}$; every element of $\mathscr{X}$ is non-degenerate, so $\mathscr{X}$ is a transverse family.  Every element of $\mathscr{X}$ has non-trivial stabilizer by Proposition \ref{P.Ergodic}, and, evidently, $\mathscr{X} \cup \mathscr{I}$ is a transverse covering of $T'$.  

 Since $T$ has dense orbits, arc stabilizers in $T$ are trivial, which implies that $Stab(\{x\}) \cap Stab(d) = \{1\}$; hence, the stabilizer of any $I \in \mathscr{I}$ is trivial.  By construction, collapsing each element of $\mathscr{I}$ to a point gives an equivariant map $p:T' \to T$, and the image of $\mathscr{X}$ under $p$ is a transverse covering $\mathscr{Y}$ of $T$.  On the other hand, collapsing each element of $\mathscr{X}$ to a point produces an equivariant map $s:T' \to S$ to a Bass-Serre tree $S$ for a free splitting for $\FN$, which is easily seen to be the skeleton of $\mathscr{Y}$.  Hence, the stabilizer of any element of $\mathscr{Y}$ is a proper free factor of $\FN$.

\end{proof}

The proof gives a non-trivial, proper factor $F' \leq \FN$ that dynamically reduces $T$; further, $F'$ is canonical in the sense that for any factor $F''$ reducing $T$, one certainly has that $F' \cap F'' \neq \{1\}$, after conjugation.  Note that mixing actions have no sparse directions.

\section{Characterizing Arational Trees}

In this section, we obtain our first main result, a characterization of arational trees; before doing this, we bring a few results that will allow us to reduce to the case of a geometric tree.

\begin{lemma}\label{L.DegenerateResidual}
 Let $T \in \partial \cvn$ be geometric; let $K \subseteq T$ be a finite supporting subtree; and let $\mathscr{Y}$ be a transverse family for $T$.  Suppose that the residual of $\mathscr{Y}$ in $K$ is infinite and degenerate, then:
 \begin{enumerate}
  \item [(i)] $T$ does not have dense orbits,
  \item [(ii)] if $I \subseteq T$ is an arc such that $I \ssm \mathscr{Y}$ has infinitely many components, then $I$ intersects the simplicial part of $T$ non-degenerately, and   
  \item [(iii)] for any arc $I \subseteq T$ and any $Y \in \mathscr{Y}$, only finitely many translates of $Y$ meet $I$ non-degenerately; in particular, $\mathscr{Y}$ necessarily contains infinitely many orbits.  
 \end{enumerate}
 
\end{lemma}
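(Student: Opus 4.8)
The plan is to isolate one structural fact and feed it to all three parts. First note that $\mathscr{Y}$ is non-empty (otherwise its residual in $K$ would be all of $K$, which is non-degenerate), so by Lemma~\ref{L.IndecompChar} the tree $T$ is not indecomposable and Corollary~\ref{C.GeomGraph} furnishes a transverse covering $\mathscr{X}$ of $T$ whose members are each either indecomposable or a simplicial edge; let $T_s$ denote the union of the simplicial edges of $\mathscr{X}$, i.e.\ the simplicial part of $T$. The next thing I would record is that the residual of $\mathscr{Y}$ is degenerate in \emph{every} arc, not just in $K$: a non-degenerate arc $J$ meeting no member of $\mathscr{Y}$ non-degenerately would have a sub-arc inside some translate $gK$ (as $K$ is supporting), and translating back gives a non-degenerate sub-arc of $K$ in the residual, a contradiction. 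In particular, for an arc $I$ the phrase ``$I\smf\mathscr{Y}$ has infinitely many components'' just means ``$I\smf\mathscr{Y}$ is infinite.''

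The key lemma I would prove is: \emph{if an arc $I$ satisfies $I\cap T_s$ degenerate, then $I\smf\mathscr{Y}$ is finite and only finitely many members of $\mathscr{Y}$ meet $I$ non-degenerately.} To prove it, cover $I$ by finitely many members of $\mathscr{X}$; the simplicial edges among them meet $I$ only degenerately, so up to finitely many points $I$ is covered by the indecomposable members $X_1,\dots,X_a$ touching it. For each $X_i$, the members of $\mathscr{X}\wedge\mathscr{Y}$ inside $X_i$ form a transverse family for $\mathrm{Stab}(X_i)\curvearrowright X_i$ (the remark after Lemma~\ref{L.CommonRefinement}), so indecomposability of $X_i$ and Lemma~\ref{L.IndecompChar} force each $X_i\cap W$ (for $W\in\mathscr{Y}$) to be degenerate or all of $X_i$. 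If some $X_i$ touching $I$ were contained in no member of $\mathscr{Y}$, then every $W$ would meet it degenerately and the interior of the arc $X_i\cap I$ would lie in the residual, contradicting degeneracy; hence each such $X_i$ satisfies $X_i\subseteq W_i$ for some $W_i\in\mathscr{Y}$, the map $X_i\mapsto W_i$ is injective (two members of a transverse family sharing a non-degenerate subtree coincide), and $I\smf\mathscr{Y}$ reduces to the finitely many endpoints of the $X_i\cap I$. Part (i) then follows: if $T$ had dense orbits, Corollary~\ref{C.GeomGraph} gives $T_s=\emptyset$, so the key lemma applies to the finitely many arcs whose union is $K$ and makes the residual of $\mathscr{Y}$ in $K$ finite, against the hypothesis. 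Part (ii) is exactly the contrapositive of the key lemma.

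For part (iii), fix an arc $I$ and $Y\in\mathscr{Y}$ and suppose infinitely many distinct translates $g_nY$ meet $I$ non-degenerately; the arcs $J_n:=g_nY\cap I$ pairwise meet in at most a point, so $\sum\diam(J_n)\le\diam(I)$ forces $\diam(J_n)\to 0$. Covering $I$ by finitely many members of $\mathscr{X}$: for all but finitely many $n$, $g_nY$ contains no indecomposable member of $\mathscr{X}$ touching $I$ (distinct translates containing the same indecomposable piece would coincide), so for such $n$ the arc $J_n$ lies, up to finitely many points, inside $T_s$; pigeonholing, fix a simplicial edge $e$ with $g_nY\cap I\cap e$ non-degenerate for infinitely many $n$. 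These intersections are pairwise-essentially-disjoint non-degenerate sub-segments of $e$, and since the interior of $e$ contains no branch point of $T$, apart from the at most two translates whose intersection with $e$ contains an endpoint of $e$ the subtree $g_nY$ cannot extend past the endpoints of $g_nY\cap e$ in any direction and so coincides with a sub-segment of $e$ of the fixed diameter $\diam(Y)>0$ — whence only finitely many of them can meet the compact set $I\cap e$, a contradiction. Thus only finitely many translates of $Y$ meet $I$. Finally, were $\mathscr{Y}$ to have only finitely many orbits, covering $K$ by finitely many arcs and applying the first part of (iii) to each orbit representative would show only finitely many members of $\mathscr{Y}$ meet $K$, again forcing the residual in $K$ to be finite; so $\mathscr{Y}$ has infinitely many orbits.

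I expect the edge case in part (iii) to be the main obstacle: controlling the translates of $Y$ that ``clip'' a simplicial edge with vanishingly small overlap. This is precisely where geometricity of $T$ enters, through Corollary~\ref{C.GeomGraph} and the absence of interior branch points in the simplicial edges, which is what pins such a translate down to an honest sub-segment of fixed length. The surrounding bookkeeping — the at most two translates whose clip reaches an endpoint of $e$, and matching up the residual computed in various arcs with the residual in $K$ — is routine but needs to be carried out carefully.
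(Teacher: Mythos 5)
Your proof is correct, and for part (iii) it takes a genuinely different route from the paper. For (i) and (ii) you and the paper are doing essentially the same thing: pull in the transverse covering $\mathscr{X}$ from Corollary~\ref{C.GeomGraph}, observe via Lemma~\ref{L.CommonRefinement} and Lemma~\ref{L.IndecompChar} that each indecomposable piece $X_i$ either lies entirely inside a single member of $\mathscr{Y}$ or meets every member degenerately, and deduce that an arc avoiding the simplicial part has finite residual and meets only finitely many members of $\mathscr{Y}$; the paper simply declares this ``immediate'' while you isolate it as a key lemma, which is a clean way to organize the computation. The substantive divergence is (iii). The paper forms $\mathscr{Y}'=\mathscr{X}\wedge\mathscr{Y}$, pushes it forward to the simplicial tree $T'$ obtained by collapsing the indecomposable pieces, and reads off (iii) from the correspondence between transverse families in $T'$ and proper subsets of the quotient graph $T'/\FN$ with non-degenerate components. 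You instead argue directly in $T$: pairwise essential disjointness of the $g_nY\cap I$ forces shrinking diameters, all but finitely many translates must then live (up to finitely many points) in the simplicial part $T_s$, pigeonholing pins infinitely many to one simplicial edge $e$, and the absence of interior branch points in $e$ traps each such $g_nY$ inside $e$ as a segment of the fixed length $\diam(Y)$, which is incompatible with $e$ having finite length. The paper's projection argument is slicker in that it outsources the bookkeeping to a statement about simplicial trees; your direct argument has the advantage of making the role of the ``no interior branch point'' condition in the Levitt/Guirardel decomposition completely explicit and of not presupposing the (slightly compressed) dictionary between transverse families in a simplicial tree and subsets of its quotient. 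One small thing to be careful with: when you say the trapped translate ``coincides with a sub-segment of $e$ of the fixed diameter $\diam(Y)>0$'', note that $Y$ could a priori have infinite diameter, in which case the containment $g_nY\subseteq e$ is already a contradiction on its own; the conclusion you draw is correct in both cases, but the phrasing should acknowledge that $\diam(Y)$ need not be finite.
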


\begin{proof}
 The contrapositions of statements (i) and (ii) follow immediately from Corollary \ref{C.GeomGraph}.  For statement (iii), since (ii) holds, we have by Corollary \ref{C.GeomGraph} that there is a (canonical) transverse family $\mathscr{X}$ consisting of the simplicial edges of $T$; set $\mathscr{Y}':= \mathscr{X} \wedge \mathscr{Y}$, then $\mathscr{Y}'$ satisfies the hypotheses of the statement as well, and $\mathscr{Y}'$ projects to a transverse family in the simplicial tree $T'$ got by collapsing all indecomposable subtrees of $T$ coming from Corollary \ref{C.GeomGraph}.  Now a transverse family in the simplicial tree $T'$ is equivalent to a subset $A \subsetneq T/\FN$ all of whose components are non-degenerate.  Statement (iii) immediately follows.
\end{proof}

\begin{prop}\label{P.NonGeomFamily}
 Let $T \in \partial \cvn$ have dense orbits, and let $\mathscr{Y}$ be a transverse family for $T$.  If there is an arc $I \subseteq T$ such that infinitely many elements of $\mathscr{Y}$ intersect $I$ non-degenerately, then for some $Y \in \mathscr{Y}$, $Stab(Y)$ is contained in a proper free factor of $\FN$.   
\end{prop}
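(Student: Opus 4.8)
The plan is to use the hypothesis to extract, inside the arc $I$, a measure-theoretic witness that produces a sparse point, and then invoke Lemma \ref{L.AtomsFreeSplit}. Here is the idea in more detail. First I would pass to the closure $\overline{\mathscr{Y}}$, which is still a transverse family and still has infinitely many members meeting $I$ non-degenerately. Since distinct members of a transverse family meet in at most a point, the non-degenerate intersections $\{Y_n \cap I\}_{n}$ for these infinitely many $Y_n$ are subarcs of $I$ with pairwise degenerate intersections; their total $\mu_{amb}$-length is therefore bounded by the length of $I$, and so the lengths $\ell(Y_n \cap I)$ tend to $0$. In particular $I$ fails to be covered by finitely many elements of $\overline{\mathscr{Y}}$ (otherwise only finitely many members would meet $I$ non-degenerately), so $\overline{\mathscr{Y}}$ is not a transverse covering.

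Next I would contrast the geometric and non-geometric cases. If $T$ were geometric, Lemma \ref{L.GeomResidual} applies: since $\overline{\mathscr{Y}}$ is not a transverse covering, its residual in $T$ is non-degenerate; but a non-degenerate residual arc $J$ disjoint from every member of $\overline{\mathscr{Y}}$, together with the infinitely-many-subarcs-of-$I$ picture, forces the situation of Lemma \ref{L.DegenerateResidual}(ii)--(iii) — in fact the hypothesis ``infinitely many elements meet $I$ non-degenerately'' is exactly the contrapositive of \ref{L.DegenerateResidual}(iii), so $T$ cannot be geometric with a non-degenerate residual either. So in the geometric case the hypothesis is simply vacuous (this is essentially the content of Lemma \ref{L.DegenerateResidual}(iii) and Corollary \ref{C.GeomGraph}), and one may assume $T$ is non-geometric. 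For a non-geometric $T$, the accumulation of infinitely many disjoint subarcs $Y_n \cap I$ of shrinking length, together with the transverse-family structure, should yield a direction $d$ in $T$ (based at an accumulation point $x$ of the endpoints of the $Y_n \cap I$) that is \emph{sparse} in the sense of the excerpt: only finitely many translates $gY$ of a fixed $Y \in \mathscr{Y}$ meet a given finite subtree non-degenerately (this is where the bound on total length across a fundamental domain enters), and since the $Y_n$'s exhaust a neighborhood of $x$ in the direction $d$ while staying in finitely many $\FN$-orbits would contradict finiteness, one sees the orbit of $d$ meets finite subtrees finitely. Then Lemma \ref{L.AtomsFreeSplit} produces a transverse covering $\mathscr{Y}'$ of $T$ all of whose vertex stabilizers are proper free factors, and one checks every $Y \in \mathscr{Y}$ — being ``transverse'' to the sparse direction — is swallowed into a single vertex tree of $\mathscr{Y}'$, so $Stab(Y)$ lies in a proper free factor.

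The main obstacle I anticipate is the passage from ``infinitely many translates of members of $\mathscr{Y}$ accumulate along $I$'' to ``$T$ contains a sparse direction.'' The subtlety is that the infinitely many intersecting subtrees might all lie in a single $\FN$-orbit, in which case one genuinely needs to rule out that their endpoints accumulate to a non-sparse point; here I would argue that if the accumulation point $x$ and direction $d$ were not sparse, then some fundamental domain for a hyperbolic element would meet the orbit of $d$ — hence the orbit of the $Y_n$'s — infinitely, contradicting that their total $\mu_{amb}$-length across that compact fundamental domain is finite (each translate contributing a definite positive amount bounded below along its own length). Making the bookkeeping in this length estimate precise — in particular handling the case where the $Y_n \cap I$ shrink but the $Y_n$ themselves are large and overlap many fundamental domains — is the delicate point. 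An alternative route that avoids the sparse-direction machinery would be to build an invariant measure $\mu$ supported on the union of the $\mathscr{Y}$-translates, show it has an atom (because its restriction to $K$ is finite while charging infinitely many disjoint pieces forces mass to concentrate), and again invoke Lemma \ref{L.AtomsFreeSplit} via the ``atom $\Rightarrow$ sparse point'' implication noted just before that lemma; I would keep this as a fallback if the direct geometric argument gets unwieldy.
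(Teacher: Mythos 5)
Your reduction of the geometric case is essentially fine (by Corollary \ref{C.GeomGraph} an arc of a geometric tree with dense orbits is covered by finitely many indecomposable pieces, each of which is swallowed by any member of $\mathscr{Y}$ it meets non-degenerately, so only finitely many members can cross $I$), but the heart of your argument --- that the accumulation of the shrinking subarcs $Y_n\cap I$ produces a \emph{sparse} direction, so that Lemma \ref{L.AtomsFreeSplit} applies --- has a genuine gap, and in fact the mechanism is false in general. Shrinking lengths only tell you that endpoints accumulate at some point $x$; there is no lower bound on how translates of a direction at $x$ meet a fixed finite subtree, so the ``definite positive amount bounded below'' in your length bookkeeping does not exist, and nothing forces $x$ or any direction at $x$ to be sparse. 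The appendix example ``Nesting, Interesting Residuals'' (the $F_6$ automorphism) is exactly a tree satisfying the hypothesis for which your mechanism breaks: infinitely many translates of the green family cross any arc, the residual is a Cantor-like set carrying the non-atomic ergodic measures $\mu_{red},\mu_{blue}$, the tree does not split as a graph of actions, and there is no sparse point or direction --- yet the conclusion of the proposition holds (the stabilizers are free factors). Your fallback has the same defect: a finite invariant measure charging infinitely many disjoint subarcs need not have an atom (Lebesgue measure already does this), so ``mass concentrates'' is not available. Finally, even if a sparse direction existed, your last step is unjustified: a member $Y$ of $\mathscr{Y}$ may cross translates of the blown-up direction, in which case $Y$ is not contained in a single vertex tree of the covering from Lemma \ref{L.AtomsFreeSplit} and $Stab(Y)$ need not be elliptic in the skeleton, so you do not get $Stab(Y)$ inside a proper factor for the $Y$ you care about.

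The paper's proof goes a different way, and you will need something like it: fix $Y$ whose translates cross every finite supporting subtree infinitely often (possible since transverse families in a dense-orbits tree have finitely many orbits, Corollary \ref{C.Finiteness1}), exhaust $Stab(Y)$ by finitely generated $H_k=\langle g_1,\dots,g_k\rangle$, and approximate $T$ by geometric resolutions $f_l:T_l\to T$ built from an exhaustion of $T$ by finite subtrees. Pulling $\mathscr{Y}$ back to $T_l$ gives a transverse family there, and Lemma \ref{L.DegenerateResidual} applied in the geometric tree $T_l$ produces a simplicial edge containing a subarc $e_0$ missed by the pulled-back family; collapsing the complement of the orbit of the interior of $e_0$ yields a nontrivial simplicial tree with trivial edge stabilizers (arc stabilizers in $T$, hence in $T_l$, are trivial) in which $H_k$ is elliptic, so $H_k$ lies in a proper free factor. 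Taking $F^k$ to be the smallest factor containing $H_k$ and invoking the chain condition of Lemma \ref{L.FactorProperties}, the increasing sequence $F^1\leq F^2\leq\cdots$ stabilizes at a proper factor containing all of $Stab(Y)$. Note that this argument never needs a sparse direction, which is precisely why it survives examples like the one above where no such direction exists.
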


\begin{proof}
 Choose $Y \in \mathscr{Y}$ to be such that any finite supporting subtree of $T$ meets the translates of $Y$ in infinitely many non-degenerate arcs.  Enumerate $Stab(Y)$ as $g_1,g_2,\ldots,g_i,\ldots$; put $H_k:=\langle g_1,\ldots, g_k\rangle$.  We will argue that there are proper free factors $F^1 \leq F^2\leq \ldots F^i \leq \ldots$, such that $H_k \leq F^k$; this sequence must stabilize by Lemma \ref{L.FactorProperties}.  

 Fix a basis $\mathcal{B}$ for $\FN$, and choose an invasion $K_1 \subseteq K_2 \subseteq \ldots \subseteq K_j \subseteq \ldots$ of $T$ by finite subtrees; let $\mathscr{K}_j$ denote the band complex associated to $(K_j,\mathcal{B})$, and let $f_j:T_j=T_{\mathscr{K}_j} \to T$ be the corresponding resolution.  We choose the $K_j$'s to ensure $Y \cap K_1$ is non-degenerate.  For each $k$, there is $j_k$ such that $g_i^{-1}K_l \cap K_l$ and $K_l \cap g_iK_l$ is non-empty for every $i \leq k$ and $l \geq j_k$.  This means that for $l \geq j_k$, the resolutions $f_l:T_l \to T$ restrict to give resolutions $f_l^{H_k}:(T_l)_{H_k} \to T_{H_k}$.  

 Recall that the maps $f_j$ are \emph{morphisms} of trees--every arc of $T_j$ can be subdivided into finitely many arcs such that $f_j$ is isometric on these smaller arcs; in particular, $f_j$ never maps a non-degenerate arc to a point.  The maps $f_j$ are also equivariant, hence the family $\mathscr{Y}_j$ of subtrees of $T_j$ given by $\mathscr{Y}_j:=\{f_j^{-1}(Y')|Y' \in \mathscr{Y}\}$ is a transverse family in $T_j$.  Note that for every $l \geq j_k$, every translate of $(T_l)_{H_k}$ is contained in some element of $\mathscr{Y}_l$.  Let $\mathscr{Y}_l'$ be the subfamily of $\mathscr{Y}_l$ consisting of those trees that contain a translate of $(T_l)_{H_k}$.  By Lemma \ref{L.DegenerateResidual} and our assumptions, there is a simplicial edge $e$ of $T_l$ such that $e \smf \mathscr{Y}_l$ contains a non-degenerate arc, say $e_0$.  Collapse the the components of the complement of the union of the translates of the interior of $e_0$.  The result is a non-trivial simplicial tree $T_l'$, where $H_k$ fixes a point.  As $T$ has dense orbits, arc stabilizers in $T$ are trivial, hence the same is true for $T_l$, so point stabilizers in $T_l'$ are proper free factors, \emph{i.e.} $H_k$ is contained in a proper factor.

 Let $F^k$ denote the smallest proper factor containing $H_k$; this exists by Lemma \ref{L.FactorProperties}.  Note for $l > k$, we have $H_k \leq H_l$, so $F^k \cap F^l$ contains $H_k$ and hence $F^k$, by definition of $F^k$.  Hence for $l > k$ $F_k \leq F_l$; therefore the sequence $F^1 \leq \ldots \leq F^k \leq \ldots $ eventually stabilizes with a proper factor $F'$ that contains every element of $Stab(Y)$.
\end{proof}

Proposition \ref{P.NonGeomFamily} will allow us to reduce our proof of Theorem \ref{T.Arational} to the case of geometric trees; the following result allows us to further reduce to the case of mixing trees.

\begin{prop}\label{P.NotMixing}
 Let $T \in \partial \cvn$ have dense orbits.  If $T$ is not mixing, then $T$ is dynamically reduced by a proper factor of $\FN$. 
\end{prop}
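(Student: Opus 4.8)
The plan is to reduce the proposition to the single claim that, when $T$ is not mixing, some subtree occurring in a transverse family for $T$ has stabilizer contained in a proper free factor of $\FN$. Granting this claim, the proof finishes as follows. Since $T$ is not mixing, Discussion~\ref{D.BuildTF} produces a non-degenerate arc $I\subseteq T$ with $X_I\subsetneq T$ a proper subtree and $\mathscr{X}_I=\{gX_I\}_{g\in\FN}$ a transverse family. By Proposition~\ref{P.Ergodic}, for every $Y\in\mathscr{X}_I$ the stabilizer $Stab(Y)$ is non-trivial and $Stab(Y)\curvearrowright Y$ is non-discrete. By the claim there is $Y$ with $H:=Stab(Y)$ contained in a proper free factor $F$. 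Then $T_H\subseteq T_F$ is non-degenerate, and the action $F\curvearrowright T_F$ is non-discrete, since it restricts to the non-discrete action $H\curvearrowright T_H$; in particular $T_F$ is not simplicial, so Lemma~\ref{L.Carry} gives that $F$ carries a leaf of $L(T)$. Corollary~\ref{C.CarryToReduce} then yields a proper free factor $F'\leq F$ that dynamically reduces $T$, which is the desired conclusion.

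To prove the claim I would split into two cases according to how ``spread out'' the transverse family $\mathscr{X}_I$ is. If there is an arc of $T$ meeting infinitely many members of $\mathscr{X}_I$ non-degenerately, then Proposition~\ref{P.NonGeomFamily} applies to $\mathscr{X}_I$ and directly produces a member whose stabilizer lies in a proper free factor. Otherwise $\mathscr{X}_I$ is locally finite: every finite arc of $T$ meets only finitely many of its members. In this case I would pass to a geometric resolution $f:\widehat T\to T$ and pull $\mathscr{X}_I$ back to a transverse family $\widehat{\mathscr{X}}$ in the geometric tree $\widehat T$, which still has dense orbits and is still not mixing (mixing passes to morphic images, so $T$ non-mixing forces $\widehat T$ non-mixing). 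Using the structure theory of geometric trees---Corollary~\ref{C.GeomGraph}, Lemma~\ref{L.GeomResidual}, and Lemma~\ref{L.DegenerateResidual}---I would refine $\widehat{\mathscr{X}}$ to a transverse covering of $\widehat T$ by indecomposable subtrees in which the preimage of $X_I$ is a proper union of pieces. Since $T$, hence $\widehat T$, has dense orbits, its arc stabilizers are trivial; blowing up any attaching points carrying non-trivial stabilizer as in the proof of Lemma~\ref{L.AtomsFreeSplit}, one should obtain a free splitting of $\FN$ in which $Stab(X_I)$ is elliptic, hence contained in a vertex group and therefore in a proper free factor.

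I expect the locally-finite case to be the main obstacle, for two reasons. First, one must control $Stab(X_I)$ under the morphism $\widehat T\to T$, making sure that the proper free factor extracted upstairs still contains $Stab(X_I)$ rather than merely the stabilizer of its preimage in $\widehat T$; this is the same bookkeeping with resolutions that appears in the proof of Proposition~\ref{P.NonGeomFamily}. Second, and more seriously, the skeleton of the transverse covering one gets from Corollary~\ref{C.GeomGraph} may have non-trivial edge stabilizers (the attaching points of indecomposable pieces can have non-trivial, though infinite-index, point stabilizers by Proposition~\ref{P.Indecomp}), so it is not automatically a free splitting; converting it into one is where the blow-up argument of Lemma~\ref{L.AtomsFreeSplit} is needed, and one must check that this blow-up does not destroy minimality of the ambient $\FN$-action. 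Once the claim is in hand, the reduction in the first paragraph, via Lemma~\ref{L.Carry} and Corollary~\ref{C.CarryToReduce}, completes the proof.
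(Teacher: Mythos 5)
Your opening reduction and the "spread out" case match the paper exactly: produce $X_I$ via Discussion~\ref{D.BuildTF}, and if $\mathscr{X}_I$ meets some arc infinitely often, apply Proposition~\ref{P.NonGeomFamily}, then finish with Proposition~\ref{P.Ergodic}, Lemma~\ref{L.FactorFamilies} and Corollary~\ref{C.CarryToReduce}. The locally finite case, however, has genuine gaps that the paper handles differently and more carefully.

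First, your assertion that the geometric resolution $\widehat{T}$ ``still has dense orbits'' is false in general. If $T$ has dense orbits but is not geometric, then every geometric resolution $T_K\to T$ is a non-isometric morphism, and as the paper notes in the proof of Theorem~\ref{T.Arational} this forces $T_K$ to have a non-trivial simplicial part. The paper turns this to advantage rather than assuming it away: it shows that if the pulled-back transverse family meets the simplicial part of $T_n$, a proper factor is already produced by the argument of Proposition~\ref{P.NonGeomFamily}, and otherwise one is reduced to $T$ itself being geometric. Your proof skips this branch entirely.

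Second, and more seriously, your mechanism for converting the skeleton into a free splitting does not work. Lemma~\ref{L.AtomsFreeSplit} blows up a \emph{sparse} direction; an attaching point of an indecomposable piece that carries a cyclic stabilizer has no reason to be sparse, and in the generic surface-type situation it is not. The paper's actual argument in this final case is entirely different: after reducing to $T$ geometric with a transverse covering by indecomposable pieces in at least two orbits, it analyzes the minimal components of the dual band complex. If one component is thin, the others land in a proper factor; if all are of surface type, it invokes Lemma~4.1 of~\cite{BF94} to produce either a minimal component or a point stabilizer contained in a proper factor, and concludes via Corollary~\ref{C.CarryToReduce}. This surface-type input has no counterpart in your outline, and without it the case where every attaching point has nontrivial cyclic stabilizer is not closed. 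Note also that the paper separately handles the single-orbit case where $\overline{\mathscr{X}}_I$ is a transverse covering of $T$: there it observes that $X_I$ cannot be closed (distinct translates are disjoint and cannot cover a connected supporting subtree if closed), so the skeleton automatically has an edge with trivial stabilizer; your locally finite branch does not separate this case from the two-orbit one, which is precisely where the extra geometric/band-complex machinery is needed.
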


\begin{proof}
 Since $T$ is not mixing, there is a non-degnerate arc $I \subseteq T$, such that $X_I$ as in Discussion \ref{D.BuildTF} generates a transverse family $\mathscr{X}$ in $T$; we have that if $gX_I \neq X_I$, then $gX_I \cap X_I =\emptyset$.  According to Proposition \ref{P.NonGeomFamily}, if $\mathscr{X}$ meets some arc $J$ of $T$ in infinitely many non-degenerate segments, then $Stab(X_I)$ is contained in a proper factor of $\FN$.  On the other hand, Proposition \ref{P.Ergodic} gives that $X_I$ is not simplicial, hence there is a proper factor of $\FN$ acting non-simplicially on $T$, so Lemma \ref{L.FactorFamilies} and Corollary \ref{C.CarryToReduce} gives that $T$ is dynamically reduced by proper factor.  

 Hence, we assume that for any finite non-degenerate arc $J$ of $T$, $\mathscr{X}$ meets $J$ finitely many times.  Suppose first that $\mathscr{X}$ meet every arc of $T$.  Let $K$ be a finite supporting subtree of $T$.  Evidently, there are finitely many $g_1X_I, \ldots, g_rX_I$ such that $R:=K \ssm (g_1X_I \cup \ldots \cup g_rX_I)$ is a finite set; otherwise, there is an arc of $T$ met by $\mathscr{X}$ infinitely many times, and we are in the above case.  Consider the case $R=\emptyset$.  Since distinct elements of $\mathscr{X}$ are disjoint and since each element of $\mathscr{X}$ is a proper subtree of $T$, we get an arc $J \subseteq K$ and an element $gX_I$ such that $gX_I \cap K$ is not closed in $K$, giving that $X_I$ is not closed.  On the other hand, we have that $\overline{\mathscr{X}}$ is a transverse covering of $T$, and for any $x \in \overline{X_I} \ssm X_I$, we have that $Stab(x) \cap Stab(X_I)=\{1\}$.  Hence the skeleton of the corresponding graph of actions structure on $T$ has an edge with trivial stabilizer, and so a proper factor reduces $T$; see the proof of Lemma \ref{L.AtomsFreeSplit}.

 Now suppose that there is an arc $J \subseteq T$ that is not met by $\mathscr{X}$.  This gives that $X_J$ is a proper subtree of $T$ not meeting $\mathscr{X}$ such that the transverse family $\mathscr{Y}$ generated by $X_J$ is disjoint from $\mathscr{X}$, hence $\mathscr{Z}=\mathscr{X}\cup\mathscr{Y}$ is a transverse family.  Choose a basis for $\FN$ and an invasion $K_n$ of $T$ by finite subtrees; let $T_n$ be the geometric tree corresponding to $K_n$.  The preimage of $\mathscr{Z}$ under the resolving map is a transverse family $\mathscr{Z}_n$ in $T_n$, and $\mathscr{Z}_n$ contains at least two orbits of trees.  It follows from the proof of Proposition \ref{P.NonGeomFamily} that if some $Z \in \mathscr{Z}_n$ meets the simplicial part of $T_n$, then some $Z' \in \mathscr{Z}_n$ has stabilizer contained in a proper factor of $\FN$; this implies that the same holds for some $Z'' \in \mathscr{Z}$.  Hence, we reduce to the case that $T$ is geometric.

 Since $T$ has dense orbits, Corollary \ref{C.GeomGraph} gives that $T$ has a transverse covering $\mathscr{W}$ by indecomposable subtrees; since $\mathscr{Z}$ contains two orbits of trees, the same holds for $\mathscr{W}$.  Choose a band complex $\mathcal{X}$ to which $T$ is dual, then since $\mathscr{W}$ contains two orbits of trees, $\mathcal{X}$ contains at least two minimal components.  If some minimal component is thin, then some other component is contained in a proper factor.  Hence, we assume that every component is surface.  In this case, Lemma 4.1 (see also Corollary 5.2) of \cite{BF94} gives that some minimal component of $\mathcal{X}$ or some point stabilizer in $T$ is contained in a proper factor of $\FN$; in either case, we can conclude by Corollary \ref{C.CarryToReduce}.  
\end{proof}

If $T$ is simplicial, then Corollary \ref{C.Simp} (from the Appendix) gives that $T$ is not arational.  More generally, if $T$ does not have dense orbits, then Lemma \ref{L.GoodGraph} gives that $T$ is a graph of actions with vertex actions either simplicial or with dense orbits; further, the skeleton $S$ of the graph of actions structure on $T$ is very small.  By Corollary \ref{C.Simp}, every edge stabilizer in $S$ is contained in a proer free factor, and by Corollary \ref{C.CarryToReduce}, there is a proper factor reducing $T$.


If $T$ is free, then $T$ is arational if and only if $T$ is indecomposable; indeed, Proposition \ref{P.Indecomp} gives that free and indecomposable implies arational.  On the other hand, Proposition \ref{P.NotMixing} gives that arational implies mixing, and Lemma \ref{L.MixGraph} gives that $T$ must be indecomposable--if $T$ is mixing but not indecomposable, then $T$ has a transverse covering, whose skeleton is a free splitting, since $T$ is free.  So, to have a characterization of arational trees, we need only understand non-free arational trees.

\begin{thm}\label{T.Arational}
 Let $T \in \partial \cvn$.  The following are equivalent.
 \begin{enumerate}
  \item [(i)] $T$ is arational,
  \item [(ii)] $T$ is indecomposable, and if $T$ is not free, then $T$ is dual to an arational measured foliation on a surface with exactly one boundary component.
 \end{enumerate}

\end{thm}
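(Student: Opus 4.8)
The plan is to reduce, in stages, to a geometric mixing tree and then invoke the structure theory of band complexes. By the discussion preceding the theorem, we may assume $T$ has dense orbits, and the free case is already handled there; so we assume $T$ is not free. First I would prove (ii) $\Rightarrow$ (i): if $T$ is dual to a minimal filling (arational) measured foliation on a once-punctured surface $S$, then $T$ is indecomposable by the lemma of Guirardel (the band complex is a pure surface complex carrying a minimal foliation), so by Lemma \ref{L.IndecompChar} there is no transverse family. A free factor $F$ dynamically reducing $T$ would produce $T_F$ with dense orbits generating a transverse family (Lemma \ref{L.FactorFamilies}), contradicting indecomposability; a peripheral factor $F$ fixing a point of $T$ would have to be (conjugate into) the cyclic peripheral subgroup $\partial S$ by Lemma \ref{L.CanonicalPeripheral} and the fact that point stabilizers of a surface dual tree are conjugates of the boundary curve — but a maximal cyclic subgroup of $\FN$ is not a proper free factor when $S$ has one boundary component, since $\pi_1(S)$ is free of rank $N$ and the boundary is not primitive. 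Hence $T$ is arational.

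For (i) $\Rightarrow$ (ii): assume $T$ is arational. By Proposition \ref{P.NotMixing}, $T$ is mixing (else a proper factor dynamically reduces it). If $T$ is not indecomposable, then by Lemma \ref{L.MixGraph} $T$ has a transverse covering $\mathscr{Y}$ with a single orbit, whose skeleton $S$ is a simplicial tree; if any edge of $S$ has trivial stabilizer we get a free splitting and $Stab(Y)$ lies in a proper free factor (Lemma \ref{L.AtomsFreeSplit}-style argument / the lemma on free splittings), contradicting arationality via Corollary \ref{C.CarryToReduce}; and the case of cyclic edge stabilizers is excluded because, with a single orbit of vertex trees, collapsing $Y$ exhibits $\FN$ as an ascending HNN or amalgam over a cyclic group in a way that forces a free-factor reduction — here I would use that $T$ has trivial arc stabilizers (dense orbits), so attaching-point stabilizers $Stab(x)\cap Stab(Y)$ cannot both be nontrivial. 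Thus $T$ is indecomposable. Next, to get geometricity: $T$ indecomposable with dense orbits means every transverse family is empty, and I would approximate $T$ by geometric trees $T_n$ via resolutions $f_n : T_n \to T$ as in the proof of Proposition \ref{P.NonGeomFamily}; if $T$ were not itself geometric one shows (using Proposition \ref{P.NonGeomFamily} and Lemma \ref{L.DegenerateResidual}) that the preimage of a suitable residual structure would give a transverse family in $T$ pushing down from $T_n$ whose stabilizer lies in a proper factor — contradiction. So $T$ is geometric and indecomposable; by Proposition \ref{P.WeakImanishi} it admits an exact resolution by a pure minimal band complex $\mathscr{K}$.

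It remains to identify $\mathscr{K}$. A pure minimal band complex decomposes into minimal components each of which is either \emph{thin} or of \emph{surface} type. If there is a thin component, then by Lemma 4.1 / Corollary 5.2 of \cite{BF94} some other component or some point stabilizer is carried by a proper free factor, so by Corollary \ref{C.CarryToReduce} a proper factor reduces $T$ — contradicting arationality. Hence $\mathscr{K}$ is a single minimal surface component, i.e.\ $T$ is dual to a measured foliation on a compact surface $S$ with $\pi_1(S)=\FN$. The surface has at least one boundary component (else $\pi_1(S)$ is not free); if it had two or more boundary components, the subsurface filling argument — or equivalently, the fact that $\partial S$ would then contain more than one $\FN$-orbit of boundary curves, each generating a cyclic peripheral subgroup, \emph{and} a boundary-parallel curve would be contained in a proper free factor — shows that the corresponding boundary subgroup peripherally reduces $T$, contradicting arationality again. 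Finally the foliation must be arational (minimal and filling): a non-minimal or non-filling foliation yields a subsurface component giving a transverse family (hence $T$ not indecomposable) or a curve carried by a proper factor, contradiction. This gives (ii).

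The main obstacle I expect is the geometric-approximation step — showing that an indecomposable non-geometric tree in $\partial\cvn$ would admit a transverse family with stabilizer in a proper factor, pulling back residual data along the resolutions $f_n$ and controlling how the simplicial parts of $T_n$ interact with $\mathscr{Y}_n$; this is exactly the technical heart shared with Propositions \ref{P.NonGeomFamily} and \ref{P.NotMixing}, and getting the bookkeeping of orbits and residuals right (via Lemma \ref{L.DegenerateResidual} and Corollary \ref{C.Finiteness1}) is where the real work lies. A secondary subtlety is the careful handling of the one-versus-several boundary components dichotomy, which requires knowing precisely which subgroups of $\pi_1(S)$ are free factors of $\FN$.
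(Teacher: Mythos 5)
Your direction (ii) $\Rightarrow$ (i) and your endgame in the surface case are fine and essentially match the paper, but the two steps you yourself flag as the heart of (i) $\Rightarrow$ (ii) each contain a genuine gap. First, the attempt to prove indecomposability directly from mixing by analyzing the skeleton $S$ of the transverse covering from Lemma \ref{L.MixGraph} breaks down in the non-trivial edge stabilizer case. The skeleton of a transverse covering need not be very small (the ``rigid extension'' example in the Appendix is mixing, not indecomposable, and its skeleton has a rank-two edge group), so Corollary \ref{C.Simp} is not available; moreover a nontrivial subgroup fixing a point of $T$ need not lie in any proper free factor (see the second surface example in the Appendix, where point stabilizers fill), so ``cyclic edge group $\Rightarrow$ free-factor reduction'' does not follow. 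Your supporting claim that trivial arc stabilizers force the attaching-point stabilizers $Stab(x)\cap Stab(Y)$ to be trivial on one side is also false: for surface-type vertex trees the attaching points carry cyclic boundary stabilizers on both sides. The paper avoids this by \emph{not} proving indecomposability at this stage: it first reduces to geometric $T$, uses mixing to get a single minimal component of the band complex, and in the non-pure case uses the Rips thin/surface dichotomy--in the surface case the attaching subgroups are boundary-cyclic, so the skeleton \emph{is} very small and Corollary \ref{C.Simp} applies; indecomposability of an arational $T$ then comes out of purity at the end rather than being an input.

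Second, your geometricity step is not a proof as sketched. Pushing a transverse family (or ``residual structure'') from the resolving trees $T_n$ down to $T$ does not in general yield a transverse family in $T$, because the resolutions fold; and Proposition \ref{P.NonGeomFamily} takes as hypothesis a transverse family in $T$ itself, which is exactly what an indecomposable $T$ does not have (Lemma \ref{L.IndecompChar}), so it cannot be the engine here. The paper's argument is different and crucially uses that $T$ is not free: choose a geometric resolution $f:T'\to T$ in which a nontrivial point stabilizer of $T$ is elliptic. If every vertex piece of $T'$ had dense orbits, then non-exactness of $f$ (i.e.\ non-geometricity of $T$) would force a fold, which would produce a nontrivial arc stabilizer in $T$, contradicting dense orbits; hence $T'$ has a simplicial part, necessarily with trivial edge stabilizers, so point stabilizers of $T'$--in particular the chosen nontrivial point stabilizer of $T$--lie in proper free factors, and $T$ is peripherally reduced, contradicting arationality. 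Without this (or an equivalent) argument your reduction to the geometric case, and hence the appeal to Proposition \ref{P.WeakImanishi}, is incomplete.
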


\begin{proof}
 We need only handle the case where $T$ is not free and is mixing, so we assume this.  First, we show that if $T$ is not geometric, then $T$ is not arational.  Take a resolution $f:T' \to T$ , with $T'$ geometric, in which a point stabilizer in $T$ fixes a point in $T'$; we want to see that the decomposition of $T'$ as in Lemma \ref{L.GoodGraph} has a simplicial component.  Toward contradiction, suppose that every component of $T'$ is minimal, hence $T'$ has dense orbits.  Since $T$ is not geometric, $f$ is not exact; since $f$ is a morphism of trees, this means that there is some arc $I \subseteq T'$ that is folded by $f$.  It is easy to see that in this case, $T$ cannot have trivial arc stabilizers, which contradicts $T$ having dense orbits.  Hence, $T'$ has a simplicial component.  Edge stabilizers in $T'$ must be trivial, since $T$ has trivial arc stabilizers, and we conclude that point stabilizers in $T'$ are contained in proper free factors, hence $T$ is not arational.

 Hence, we are left to understand the case that $T$ is geometric; suppose that $T$ is dual to a band complex $X$.  It is an easy exercise in definitions to check that mixing implies that $X$ has exactly one minimal component; see Lemma \ref{L.MixGraph} and Corollary \ref{C.GeomGraph}.  Now, Corollary \ref{C.GeomGraph}, Lemma \ref{L.MixGraph}, and Discussion \ref{D.BuildTF} give that if $X$ is not pure, then $T$ has a transverse covering; let $S$ denote its skeleton.  Let $Y$ be a representative for the transverse covering of $T$ by indecomposable trees; then $Y$ is geometric and dual to a pure minimal band complex $M \subseteq X$ \cite{Gui08}.  Rips Theorem gives that $M$ either is thin or surface type (toral is impossible here).  If $M$ is of thin type, then running the Rips machine on $X$ eventually produces a band that is disjoint from any loop contained in a leaf; it follows that every point stabilizer in $T$ is contained in a proper free factor of $\FN$, so $T$ is not arational by Corollary \ref{C.CarryToReduce}.

 Hence, $M$ must be of surface type, and up to homotopy, $M$ is a foliated surface, and $X$ is an adjunction space $M \coprod_f G$, where $G$ is a graph, and $f$ is a map from the boundary components of $M$ into $G$.  Since every boundary component of $M$ represents a cyclic subgroup of $\FN$, we get that $S$ must be very small, and Corollary \ref{C.Simp} allows us to conclude that $T$ is not arational.  
 
 Finally, we are in the case that $X$ is pure; in this case $T$ is dual to a surface $M$ carrying a minimal measured foliation, and the condition that $T$ is arational is equivalent, via well-know facts about surfaces, to $M$ having one boundary component (else all boundary components represent conjugacy classes of free factors).
\end{proof}

\section{Controlling Factor Reducing Systems}

The aim of this section is to provide control over all factor reducing systems for a given tree.  For $T \in \partial \cvn$ let $\mathcal{R}(T)$ denote the set of all factors reducing $T$; we will show:

\begin{thm}\label{T.Control}
 There is a tree $T' \in \partial \cvn$ such that for any $F \in \mathcal{R}(T)$, some element of $F$ fixes a point in $T'$.
\end{thm}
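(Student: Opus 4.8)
The plan is to reduce everything to a statement about peripheral (point-stabilizer) subgroups, where the canonical simplicial tree is already available, and to show that every dynamically reducing factor contains an element that is elliptic in that tree. First I would dispose of the degenerate cases: if $T$ does not have dense orbits, then Lemma \ref{L.GoodGraph} exhibits $T$ as a graph of actions with very small skeleton $S$; every edge stabilizer of $S$ sits in a proper free factor (Corollary \ref{C.Simp}), and taking $T'$ to be $S$ itself (or a very small simplicial refinement of it) handles this case, since every vertex action with dense orbits is then treated by the argument below applied to the stabilizer. So assume $T$ has dense orbits, so that arc stabilizers are trivial by \cite{LL03}, and every finitely generated $H$ has a well-defined minimal subtree $T_H$.

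Next I would pin down the target tree $T'$. By the remarks preceding Theorem \ref{T.Control} (and the convention adopted after Corollary \ref{C.FactorStab}), for any $T \in \partial\cvn$ there is a simplicial $T' \in \partial\cvn$ in which every point stabilizer of $T$ is elliptic; concretely one collapses all the indecomposable (or more generally all the non-simplicial) pieces coming from the structure theory, and one may further refine so that the finitely many canonical peripheral subgroups from Lemma \ref{L.CanonicalPeripheral} are each elliptic. The content of the theorem is then: \emph{every} $F \in \mathcal{R}(T)$ — not just the peripheral ones — contains an element fixing a point of $T'$. For $F$ peripheral this is immediate, since $F$ itself fixes a point of $T$, hence is contained in one of the canonical peripheral subgroups and is elliptic in $T'$ by construction. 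So the real work is the dynamical case.

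So suppose $F$ dynamically reduces $T$; by the convention we may take $F = \Stab(T_F)$, and $T_F$ is non-degenerate with dense orbits for the $F$-action, and $\{gT_F\}$ is a transverse family $\mathscr{Y}$ in $T$. The plan is to produce a non-trivial element of $F$ that is elliptic in $T$ itself, which then suffices since such an element lies in a canonical peripheral subgroup and hence is elliptic in $T'$. If $\mathscr{Y}$ meets some arc of $T$ in infinitely many non-degenerate pieces, Proposition \ref{P.NonGeomFamily} gives that $\Stab(T_F)=F$ is contained in a proper free factor $\widehat F$; but $F$ acts on $T_F$ with dense orbits, so $\widehat F$ carries a leaf of $L(T)$ via case (ii) of Lemma \ref{L.Carry}, and then Corollary \ref{C.CarryToReduce} applied inside $\widehat F$ — together with the fact that the resulting peripheral factor is elliptic in $T$ — finishes this subcase (one in fact gets elliptic elements of $\widehat F$, hence conjugates into the peripheral list). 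Otherwise $\mathscr{Y}$ meets every arc of $T$ in only finitely many pieces, and I would run an argument parallel to the second half of the proof of Proposition \ref{P.NotMixing}: resolve $T$ by geometric trees $T_n$, pull $\mathscr{Y}$ back to a transverse family in $T_n$ with at least one orbit of non-simplicial trees, and either the residual forces a simplicial edge with trivial stabilizer to appear (giving a free splitting in which $F$-conjugates are elliptic, by the skeleton argument in Lemma \ref{L.AtomsFreeSplit}), or $T_F$ is itself dual to a pure minimal band complex; invoking Rips theory and Lemma 4.1 of \cite{BF94} as in Proposition \ref{P.NotMixing}, a boundary curve or a point stabilizer of $T_F$ lies in a proper factor and is elliptic in $T$. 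In every branch we extract a non-trivial element of $F$ elliptic in $T$, hence elliptic in $T'$.

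The main obstacle I anticipate is the bookkeeping in the "finitely many pieces" subcase: making sure that when one collapses to a simplicial tree to find the free (or very small) splitting, the elliptic element one produces actually lies in $F$ itself and not merely in some a priori larger or unrelated subgroup, and that passing through the geometric resolutions $f_n : T_n \to T$ does not destroy this — one needs the resolutions to restrict to $T_F$ (as arranged in the proof of Proposition \ref{P.NonGeomFamily}) and one needs triviality of arc stabilizers in $T$ to guarantee triviality of edge stabilizers upstairs. A secondary subtlety is that the single tree $T'$ must work simultaneously for all $F \in \mathcal{R}(T)$; this is handled by noting that all the elliptic elements produced lie in the \emph{finitely many} canonical peripheral subgroups of Lemma \ref{L.CanonicalPeripheral}, so a single refinement of the collapse of $T$ in which those finitely many subgroups are elliptic does the job.
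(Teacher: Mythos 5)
There is a genuine gap, and it sits at the heart of your reduction. Your plan is to produce, for every dynamically reducing factor $F$, a non-trivial element of $F$ that is elliptic \emph{in $T$ itself}, and then to take for $T'$ a simplicial tree in which the (finitely many, canonical) point stabilizers of $T$ are elliptic. But that reduction is false in general: a dynamically reducing factor need not contain any elliptic element of $T$. For instance, take a free indecomposable $\FN$-tree and perform the ``Basic Extension'' of the Appendix at two points in different orbits; the resulting $F_{N+1}$-tree is free, yet the image of $\FN$ is a dynamical reducing factor, so no non-trivial element of it fixes a point of $T$. In such cases the ellipticity asserted by Theorem \ref{T.Control} must come from a \emph{new} tree (e.g.\ the skeleton of the transverse covering generated by $T_F$, a free splitting in which $F$ is elliptic), not from point stabilizers of $T$. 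You implicitly concede this in your ``otherwise'' branch (``a free splitting in which $F$-conjugates are elliptic''), but that concession destroys your uniformity argument: your single $T'$ was supposed to work for all $F$ precisely because every produced elliptic element lay in one of the canonical peripheral subgroups of Lemma \ref{L.CanonicalPeripheral}; once some factors only become elliptic in auxiliary splittings, you have no mechanism for combining the various auxiliary trees into one tree $T'$, and that combination is exactly the content of the theorem. The paper's proof is organized around this difficulty: it forms the single transverse family $\mathscr{T}_{\mathcal{MD}}$ of minimal dynamical subtrees (Proposition \ref{P.MinTF}, finitely many orbits by Corollary \ref{C.Finiteness1}) and then builds one tree --- a projection $T_{\mu}$ along an ergodic measure, a skeleton of a transverse covering, or a simplicial tree extracted by the resolution/residual argument of Proposition \ref{P.NonGeomFamily} --- that simultaneously makes all of these stabilizers elliptic while keeping the point stabilizers of $T$ elliptic; an arbitrary $F \in \mathcal{R}(T)$ is then handled because it is peripheral or contains a minimal dynamical factor.

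Two secondary problems. First, your ``concrete'' construction of $T'$ (collapse the non-simplicial pieces coming from the structure theory, then refine) is not available for a general tree with dense orbits: such a tree need not split as a graph of actions at all (see the ``Nesting, Interesting Residuals'' example, which admits no transverse covering), so making the point stabilizers of $T$ elliptic in a very small simplicial tree already requires the measure/resolution machinery rather than a collapse. Second, your first subcase is not doing what you claim: when the family $\{gT_F\}$ meets an arc in infinitely many pieces, Proposition \ref{P.NonGeomFamily} only places $F=\Stab(T_F)$ inside a proper factor $\widehat F$, and Corollary \ref{C.CarryToReduce} then yields another \emph{reducing} factor inside $\widehat F$; neither statement produces a non-trivial element of $F$ fixing a point of $T$, nor even an elliptic element of $\widehat F$. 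In the paper that argument is used differently --- to manufacture a simplicial tree in which $\Stab(T_F)$ itself is elliptic and point stabilizers of $T$ remain elliptic --- which is the form of output your proof actually needs.
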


Say that a tree $T'$ as in the theorem \emph{controls} $T$; note that if every element of $\mathcal{R}(T)$ is peripheral, then $T$ controls $T$, so we need only consider the case where $T$ is dynamically reduced by some factor.  Put $\mathcal{D}(T)$ to be the set of all factors dynamically reducing $T$, and let $\mathcal{MD}(T) \subseteq \mathcal{D}(T)$ denote the set of minimal (with respect to inclusion) factors dynamically reducing $T$.  Let $\mathscr{T}_{\mathcal{MD}}$ denote the corresponding invariant family of sub-trees of $T$; note that each orbit in $\mathscr{T}_{\mathcal{MD}}$ is a transverse family.

\begin{prop}\label{P.MinTF}
 $\mathscr{T}_{\mathcal{MD}}$ is a transverse family.
\end{prop}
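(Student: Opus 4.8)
The plan is to show that any two subtrees $Y = T_F$ and $Y' = T_{F'}$ coming from distinct minimal dynamically reducing factors $F, F' \in \mathcal{MD}(T)$ that are \emph{not} translates of one another intersect in at most one point. Suppose toward a contradiction that $Y \cap Y'$ is non-degenerate, i.e. contains an arc $I$. By Lemma~\ref{L.FactorFamilies} both $Y$ and $Y'$ generate transverse families in $T$, and by Lemma~\ref{L.CommonRefinement}(ii) the intersection family $\mathscr{X}_Y \wedge \mathscr{X}_{Y'}$ is itself a transverse family; in particular, for each translate $gY$ the collection of members of this wedge contained in $gY$ is a transverse family for the action $F^g \curvearrowright gY$ (using Corollary~\ref{C.FactorStab} to identify $\Stab(gY)$ with $F^g$). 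The idea is to extract from this wedge a \emph{proper} transverse family inside $Y$, which will contradict minimality of $F$.

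The key step is to rule out the two degenerate possibilities. First, if $Y$ were itself a member of $\mathscr{X}_{Y'}$, i.e. $Y \subseteq Y'$ (up to translation), then $F \leq F'$ after conjugation by Corollary~\ref{C.FactorStab}; since both are minimal, $F$ and $F'$ would then be conjugate, contradicting our assumption that $Y, Y'$ lie in different orbits. So $Y$ is not contained in any translate of $Y'$, and symmetrically $Y'$ is not contained in any translate of $Y$. It follows that the members of $\mathscr{X}_{Y'}$ meeting $Y$ non-degenerately give \emph{proper} subtrees of $Y$, so $\{\, Z \in \mathscr{X}_Y \wedge \mathscr{X}_{Y'} : Z \subseteq Y \,\}$ is a transverse family in $Y$ consisting of proper subtrees, and it is non-empty because it contains $I' := (Y \cap Y')$ viewed as $Y \cap Y'$ which is non-degenerate. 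Now $F \curvearrowright Y$ has dense orbits (as $F$ dynamically reduces $T$), so Lemma~\ref{L.IndecompChar} does \emph{not} directly apply unless $Y$ is indecomposable; instead I would argue as follows. Since $Y$ has dense orbits, Lemma~\ref{L.NonDegenerateSupports} and Corollary~\ref{C.Finiteness1} bound the orbits in this transverse family, and more to the point, the residual or a sub-tree of this family generates an $F$-invariant subtree on which a proper free factor of $F$ acts with dense orbits (via Lemma~\ref{L.GoodGraph} applied to the splitting of $Y$ induced by the transverse family, exactly as in the discussion following Corollary~\ref{C.FactorStab}); by Lemma~\ref{L.FactorProperties}(ii) that proper free factor of $F$ is a proper free factor of $\FN$, and it dynamically reduces $T$ by Lemma~\ref{L.FactorFamilies}. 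This contradicts the minimality of $F$ in $\mathcal{MD}(T)$.

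The main obstacle, I expect, is the middle step: producing from a non-trivial transverse family inside $Y$ an honest \emph{proper} free factor of $F$ that still acts with dense orbits on its minimal subtree. One has to be careful that the transverse family $\{Z \subseteq Y\}$ need not itself be a transverse \emph{covering} of $Y$, so $Y$ need not split as a graph of actions along it directly; the resolution is to pass to a geometric resolution of $Y$ (using that point stabilizers can be kept elliptic and that arc stabilizers in $T$, hence in $Y$, are trivial), invoke Proposition~\ref{P.NonGeomFamily} to land in the geometric case, and then use Corollary~\ref{C.GeomGraph} together with Lemma~\ref{L.GeomResidual} to see that the induced splitting is free — precisely the argument pattern used in the proofs of Lemma~\ref{L.AtomsFreeSplit} and Proposition~\ref{P.NotMixing}. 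Once the splitting of $Y$ is free, its vertex stabilizers are proper free factors of $F$, one of them acts with dense orbits, and Corollary~\ref{C.CarryToReduce} or Lemma~\ref{L.FactorFamilies} finishes the contradiction with minimality. The two excluded degenerate cases (containment in a translate) are routine once Corollary~\ref{C.FactorStab} is in hand.
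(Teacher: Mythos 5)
Your overall strategy is right — suppose $Y \cap Y'$ is non-degenerate, rule out containment of one in a translate of the other, and then derive a contradiction with minimality of $F$ — and your initial steps (invoking Lemma~\ref{L.CommonRefinement}(ii) and Corollary~\ref{C.FactorStab}, and the argument ruling out $Y \subseteq gY'$) are sound. However, your middle step has a genuine gap. From the existence of a proper transverse family inside $Y$ you try to extract a proper free factor of $F$ acting with dense orbits by ``applying Lemma~\ref{L.GoodGraph} to the splitting of $Y$ induced by the transverse family.'' This fails on two counts: Lemma~\ref{L.GoodGraph} applies to trees that do \emph{not} have dense orbits, and $F \curvearrowright Y$ \emph{does} have dense orbits; and the transverse family you built need not be a transverse covering of $Y$ at all, so there is no graph-of-actions structure to appeal to. The fallback you sketch — passing to geometric resolutions via Proposition~\ref{P.NonGeomFamily}, Corollary~\ref{C.GeomGraph}, Lemma~\ref{L.GeomResidual}, as in Proposition~\ref{P.NotMixing} — is an argument about \emph{non-mixing} trees, but what you have is only that $Y$ is not indecomposable, which is weaker; and in any case you never carry it out.

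The paper's proof sidesteps all of this. The two observations you are missing are: (1) Proposition~\ref{P.Ergodic}, applied to the wedge family $\mathscr{X}_Y \wedge \mathscr{X}_{Y'}$, already tells you that $\Stab(Y \cap Y')$ is non-trivial and acts non-discretely on $Y \cap Y'$ — you never invoke this, yet it is the engine of the whole argument; and (2) by the transverse-family property, $\Stab(Y \cap Y') = \Stab(Y) \cap \Stab(Y')$, which by Lemma~\ref{L.FactorProperties}(i) is \emph{already a free factor of $\FN$}. Once you know that, the discussion following Corollary~\ref{C.FactorStab} (together with Lemma~\ref{L.FactorFamilies}) hands you a factor $F'' \leq \Stab(Y) \cap \Stab(Y')$ acting with dense orbits on its minimal subtree, hence dynamically reducing $T$. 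Minimality of both $\Stab(Y)$ and $\Stab(Y')$ in $\mathcal{MD}(T)$ then forces $\Stab(Y) = F'' = \Stab(Y')$, so $Y = Y'$. No geometric resolutions, no appeal to Proposition~\ref{P.NotMixing}, and no splitting of $Y$ is needed.
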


\begin{proof}
 Let $Y,Y' \in \mathscr{T}_{\mathcal{MD}}$, and suppose that $I \subseteq Y \cap Y'$ is an arc.  By Corollary \ref{C.FactorStab}, we have that $Stab(Y), Stab(Y') \in \mathcal{MD}(T)$.  Proposition \ref{P.Ergodic} gives that $Stab(Y \cap Y')$ contains a hyperbolic element and is not simplicial.  Minimality of $Y$ and $Y'$ give that $Stab(Y \cap Y') = Stab(Y) \cap Stab(Y')$, hence by Lemma \ref{L.FactorProperties} we have a factor $F \leq Stab(Y),Stab(Y')$ acting with dense orbits on its minimal tree, so by definition of $\mathscr{T}_{\mathcal{MD}}$, we must have $Stab(Y)=F=Stab(Y')$, hence $\mathscr{T}_{\mathcal{MD}}$ is a transverse family.
\end{proof}

We now prove Theorem \ref{T.Control}.

\begin{proof}
 Let $\mathscr{T}_{\mathcal{MD}}$ be the transverse family defined above.  Suppose that there is a non-degenerate arc $I \subseteq T$ that does not intersect any member of $\mathscr{T}_{\mathcal{MD}}$ non-degenerately.  This gives an ergodic measure $\mu \in \mathcal{M}(T)$ absolutely continuous with respect to $\mu_{amb}$ and with non-degenerate support such that $\mu(I') >0$ for some sub-arc $I' \subseteq I$, hence we have a projection $T \to T_{\mu}$.  Evidently, for any $Y \in \mathcal{MD}(T)$, we have that $Y$ fixes a point in $T_{\mu}$; whence, $T_{\mu}$ controls $T$.

We are left to handle the case that every arc of $T$ meets some element of $\mathscr{T}_{\mathcal{MD}}$ non-degenerately.  If there is $F \in \mathcal{MD}(T)$ such that $T_F$ is not arational, then we can also proceed as above: find $\mu \in \mathcal{M}(T)$ absolutely continuous with respect to $\mu_{amb}$ such that $\mu(I)>0$ for some $I \subseteq T_F$, then $T_{\mu}$ controls $T$.  So, we assume that for every $F \in \mathcal{MD}(T)$, $T_F$ is arational; by Theorem \ref{T.Arational}, all point stabilizers in $T_F$ are cyclic.  On the other hand, if any element of $Y \in \mathscr{T}_{\mathcal{MD}}$ is not free, then we find a measure $\mu$ as above with non-degenerate support contained in $Y$, and by Lemma \ref{L.NonDegenerateSupports} we have that $T_{\mu}$ controls $T$.  

Hence we assume that every element of $\mathscr{T}_{\mathcal{MD}}$ is free and arational (hence free and indecomposable).  If $\mathscr{T}_{\mathcal{MD}}$ is a transverse covering, then we are done, since trivial point stabilizers ensure that the skeleton $S$ is a free splitting of $\FN$; in this case $S$, with any choice of metric, controls $T$.  So, we assume that $\mathscr{T}_{\mathcal{MD}}$ is not a transverse covering.  

If $\mathscr{T}_{\mathcal{MD}}$ contains more than one orbit, collapse all but one orbits to get a tree $T'$ and a transverse family $\mathscr{T}_{\mathcal{MD}}'$ in $T'$ containing exactly one orbit of trees; this is possible by the aforementioned procedure, and since members of $\mathscr{T}_{\mathcal{MD}}$ are indecomposable, the elements of the non-collapsed orbit inject into $T'$.  Hence, we have that the members of $\mathscr{T}_{\mathcal{MD}}'$ are free and indecomposable.  

Let $K \subseteq T'$ be a finite supporting subtree.  If the residual $K \smf \mathscr{T}_{\mathcal{MD}}'$ is infinite, then $\mathscr{T}_{\mathcal{MD}}'$ meets $K$ in infinitely many disjoint, non-degenerate arcs.  In this case, we can argue as in the proof of Proposition \ref{P.NonGeomFamily} to get a (simplicial) tree $T''$ such that $Stab(Y)$, $Y \in \mathscr{T}_{\mathcal{MD}}'$ fixes a point in $T$ and such that every subgroup fixing a point in $T'$ also fixes a point in $T''$.  Hence, $T''$  controls $T$.  

So, suppose that the residual $K \smf \mathscr{T}_{\mathcal{MD}}'$ is finite; this implies in particular that $X=K \ssm \cup \mathscr{T}_{\mathcal{MD}}'$ is finite.  If $X \neq \emptyset$, then $K$ contains a finite $\Gamma(K)$-invariant set, hence $T'$ contains a sparse point; in this case Lemma \ref{L.AtomsFreeSplit} gives that $T'$ has a free splitting, and the skeleton of this  transverse covering controls $T$.  We are left to consider the case where $X = \emptyset$.  

If $\mathscr{T}_{\mathcal{MD}}'$ is a transverse covering of $T'$, then we are done by above, so we suppose not; note that if $\overline{\mathscr{T}}_{\mathcal{MD}}'$ is a transverse covering, then we also, done, hence we assume this is not the case.  Hence, we find an arc $I \subseteq K$ such that $I$ is not covered by finitely many elements of $\mathscr{T}_{\mathcal{MD}}'$.  On the other hand, $I$ is covered by $\mathscr{T}_{\mathcal{MD}}'$, so arguing again as in the proof of Proposition \ref{P.NonGeomFamily}, we find a (simplicial) $T''$ that controls $T$.
\end{proof}

\section{Nielsen-Thurston Classification for Elements of $Out(\FN)$}

As an application of our techniques, we deduce a variant of the Bestvina-Handel structure theorem for elements of $Out(\FN)$ obtained in \cite{BH92}; see also \cite{BFH97}.  All results in this section are known.  Our approach was started by Sela in \cite{Sela96} but was not completed, as Sela did not develop the requisite structure theory for non-geometric trees; in particular, Theorem 1.3 of \cite{Sela96} is false, as that statement critically depends on the fact that the group $G$ is freely indecomposable, which is ``maximally false'' for a free group (see examples in the Appendix).  

We follow the general idea of Sela's approach, to use dynamical invariants associated to a ``limit tree'' produced from an element $\Phi \in Out(\FN)$ to obtain invariants for $\Phi$.  The starting point is to build this (very small) limit tree from by $\Phi \in Out(\FN)$, where $\Phi$ is assumed here to have infinite order; finite order elements stabilize a tree in Outer space \cite{Cu84}.  We will only sketch the setup, since all this is well-known by now; see \cite{Bes88} and \cite{Sela96} for more details.  A main point to note is that nowhere do we need the train track machinery introduced in \cite{BH92}.

Let $T_0$ be a free, simplicial $\FN$-tree; for example, take $T_0$ to be a Cayley tree for $\FN$ whose edges have been identified with $[0,1]$.  Let $\phi \in Aut(\FN)$ be any lift of $\Phi$, and we consider the tree $T\phi$ got by twisting the action of $\FN$ on $T_0$ by $\phi$, \emph{i.e.} $g \in \FN$ acts on $T_0\phi$ as $\phi(g)$ acts of $T_0$.  Note that if $\iota \in Aut(\FN)$ is inner, then $T_0\iota$ is equivariantly isometric to $T_0$, so our set-up is independent of the choice of $\phi \in \Phi$.  Set $T_n:=T_0\phi^n$, then the image of the sequence $\{T_n\}$ in the compactified Outer space is subsequentially convergent to a (homothety class of) a very small tree $T$.  Further, $T$ comes with a bi-Lipschitz map $f:T \to T$ that satisfies, for $g \in \FN$ and $x \in T$, $f(gx)=\phi(g)f(x)$; see \cite{Sela96}.  Say that $f$ \emph{represents} $\phi$.  The following is immediate.

\begin{lemma}
 Let $T \in \partial \cvn$, and let $\Phi \in Out(\FN)$.  Suppose that there is a bi-Lipschitz map $f:T \to T$ that represents $\phi \in \Phi$.  If $\mathscr{Y}$ is a transverse family, then so is $f(\mathscr{Y})=\{f(Y)|Y \in \mathscr{Y}\}$.
\end{lemma}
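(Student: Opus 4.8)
The plan is to unwind the definitions and check that the transverse family axioms are preserved under the bi-Lipschitz map $f$. Recall that $\mathscr{Y} = \{Y_v\}_{v \in V}$ is a transverse family means: (1) each $Y_v$ is a non-degenerate subtree of $T$, (2) the collection is $\FN$-invariant, and (3) whenever $Y_v \neq Y_{v'}$, the intersection $Y_v \cap Y_{v'}$ contains at most one point. I would verify each of these three properties for $f(\mathscr{Y}) = \{f(Y_v)\}_{v \in V}$, using only the two structural facts about $f$: it is a bi-Lipschitz homeomorphism of $T$, and it satisfies the twisted equivariance $f(gx) = \phi(g)f(x)$.

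First I would observe that $f$, being a bi-Lipschitz bijection $T \to T$, sends subtrees to subtrees (it is a homeomorphism onto its image, and actually onto $T$, so it carries arcs to arcs and preserves the tree structure) and sends non-degenerate sets to non-degenerate sets (bi-Lipschitz with both constants positive means it does not collapse a non-degenerate arc to a point, nor blow up a point). So each $f(Y_v)$ is again a non-degenerate subtree. Second, for invariance: given $g \in \FN$, we have $g \cdot f(Y_v) = \phi^{-1}(g) \cdot {}$— more precisely, writing $h = \phi^{-1}(g)$, $g \cdot f(Y_v) = f(h \cdot Y_v)$ by the twisting identity, and since $\mathscr{Y}$ is $\FN$-invariant, $h \cdot Y_v = Y_{v'}$ for some $v'$, so $g \cdot f(Y_v) = f(Y_{v'}) \in f(\mathscr{Y})$. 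Thus $f(\mathscr{Y})$ is $\FN$-invariant. Third, the transversality condition: if $f(Y_v) \neq f(Y_{v'})$, then since $f$ is injective, $Y_v \neq Y_{v'}$, so $Y_v \cap Y_{v'}$ contains at most one point; applying the bijection $f$, $f(Y_v) \cap f(Y_{v'}) = f(Y_v \cap Y_{v'})$ contains at most one point. This completes the verification.

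There is really no main obstacle here — this is, as the paper says, immediate. The only mild subtlety worth spelling out is that one must check $f$ does not degenerate a subtree: a priori a continuous equivariant map could collapse arcs, but the bi-Lipschitz hypothesis (specifically the lower Lipschitz bound $d(f(x),f(y)) \geq C^{-1} d(x,y)$ for some $C \geq 1$) rules this out, so $f(Y_v)$ stays non-degenerate. The other point to be slightly careful about is the exact form of the twisting relation and which of $\phi$, $\phi^{-1}$ appears when pushing a group element past $f$; but either way the conclusion (the image collection is permuted by $\FN$) is the same, since $\phi$ is an automorphism and hence permutes the conjugacy-invariant data. So the proof amounts to the three short checks above.

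\begin{proof}
 Since $f$ is bi-Lipschitz, it is a homeomorphism of $T$ that sends subtrees to subtrees and, because its lower Lipschitz constant is positive, does not collapse any non-degenerate arc; hence each $f(Y)$, $Y \in \mathscr{Y}$, is a non-degenerate subtree of $T$. For $\FN$-invariance, fix $g \in \FN$ and $Y \in \mathscr{Y}$; writing $h = \phi^{-1}(g)$, the identity $f(hx) = \phi(h)f(x) = gf(x)$ for all $x$ gives $g\cdot f(Y) = f(h\cdot Y)$, and $h\cdot Y \in \mathscr{Y}$ by invariance of $\mathscr{Y}$, so $g \cdot f(Y) \in f(\mathscr{Y})$. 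Finally, if $f(Y) \neq f(Y')$ then $Y \neq Y'$ by injectivity of $f$, so $Y \cap Y'$ has at most one point, and therefore $f(Y) \cap f(Y') = f(Y \cap Y')$ has at most one point. Thus $f(\mathscr{Y})$ is a transverse family.
\end{proof}
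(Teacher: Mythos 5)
Your proof is correct and is exactly the routine verification the paper has in mind when it states the lemma as ``immediate'' without proof: injectivity of the bi-Lipschitz map plus uniqueness of arcs in an $\mathbb{R}$-tree gives non-degenerate subtrees and the at-most-one-point intersection condition, and the twisted equivariance $f(gx)=\phi(g)f(x)$ gives $\FN$-invariance of the image family. Nothing further is needed.
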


As $\phi$ certainly preserves the set of free factors of $\FN$, to understand conjugacy classes of factors of $\FN$ preserved by $\Phi$, we need to find a subset of the collection of all factors reducing a tree $T$ that are guaranteed to be preserved by an $f$ as above.  We first handle trees that are dynamically reduced by some factor.

\begin{prop}\label{P.CanonDyn}
 Let $T \in \partial \cvn$; one of the following holds:
 \begin{enumerate}
  \item [(i)] For any proper factor $F \leq \FN$, $T_F$ is simplicial, or
  \item [(ii)] There are proper factors $F^1,\ldots,F^r$, each dynamically reducing $T$, such that:
  \begin{enumerate}
   \item [(a)] The union of the transverse families generated by the $F^i$'s is a transverse family $\mathscr{Y}$ in $T$,
   \item [(b)] If $f:T \to T$ is a bi-Lipschitz map representing $\alpha \in Aut(\FN)$, then for some $k$, $\alpha^k$ fixes each $F^i$, up to conjugacy.
  \end{enumerate}
 \end{enumerate}
\end{prop}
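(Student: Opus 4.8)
The plan is to produce the factors $F^1,\dots,F^r$ as the stabilizers of a canonical transverse family built from the dynamics of $T$, and then argue that any bi-Lipschitz self-map representing an automorphism permutes them. First I would dispose of the trivial alternative: if $T$ does not have dense orbits, then Lemma~\ref{L.GoodGraph} exhibits $T$ as a graph of actions with simplicial edges and dense-orbit vertex pieces, and one checks that restricting to a proper factor can only land inside a vertex action or a simplicial piece; after a short argument this forces (i), that $T_F$ is simplicial for every proper $F$. (Here one should also handle the genuinely simplicial case, where (i) is immediate.) So assume $T$ has dense orbits and that some proper factor dynamically reduces $T$, i.e.\ $\mathcal{D}(T)\neq\emptyset$; then $\mathcal{MD}(T)$, the set of minimal such factors, is nonempty by the descending chain condition for free factors (Lemma~\ref{L.FactorProperties}).

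Next I would invoke Proposition~\ref{P.MinTF}: $\mathscr{T}_{\mathcal{MD}}$, the invariant family of minimal subtrees of the factors in $\mathcal{MD}(T)$, is a transverse family in $T$. By Corollary~\ref{C.Finiteness1} this family meets only finitely many $\FN$-orbits, say $r$ of them, so choosing one representative $Y^1,\dots,Y^r$ from each orbit and setting $F^i=\Stab(Y^i)$ gives, via Corollary~\ref{C.FactorStab}, that each $F^i\in\mathcal{MD}(T)$ and that the union of the transverse families they generate is exactly $\mathscr{T}_{\mathcal{MD}}$; this is a transverse family, which is part (a). For part (b), let $f:T\to T$ be bi-Lipschitz representing $\alpha$. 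The key observation is that bi-Lipschitz self-maps preserve all the relevant structure: $f$ sends transverse families to transverse families (the Lemma just above this proposition), sends subtrees with dense orbits (for the twisted action) to subtrees with dense orbits, and conjugates the $\FN$-action by $\alpha$, so $f(Y)$ is the minimal subtree for $\alpha(\Stab(Y))$. Hence $F\mapsto\alpha(F)$ sends $\mathcal{MD}(T)$ to $\mathcal{MD}(T)$ — minimality is preserved because $f$ is a bijection of $T$ intertwining the two actions, so it induces an order-isomorphism on the poset of subtrees carrying dynamically reducing factors. Therefore $\alpha$ permutes the finite set of conjugacy classes $\{[F^1],\dots,[F^r]\}$, and some power $\alpha^k$ fixes each $[F^i]$.

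The step I expect to be the main obstacle is showing that $f$ genuinely sends $\mathcal{MD}(T)$ into itself, and in particular that it sends a \emph{minimal} invariant subtree of $\Stab(Y)$ to the \emph{minimal} invariant subtree of $\alpha(\Stab(Y))$ — one must check $f$ does not ``inflate'' a minimal subtree, which is where bi-Lipschitz (as opposed to merely Lipschitz, or a general morphism) is essential, since a bi-Lipschitz map is a homeomorphism and hence carries $Y_H$ onto exactly $(f(T))_{\alpha(H)} = T_{\alpha(H)}$, using that $f(T)=T$. A secondary subtlety is making the reduction to the dense-orbits case precise enough to be sure that in the non-dense-orbits situation one really lands in alternative (i) rather than in a degenerate form of (ii); this requires carefully tracking that a proper factor acting on a simplicial piece or an edge of the Levitt decomposition cannot acquire dense orbits on its own minimal subtree, which follows from the fact that very small simplicial trees have no subgroups acting with dense orbits. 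I would also remark that the $F^i$ so produced are automatically the canonical collection appearing in Theorem~\ref{T.Control}'s proof, so no extra work is needed to match them up with $\mathcal{C}(T)$.
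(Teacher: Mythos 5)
Your main case is the paper's argument: take $\mathcal{MD}(T)$, invoke Proposition \ref{P.MinTF} to get the transverse family $\mathscr{T}_{\mathcal{MD}}$, use Corollary \ref{C.Finiteness1} for finiteness of the number of orbits and Corollary \ref{C.FactorStab} to identify the stabilizers of orbit representatives as the $F^i$, and then observe that a bi-Lipschitz $f$ representing $\alpha$ carries $\mathscr{T}_{\mathcal{MD}}$ to itself, so that some power of $\alpha$ fixes each class $[F^i]$. The paper phrases the invariance step slightly differently (if a member of $f(\mathscr{T}_{\mathcal{MD}})$ were dynamically reduced by a factor $F$, then $\alpha^{-1}(F)$ would dynamically reduce a member of $\mathscr{T}_{\mathcal{MD}}$, contradicting minimality, whence $f(\mathscr{T}_{\mathcal{MD}})=\mathscr{T}_{\mathcal{MD}}$), but your remark that $f$ is a homeomorphism intertwining the two actions and hence carries $T_H$ onto $T_{\alpha(H)}$ amounts to the same point, and is fine.

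The genuine problem is your opening reduction: the claim that if $T$ does not have dense orbits then alternative (i) holds is false. Lemma \ref{L.GoodGraph} decomposes such a $T$ into simplicial edges and vertex trees on which the vertex stabilizer acts with dense orbits, and a proper free factor can perfectly well act with dense orbits on its minimal subtree sitting inside one of those dense-orbit vertex trees; your "short argument" only rules out dense orbits coming from the simplicial pieces. Concretely, take the graph of actions whose skeleton is the Bass--Serre tree of $F_4=F(a,b)\ast F(c,d)$ with an edge of positive length, with vertex trees an arational $F(a,b)$-tree and a point: here $T_{F(a,b)}$ is non-simplicial, so (i) fails, yet $T$ does not have dense orbits, and alternative (ii) still has to be established for this tree. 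As written, your proof never treats such trees, since everything downstream (Proposition \ref{P.Ergodic} behind Proposition \ref{P.MinTF}, Corollary \ref{C.Finiteness1}) is invoked under the dense-orbits hypothesis. The paper's dichotomy is not "dense orbits or not" but exactly "(i) or not": when (i) fails one gets dynamically reducing factors (via Corollary \ref{C.CarryToReduce} and the discussion following Corollary \ref{C.FactorStab}, applied inside the dense-orbit vertex trees of the decomposition of Lemma \ref{L.GoodGraph}, which is where all the subtrees $T_{F^i}$ live), and the argument with $\mathscr{T}_{\mathcal{MD}}$ is then run directly. So either delete the false reduction and argue as the paper does, or supply a separate proof of (ii) for trees without dense orbits for which (i) fails.
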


The subgroups $F^1,\ldots,F^r$ are called the \emph{characteristic dynamical factors} for $T$.

\begin{proof}
 If (i) does not hold, let $\mathscr{T}_{\mathcal{MD}}$ be the transverse family of minimal dynamical factors given by Proposition \ref{P.MinTF}; note that $\mathscr{T}_{\mathcal{MD}}$ contains finitely many orbits by Corollary \ref{C.Finiteness1}.  By the lemma, $f(\mathscr{T}_{\mathcal{MD}})$ is a transverse family, and since $f$ represents $\phi$, the stabilizers of element of $f(\mathscr{T}_{\mathcal{MD}})$ are factors; if some member of $f(\mathscr{T}_{\mathcal{MD}})$ is dynamically reduced by a factor $F$, then we immediately get that some element of $\mathscr{T}_{\mathcal{MD}}$ is dynamically reduced by $\alpha^{-1}(F)$, which is impossible.  Hence, $f(\mathscr{T}_{\mathcal{MD}})=\mathscr{T}_{\mathcal{MD}}$, and we take $\{F^1,\ldots,F^r\}$ to contain the stabilizer of one tree in each orbit of $\mathscr{T}_{\mathcal{MD}}$.  
\end{proof}

We also need to find canonical peripheral factors, in case $T$ is not dynamically reduced by any factor.

\begin{prop}\label{P.CanonPer}
 Let $T \in \partial \cvn$ and assume that $T$ is not dynamically reduced by any factor; one of the following holds:
 \begin{enumerate}
  \item [(i)] No non-trivial free factor fixes a point in $T$ ($T$ is arational), or
  \item [(ii)] There are factors $F^1,\ldots,F^r$, each peripherally reducing $T$, such that if $f:T \to T$ is a bi-Lipschitz map representing $\alpha \in Aut(\FN)$, then for some $k$, $\alpha$ fixes each $F^i$, up to conjugacy.
 \end{enumerate}

\end{prop}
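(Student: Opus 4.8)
The plan is to run the same kind of argument used for Proposition \ref{P.CanonDyn}, but now with peripheral data in place of dynamical data; the main point is to extract a finite, canonical collection of point stabilizers that every point-stabilizing free factor must be contained in, and then show it is invariant under any representative $f$. First I would invoke Lemma \ref{L.CanonicalPeripheral}: since $T$ has dense orbits (if it does not, one first passes through Lemma \ref{L.GoodGraph}, but in that case Corollary \ref{C.Simp} and Corollary \ref{C.CarryToReduce} put us in case (ii) with peripheral factors coming from edge stabilizers of the skeleton), there are only finitely many orbits of points of $T$ with non-trivial stabilizer, and each such stabilizer has rank at most $N$. Let $x_1,\dots,x_r$ be orbit representatives of the points whose stabilizer is a \emph{proper} free factor — by Corollary \ref{C.FactorStab} and Corollary \ref{C.CarryToReduce}, any free factor peripherally reducing $T$ is contained in (a conjugate of) one of the $\Stab(x_j)$, and conversely each such $\Stab(x_j)$, when it is a non-trivial proper free factor, peripherally reduces $T$. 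If no $\Stab(x_j)$ is a non-trivial proper free factor then no non-trivial free factor fixes a point in $T$, which is alternative (i); otherwise, set $F^j := \Stab(x_j)$ for those indices where this is a non-trivial proper factor, giving the list in (ii).

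For the invariance statement, suppose $f:T\to T$ represents $\alpha\in Aut(\FN)$, so $f(gx)=\alpha(g)f(x)$ for all $g,x$. Since $f$ is a bi-Lipschitz equivalence it is a bijection, so it carries the (finite) set of orbits of non-trivially-stabilized points bijectively to itself, and for a point $x$ with stabilizer $H$ the point $f(x)$ has stabilizer $\alpha(H)$. Thus $\alpha$ permutes the finite set of conjugacy classes $\{[\Stab(x_1)],\dots,[\Stab(x_r)]\}$, hence permutes the sublist consisting of those classes that are non-trivial proper free factors (being a free factor and being proper and non-trivial are preserved by $\alpha$, since $\alpha$ is an automorphism). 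A permutation of a finite set has finite order, so for a suitable $k$ the automorphism $\alpha^k$ fixes each $[F^j]$; replacing $\alpha$ by $\alpha^k$ as in the statement (or simply phrasing the conclusion as ``for some $k$'') gives (ii)(b). Note that because distinct $x_j$ lie in distinct $\FN$-orbits, the conjugacy classes $[F^j]$ are genuinely distinct, so there is no collapsing of the list under $f$.

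The one place that requires a little care — and what I expect to be the main obstacle — is justifying the reduction to the dense-orbit case and making sure the peripheral factors produced in the non-dense-orbit case are still captured by a \emph{finite} canonical list invariant under $f$. When $T$ does not have dense orbits, Lemma \ref{L.GoodGraph} exhibits $T$ as a graph of very small actions, and by Corollary \ref{C.Simp} the edge stabilizers of the skeleton $S$ lie in proper free factors; one then wants the collection of these (finitely many) conjugacy classes of edge-group-containing minimal free factors, together with the peripheral factors of the vertex trees with dense orbits, to form the list $F^1,\dots,F^r$. The subtlety is that a bi-Lipschitz $f$ representing $\alpha$ need not preserve the Levitt decomposition on the nose; however, that decomposition is \emph{canonical} by Lemma \ref{L.GoodGraph}, so $f$ does send it to itself, and hence permutes the associated finite family of peripheral factors — after which the same finite-order-permutation argument applies. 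I would write this dichotomy explicitly at the start of the proof so that the dense-orbit argument above is the only substantive content.
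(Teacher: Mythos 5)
There is a genuine gap, and it is exactly the case the paper's proof spends most of its effort on. Your dichotomy rests on the claim that any free factor fixing a point of $T$ is contained in a conjugate of a point stabilizer that is \emph{itself} a proper free factor, so that if no $\Stab(x_j)$ is a proper factor then alternative (i) holds. That implication is unjustified: a free factor $F$ peripherally reducing $T$ fixes some point $y$ and hence lies in $\Stab(y)$, but $\Stab(y)$ need not be a free factor (it may even fill $\FN$ while properly containing a free factor, e.g.\ a stabilizer generated by a basis element together with a filling element). In that situation your list is empty, (i) fails, and your argument produces no invariant peripheral factors at all. Your appeal to Corollary \ref{C.FactorStab} and Corollary \ref{C.CarryToReduce} does not repair this: Corollary \ref{C.CarryToReduce} only says \emph{some} factor $F'\leq F$ reduces $T$; it does not place $F$ inside a point stabilizer which is a factor, and it gives no canonicity or $\alpha$-invariance for the factor it produces.

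The paper isolates precisely this hard case: it considers $x$ with $\Stab(x)$ containing a factor but not equal to one, uses the hypothesis that $T$ is not dynamically reduced (via the $Fill(\cdot)$ operator of Lemma \ref{L.FactorProperties}) to see that such a stabilizer must fill $\FN$, and then runs a geometric-tree argument: a resolution $T'$ in which all point stabilizers are elliptic can have no simplicial part (else $T$ would be dynamically reducible or all point stabilizers would be factors), so $T$ is geometric with dense orbits; thin components are excluded because their point stabilizers cannot fill, so all minimal components are of surface type; the resulting very small splitting is canonical, hence its skeleton is preserved by any bi-Lipschitz $f$ representing $\alpha$, and Corollary \ref{C.Simp} then yields the invariant peripheral factor(s). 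None of this machinery appears in your proposal, and without it case (ii) is not established when the point stabilizers containing factors are not themselves factors. Your treatment of the easy case (stabilizers that are factors: finiteness from Lemma \ref{L.CanonicalPeripheral} plus the finite-order permutation argument) does agree with the paper's first step, and your remark on the non-dense-orbit reduction is reasonable, but the core of the proposition is the missing case.
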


For the proof, let $Fill(\cdot)$ denote the smallest free factor of $\FN$ containing $\cdot$; $Fill(\cdot)$ is well-defined by Lemma \ref{L.FactorProperties}.  The factors in the conclusion are called the \emph{characteristic peripheral factors} of $T$.

\begin{proof}
 The collection of point stabilizers in $T$ that are free factors is obviously $\alpha$-invariant, and Lemma \ref{L.CanonicalPeripheral} gives that this set has a finite number of conjugacy representatives.  Hence we consider $x \in T$ such that $Stab(x)$ contains a factor but is not a factor.  Since $T$ is not dynamically reduced by a factor, we get for any $g \in Stab(x)$ that either $Fill(g) \leq Stab(x)$ or $Fill(g)=\FN$, and the latter situation must occur, since $Stab(x)$ is assumed not to be a free factor.  

 To finish, we use resolutions of $T$ be geometric trees; since these arguments are very similar to ones given above, we will not provide all the details.  Take a geometric tree $T'$ such that every point stabilizer fixes a point in $T'$.  Note that if $T'$ contains a simplicial edge, then $T'$ must be simplicial; else there $T'$, hence $T$, will be dynamically reducible.  On the other hand, if $T'$ is simplicial, then every point stabilizer is a factor, which we assumed not to be the case.  Hence, $T'$ contains no simplicial edge, hence has dense orbits, so $T$ is geometric.

 Now, further note that $T$ could not contain a thin component, since in this case, point stabilizers cannot fill the whole group.  It follows that every minimal component of $T$ is a surface; hence $T$ has a very small splitting, whose skeleton is evidently preserved by any bi-Lipschitz map representing some $\alpha \in Aut(\FN)$; apply Corollary \ref{C.Simp} to find an invariant factor.
\end{proof}

\begin{thm}\label{T.Classify}
 Let $\Phi \in Out(\FN)$.  One of the following holds:
  \begin{enumerate}
   \item [(i)] $\Phi$ is finite order,
   \item [(ii)] there is $k$, such that $\Phi^k$ fixes a conjugacy class of non-trivial proper free factors of $\FN$, or
   \item [(iii)] $\Phi$ preserves a pair $(L_{\Phi}^-,L_{\Phi}^+)$ of minimal and filling laminations.
  \end{enumerate}

\end{thm}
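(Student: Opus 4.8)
The strategy is to follow Sela's limit-tree approach as set up in the preamble: assume $\Phi$ has infinite order (otherwise we are in case (i) by \cite{Cu84}), and build the very small limit tree $T \in \partial\cvn$ together with the bi-Lipschitz map $f\colon T \to T$ representing some lift $\phi \in \Phi$. The whole argument then becomes a case analysis on the dynamical structure of $T$, dispatched by the two preceding propositions. First I would apply Proposition \ref{P.CanonDyn}: if alternative (ii) holds, the characteristic dynamical factors $F^1,\ldots,F^r$ give, by condition (b), a $k$ with $\Phi^k$ fixing the conjugacy class $[F^1]$ — this lands us in case (ii) of the theorem. So we may assume alternative (i) of Proposition \ref{P.CanonDyn}: $T_F$ is simplicial for every proper factor $F$, i.e. $T$ is not dynamically reduced by any factor.

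Next I would invoke Proposition \ref{P.CanonPer} with this $T$. If alternative (ii) there holds, the characteristic peripheral factors give (after passing to a power) a $\Phi^k$-fixed conjugacy class of a proper free factor, again landing us in case (ii). So we are reduced to alternative (i) of Proposition \ref{P.CanonPer}: no non-trivial free factor fixes a point in $T$, and combined with the previous reduction, no non-trivial proper free factor reduces $T$ at all — that is, $T$ is arational. By Theorem \ref{T.Arational}, $T$ is indecomposable, and either $T$ is free and indecomposable or $T$ is dual to an arational measured foliation on a once-punctured surface. In either case $T$ carries a minimal and filling lamination: when $T$ is free and indecomposable this is $L(T)$ itself (minimal up to finitely many diagonal leaves, and filling by \cite{R10a}, as recalled in the introduction), and in the surface case the distinguished minimal filling sublamination of $L(T)$ corresponds to the geodesic lamination of the foliation. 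Since $f$ represents $\phi$, it preserves $L_\epsilon(T)$ for every $\epsilon$ (translation lengths are preserved up to the homothety constant), hence preserves $L(T)$ and its canonical minimal filling sublamination; running the same construction with $\phi\inv$ (equivalently, considering the limit of $T_0\phi^{-n}$) yields the companion lamination, giving the pair $(L_\Phi^-, L_\Phi^+)$ of case (iii).

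A few routine points need to be filled in. One must check that $\Phi$, not merely a lift $\phi$, preserves the laminations — but $L(T)$ and its sublaminations are $\FN$-invariant by definition, so preservation of the conjugacy-level data is automatic once a representing $f$ is produced. One must also confirm that the limit tree $T$ is genuinely in $\partial\cvn$, i.e. not free and simplicial: if $T$ were simplicial, then $T_F$ simplicial for all factors together with the structure of simplicial very small trees would force a $\Phi$-periodic free factor (this is exactly the content of the ``$T$ simplicial $\Rightarrow$ not arational'' remark before Theorem \ref{T.Arational}, combined with Proposition \ref{P.CanonPer}), so that subcase also feeds into case (ii) rather than requiring separate treatment.

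The main obstacle is the bookkeeping in the surface case of the arational dichotomy: one must be sure that the minimal filling sublamination of $L(T)$ extracted in Theorem \ref{T.Arational} is canonically associated to $T$ (so that $f$-invariance of $L(T)$ forces $f$-invariance of the sublamination) and that passing to $\phi\inv$ produces a genuinely distinct ``stable/unstable'' pair rather than the same lamination twice — the latter being where one implicitly uses that $\Phi$ has infinite order and is not (virtually) a surface homeomorphism fixing a curve, which is precisely the escape into case (ii) already handled. I expect essentially no new ideas beyond assembling Propositions \ref{P.CanonDyn} and \ref{P.CanonPer} and Theorem \ref{T.Arational}; the content of the theorem is really a repackaging of those three results plus the limit-tree construction.
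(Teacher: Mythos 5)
Your argument is correct and follows essentially the same route as the paper: build the limit tree for $\phi$, use Propositions \ref{P.CanonDyn} and \ref{P.CanonPer} to either produce a periodic free factor (case (ii)) or conclude arationality, then take $L(T^+)$ and $L(T^-)$ for the limit trees of $\Phi$ and $\Phi^{-1}$ as the pair in case (iii). The only point the paper adds that you omit is the remark (via compactness of currents and the Kapovich--Lustig pairing) that a $\Phi$-invariant factor carries a leaf of \emph{any} limit tree, so that arationality does not depend on the choice of limit tree; this is not needed for the trichotomy as stated.
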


\begin{proof}
 Assume that $\Phi$ is not finite order, and note that if $\Phi$ preserves a factor $F$, then $F$ carries a leaf of any limit tree; this uses compactness of spaces of currents and continuity of the Kapovich-Lustig intersection pairing.  Applying Propositions \ref{P.CanonDyn} and \ref{P.CanonPer}, we get the converse: if a limit tree of $\Phi$ is not arational, then $\Phi$ preserves a factor.  Hence, some limit tree of $\Phi$ is arational if and only if every limit tree for $\Phi$ is arational.  The laminations in (iii) are $L(T^+)$ and $L(T^-)$, where $T^+$ is a limit tree for $\Phi$ and $T^-$ is a limit tree for $\Phi^{-1}$.

\end{proof}

\section{Appendix: Examples of Trees}

We collect several examples of trees in $\partial \cvn$ as well as a few basic (known) results to be used in the main body of the article.  Aside from developing intuition, we feel that including these examples will give the reader insight into what is really being addressed in the (technical) arguments in this article.

\subsection{Bass-Serre Trees}  Let $\mathcal{G}$ be any graph of groups decomposition of $\FN$, and let $S$ be the corresponding Bass-Serre tree.  Let $\{e_1, e_2,\ldots \}$ be a set containing one edge in each orbit of edges in $S$.  Identifying $e_i$ with a segment in $\mathbb{R}$ and extending this operation equivariantly over $S$ gives a metric simplicial $\mathbb{R}$-tree $T$.  The definition of very small uses no metric data, so it makes sense to say that $S$ is very small.  If $S$ is very small then so is $T$; more generally, if each $e_i$ is identified with a non-degnerate segment of $\mathbb{R}$, then $S$ is very small if and only if $T$ is very small.

\subsection{Foldings}  Let $T$ be a metric simplicial tree with trivial edge stabilizers; in other words, the corresponding Bass-Serre tree encodes a free splitting of $\FN$.  Suppose that $T$ contains a vertex $v$ with non-trivial stabilizer.  Let $e=(v,v')$ be an edge incident on $v$, and let $1 \neq g \in Stab(v)$.  Since $\FN$ acts isometrically on $T$, and since $e$ is identified with a ($Isom(\mathbb{R})$ orbit of a) segment $[a,b] \subseteq \mathbb{R}$, we have that $ge=(v,gv')$ is isometric with $[a,b]$ as well.  Identify $e$ with $ge$ and extend this operation equivariantly over $T$ to get a new tree $T'=T_{e=ge}$; this is called \emph{folding} at $v$.  One has that $T'$ is very small if and only if $\langle g \rangle$ is a maximal cyclic subgroup of $\FN$.  

We now show that all very small simplicial trees arise from a free splitting trees via iteratively applying the above folding procedure.  The structure of cyclic splittings of $\FN$ is completely understood; we use the simple topological characterization due to Bestvina-Feighn: 

\begin{lemma}\cite[Lemma 4.1]{BF94}
 Let $\Gamma$ be a finite graph, let $S$ be a compact surface, and suppose that $f:\partial S \to \Gamma$ is a map that is essential on each component.  If the quotient space $\Gamma \coprod_f S$ has free fundamental group, then there is a homotopy equivalence $\psi:\Gamma \to S^1 \vee \Gamma'$, such that $\psi \circ f$ sends one component of $\partial S$ homeomorphically onto $S^1$ and sends all other components into $\Gamma'$.  
\end{lemma}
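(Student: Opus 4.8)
The plan is to realize $\Gamma \coprod_f S$ as a standard $2$-complex model for a graph-of-groups decomposition of $\FN$ and to extract the required homotopy equivalence from the structure of that decomposition together with the Euler-characteristic bookkeeping forced by $\pi_1$ being free. First I would reduce to the case that $S$ is connected: if $S$ has several components $S_1,\ldots,S_m$, then $\Gamma\coprod_f S$ is obtained from $\Gamma$ by successively attaching the $S_i$, and at each stage the fundamental group must remain free (a subgroup of $\FN$), so it suffices to treat a single attached surface and then iterate; the "one distinguished boundary component" in the conclusion will come from the first surface processed, and all remaining boundary components of all remaining pieces get pushed into the complementary graph $\Gamma'$. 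Next I would record the two model calculations: if $S$ is a disk then $f(\partial S)$ is null-homotopic in $\Gamma$, but $f$ is essential on $\partial S$, so $\partial S$ maps to a loop that is null-homotopic in a graph --- impossible unless $\Gamma$ has a circle onto which $\partial S$ maps homeomorphically, giving the conclusion directly with $S^1$ the image; if $S$ is an annulus then $\pi_1(\Gamma\coprod_f S)$ is an HNN or amalgam over $\Z$, and freeness of the result forces the $\Z$ edge group to be a free factor on one side, which after a homotopy equivalence of $\Gamma$ collapsing an arc exhibits one boundary circle of $S$ as a free splitting edge --- again the conclusion.

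The heart of the argument is the general connected case. Here I would argue by induction on the complexity of $S$ (say on $-\chi(S)$ plus the number of boundary components), using the following dichotomy. Either some boundary component $c$ of $\partial S$ maps by $f$ to a loop in $\Gamma$ that carries a free factor of $\FN$ --- equivalently, the inclusion of the corresponding $\Z$ into $\pi_1(\Gamma\coprod_f S)=\FN$ lands in (a conjugate of) a proper free factor and is actually a free factor of it --- in which case one performs a homotopy equivalence of $\Gamma$ that splits off an $S^1$ along $c$ and pushes everything else into $\Gamma'$, and one is done; or no boundary component has this property, and then I claim the total space cannot have free fundamental group. For the latter, observe that if $S$ is essential (no boundary component bounds a disk or a Möbius band, which very-small/orientability considerations rule out here) and is glued to a graph along $\pi_1$-injective boundary circles none of which is "peripheral" in the free group sense, then $\pi_1(\Gamma\coprod_f S)$ contains the fundamental group of a closed surface or a surface with all boundary components identified nontrivially, which is not free --- this is exactly where one invokes the rank/Euler-characteristic count: a surface group (closed or with its boundary absorbed) has nontrivial $H_2$ or fails the Kurosh-subgroup structure of a free group. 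To make the "splits off along $c$" step precise I would use the standard fact that a homotopy equivalence of a graph $\Gamma$ realizing a Whitehead-type move can be chosen to send a prescribed loop onto a wedge summand $S^1$; this is elementary graph topology once the algebraic statement (the edge circle is a free factor) is known.

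I expect the main obstacle to be the bookkeeping in the dichotomy --- precisely, proving that "no boundary component of $S$ maps to a free-factor loop" forces $\pi_1(\Gamma\coprod_f S)$ to be non-free. The clean way to handle this is to pass to the graph-of-groups decomposition $\mathcal{G}$ of $\FN=\pi_1(\Gamma\coprod_f S)$ dual to the surface: vertex groups are $\pi_1$ of the components of $\Gamma$ cut along $f(\partial S)$ together with $\pi_1(S)$, edge groups are the $\Z$'s of the boundary circles. Since $\FN$ is free, every vertex group is free and, by the Kurosh subgroup theorem applied to the fundamental group of $\mathcal{G}$, the decomposition must be "free-like" --- in particular each edge group, being $\Z$, must be a free factor of at least one adjacent vertex group (otherwise an incident vertex group together with a generator of the edge group generates a non-free subgroup, or a loop in the Bass--Serre tree produces a non-free one-edge HNN over $\Z$ that is not free, namely a Klein-bottle or torus-like group). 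Applied to the vertex group $\pi_1(S)$: a surface group with boundary in which \emph{every} boundary $\Z$ is a free factor is forced (by the computation $\operatorname{rk}\pi_1(S)=1-\chi(S)$ versus the number of boundary components) to be a surface for which this is possible only if... one pares down, and the only way to avoid a contradiction is for (at least) one boundary component to be split off, which is the distinguished $S^1$. I would present this last count as an explicit small-cases check (planar surfaces, and the genus-$\geq 1$ obstruction coming from the closed-surface subgroup one would otherwise build), since that is the concrete content; everything else is routine graph-of-groups manipulation and can be stated with references to \cite{BF94}.
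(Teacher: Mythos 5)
You cannot be checked against an internal argument here: the paper does not prove this statement, it quotes it verbatim as \cite[Lemma 4.1]{BF94} and then uses it in the unfolding argument of the appendix. So your proposal has to stand on its own, and as written it has a genuine gap at exactly the point where the lemma has content. The conclusion is not merely that \emph{some} boundary curve of $S$ becomes primitive in $\pi_1(\Gamma)$; it is that there is a single homotopy equivalence $\psi\colon\Gamma\to S^1\vee\Gamma'$ under which one boundary component goes homeomorphically onto the wedge summand $S^1$ \emph{and simultaneously all other boundary components are carried into} $\Gamma'$, i.e.\ one boundary word is a basis element of a free decomposition $\pi_1(\Gamma)=\Z\ast\pi_1(\Gamma')$ into which all the remaining boundary words can be conjugated on the $\pi_1(\Gamma')$ side. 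Your first horn (``split off an $S^1$ along $c$ and push everything else into $\Gamma'$'') simply asserts this push, and your second horn (``otherwise $\pi_1$ is not free'') is argued only by gesture: the claim that Kurosh forces each $\Z$ edge group of the dual graph of groups to be a free factor of an adjacent vertex group is not a consequence of Kurosh; the one-edge cases are Shenitzer's and Swarup's theorems (nontrivial results in their own right), and for a multi-edge decomposition, collapsing to a one-edge splitting only exhibits the edge group as a free factor of a \emph{collapsed} vertex group, not of the original surface or graph vertex group adjacent to it. The final step, where the Euler-characteristic count is supposed to force a boundary component to split off, literally trails off (``is possible only if\ldots one pares down''), which is precisely the content one is being asked to prove.

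There are also secondary problems. The disk case is wrong as stated: attaching a disk along an essential loop $w$ does not make $w$ null-homotopic in $\Gamma$; what you need there is the (classical, but not elementary) fact that if $\pi_1(\Gamma)/\langle\langle w\rangle\rangle$ is free then $w$ is trivial or primitive, and only then do you get the homotopy equivalence onto $S^1\vee\Gamma'$. Likewise the reduction to connected $S$ via ``each intermediate stage has free fundamental group because it is a subgroup of $\FN$'' requires $\pi_1$-injectivity of the intermediate complex in $\Gamma\coprod_f S$; that holds by the graph-of-spaces argument when no component of $S$ is a disk (edge circles are essential in $\Gamma$ and $\pi_1$-injective in the non-disk surfaces), but it fails in the presence of disk components and in any case needs to be said. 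In short, the architecture (induction plus a Shenitzer--Swarup-type dichotomy) is a reasonable way to approach the lemma, but the step that makes the dichotomy work --- producing the free decomposition of $\pi_1(\Gamma)$ adapted to \emph{all} boundary components at once, or deriving a contradiction with freeness when none exists --- is missing rather than merely compressed, so the proposal does not yet constitute a proof; to cite or reprove the result you would need to supply that argument (e.g.\ along the lines of the homological/unfolding proof in \cite{BF94}).
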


Now let $T$ be a simplicial very small tree, and let $Y$ be the corresponding Bass-Serre tree.  The graph of groups structure on $\FN$ coming from $Y$ corresponds to writing $\FN$ as the fundamental group of $S \coprod_f \Gamma$, where $\Gamma$ is a finite graph, and $S$ is a disjoint union of closed annuli; the components of $\Gamma$ are in one-to-one correspondence with orbits of vertices in $Y$.  

Apply the Lemma to get a homotopy equivalence $\psi:\Gamma \to \Gamma' \vee S^1$; let $e=(v,v')$ be the edge of $S$ corresponding to the annulus $A \subseteq S$ for which $\psi \circ f$ maps one component homeomorphically onto $S^1$.  This means that $\pi_1(A)=\langle a \rangle = Stab(e)=Stab(v) \cap Stab(v')$ is a free factor of one of $Stab(v)$, $Stab(v')$, say $Stab(v')$.  Suppose that $A=S^1 \times [-1,1]$ and that the restriction of $f$ to $S^1 \times \{1\}$ is a homeomorphism.  Apply a homotopy to $S \coprod_f \Gamma$ that collapses $A$ onto $S^1 \times \{-1\} \cup \{pt.\} \times [-1,1]$.  The induced effect on $Y$ is to unfold at the vertex $v$; more precisely, we get a tree $Y'$ and vertices $v, v'$ in $Y$ with $\langle a \rangle \leq Stab(v)$, such that $Y'_{(v,v')=a(v,v')}$ is equivarianlty isometric with $Y$.  Note that there are fewer orbits of edges with non-trivial stabilizer in $Y'$ than in $Y$, so our claim that all simplicial trees in $\partial \cvn$ arise from iteratively folding a metric Bass-Serre tree for a free splitting of $\FN$ follows by induction.

\begin{cor}\label{C.Simp}
 Let $T \in \partial \cvn$ be simplicial.  Every edge stabilizer in $T$ is contained in a proper free factor, and there is at most one conjugacy class of vertex stabilizers in $T$ that is not contained in a proper free factor.  
\end{cor}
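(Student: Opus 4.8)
The plan is to leverage the structural description established just above the statement: every simplicial tree $T \in \partial \cvn$ is obtained from the metric Bass-Serre tree $Y_0$ of a \emph{free} splitting of $\FN$ by iteratively folding at vertices, where each fold identifies an edge $e$ with a translate $ge$ for $1 \neq g$ in the vertex stabilizer, and $\langle g \rangle$ is required to be maximal cyclic to preserve very-smallness. First I would set up an induction on the number of orbits of edges with non-trivial stabilizer. In the base case this number is zero, so $T$ is (the metric realization of) a free splitting: all edge stabilizers are trivial, hence trivially contained in proper free factors, and every vertex stabilizer is a free factor of $\FN$ by the standard theory of free splittings — so there is no conjugacy class of vertex stabilizer failing to lie in a proper free factor, and we are even stronger than the claim except possibly when some vertex stabilizer is all of $\FN$, which cannot happen for a free splitting with more than one vertex; if $\FN$ itself is a vertex group the splitting is trivial and there is nothing to prove.

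For the inductive step I would take $T$ with $k \geq 1$ orbits of edges with non-trivial stabilizer and write $T = T'_{e = ge}$ as a single fold of a very small simplicial tree $T'$ with $k-1$ such orbits, where $e = (v,v')$ is an edge of $T'$ and $\langle g \rangle \leq \Stab_{T'}(v)$ is maximal cyclic. By induction, every edge stabilizer in $T'$ lies in a proper free factor, and there is at most one conjugacy class of vertex stabilizer in $T'$ not contained in a proper free factor. I then need to track how stabilizers change under the fold. The new edge $\bar e$ in $T$ (the image of $e$, now identified with $ge$) has stabilizer $\Stab_T(\bar e) = \Stab_{T'}(v) \cap g\,\Stab_{T'}(v')g^{-1}$ enlarged appropriately; the point of the Bestvina–Feighn Lemma 4.1 argument, read in the folding direction, is that this new edge stabilizer is $\langle g \rangle$ (or a conjugate), which is a free factor of the vertex stabilizer $\Stab_{T'}(v') $ on one side, hence by Lemma \ref{L.FactorProperties}(ii) a free factor of whatever proper free factor contained $\Stab_{T'}(v')$ — and thus a proper free factor of $\FN$ itself. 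All other edge orbits of $T$ come from edge orbits of $T'$ with the \emph{same} stabilizers, so they remain inside proper free factors by induction. Thus (a): every edge stabilizer in $T$ is contained in a proper free factor.

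For the vertex part: the fold merges the orbit of $v$ with (part of) the orbit of $v'$, producing a vertex $w$ of $T$ whose stabilizer is generated by $\Stab_{T'}(v)$ together with a suitable conjugate of $\Stab_{T'}(v')$ amalgamated over $\langle g \rangle$ — and since $\langle g \rangle$ is a free factor of $\Stab_{T'}(v')$, this amalgam is simply a free product $\Stab_{T'}(v) * C$ for some (possibly trivial) free factor complement $C$ of $\langle g \rangle$ in $\Stab_{T'}(v')$. The key observation is: if \emph{both} $\Stab_{T'}(v)$ and $\Stab_{T'}(v')$ were contained in proper free factors, I would still need their "product across the fold" to be a proper free factor, which requires the free-factor-system argument — here is where I expect the main obstacle. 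Concretely, the worry is that combining two proper free factors along a common cyclic piece could accidentally fill $\FN$, and one must check this can happen for at most one orbit. I would resolve this exactly as Bestvina–Feighn do: at each folding stage the Lemma 4.1 homotopy equivalence $\psi:\Gamma \to \Gamma' \vee S^1$ exhibits the ambient group as $\pi_1(\Gamma' \vee S^1)$ with the folded-up annulus's core $\langle a \rangle$ splitting off a circle factor; inductively the circle factors and the graph $\Gamma'$ account for a free basis, and the only vertex stabilizer that can "use up" the full rank is the one corresponding to the final graph component $\Gamma'$ that is not split off — giving at most one bad conjugacy class. So I would phrase the induction hypothesis carefully to carry along "at most one orbit of vertex group not in a proper free factor," verify that a single fold either preserves this count or, in the degenerate case where the fold joins the unique bad orbit to another bad orbit (impossible, since there is at most one), leaves it unchanged, and conclude. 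The genuinely delicate point is making the bookkeeping of "which vertex orbit is the bad one" compatible with the fold, and I would handle it by always choosing, as in the displayed Bestvina–Feighn argument, the folded edge so that its core is a free factor of the vertex group on at least one side — that side's vertex group then remains "good" after absorbing the other, so badness is never created, only possibly transported, and the count cannot increase past one.
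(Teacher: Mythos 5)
Your overall strategy---unfold $T$ to a free splitting using the Bestvina--Feighn Lemma~4.1 mechanism and then reason about how folding changes stabilizers---is in the spirit of the paper's appendix discussion, but your proof has a genuine error in the description of the fold and, more importantly, a gap in the inductive step that the paper avoids by a different route.

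The fold $T'_{e=ge}$ with $e=(v,v')$ and $1\neq g\in\Stab_{T'}(v)$ does \emph{not} merge $v$ with $v'$: it identifies $v'$ with $gv'$, which are in the same orbit; the vertex $v$ and its stabilizer are unchanged. Consequently the new vertex stabilizer is $\Stab_{T'}(v')\ast\langle g\rangle$ (generated by $\Stab_{T'}(v')$ and the element $g$, which lies in $\Stab_{T'}(v)$ but not in $\Stab_{T'}(v')$ since the edge stabilizer is trivial), not an amalgam of $\Stab_{T'}(v)$ with a conjugate of $\Stab_{T'}(v')$ over $\langle g\rangle$ as you write. Likewise, the claim that ``$\langle g\rangle$ is a free factor of $\Stab_{T'}(v')$'' is false---$g\notin\Stab_{T'}(v')$; it is automatically a free factor of the \emph{new} group $\Stab_{T'}(v')\ast\langle g\rangle$, which is not the content of Lemma~4.1 in the folding direction.

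The deeper problem is that your inductive hypothesis---``$T'$ has at most one conjugacy class of vertex stabilizer not contained in a proper free factor''---is too weak to carry the inductive step. Two things go wrong. First, for the edge part you need $\langle g\rangle$ in a proper free factor; you argue $\langle g\rangle\leq\Stab_{T'}(v')$ (false) and then use the IH, but with the correct formula $g\in\Stab_{T'}(v)$, and $\Stab_{T'}(v)$ might \emph{be} the one bad orbit allowed by your IH. Second, for the vertex part: the only vertex group that changes is $\Stab_{T'}(v')\ast\langle g\rangle$, and a priori this could fail to lie in a proper free factor even when both $\Stab_{T'}(v')$ and $\Stab_{T'}(v)$ do---so if $T'$ already had a different bad orbit, $T$ could have two. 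Your proposed fix (``that side's vertex group remains good after absorbing the other'') rests on the incorrect fold model and does not close the gap. The missing observation, which is what the paper actually uses, is that a very small simplicial tree $T'$ that contains a trivially-stabilized edge $e$ has \emph{no} bad vertex groups at all: collapsing every edge outside the orbit of $e$ yields a one-edge free splitting $S$, and every vertex group of $T'$ fixes a point of $S$ and hence lies in a vertex group of $S$, which is a proper free factor. With that in hand the argument is a one-shot, not an induction: unfold $T$ once (via Lemma~4.1) to such a $T'$; then $\Stab_{T'}(v)$ is in a proper free factor, giving the edge claim for $\langle g\rangle$, and all vertex groups of $T$ except possibly the single altered one $\Stab_{T'}(v')\ast\langle g\rangle$ inherit containment in proper free factors, giving the ``at most one'' claim. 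You should replace the inductive bookkeeping by this collapse argument.
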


\begin{proof}
 We have that $T$ unfolds to $T'$, where $T'$ has an edge $e$ with trivial stabilizer and such that every arc stabilizer in $T$ is contained in a vertex stabilizer of $T'$; collapsing every edge outside of the orbit of $e$ to a point gives the conclusions.  
\end{proof}

\subsection{Surface Trees}

References for this subsection are \cite{CB88}, \cite{BF95}, and \cite{MS84}.  Let $\Sigma$ be a hyperbolic surface, and let $\Lambda=(\lambda, \mu)$ be a measured lamination on $\Sigma$, where $\mu$ is assumed to have full support.  In this case $\lambda$ decomposes as a disjoint union of simple closed curve components $c_j$ and minimal components $m_i$ containing more than one leaf.  The dual tree $T_{\Lambda}$ is defined as follows: $\Lambda$ lifts to $\tilde{\Lambda}$ on the universal cover $\tilde{\Sigma}$ of $\Sigma$; now collapse each leaf of $\tilde{\lambda}$ and each complementary region of $\tilde{\lambda}$ to a point.  The resulting leaf space is a union of arcs $I$ coming from arcs $\tilde{I}$ in $\tilde{\Sigma}$ transverse to $\tilde{\lambda}$, and any two points in this space are contained in such an arc; $I$ gets a Borel measure from $\tilde{\mu}$, which gives a pseudo-metric $d_{\mu}$ on the leaf space: $d_{\mu}(x,y)=\inf \mu(I)$, such that $x$ and $y$ are the endpoints of $I$.  The quotient metric space is $T_{\Lambda}$.  The action of $\pi_1(\Sigma)$ on $\tilde{\Sigma}$ by deck transformations descends to an action of $T_{\Lambda}$, and invariance of $\tilde{\mu}$ gives that the induced action is by isometries.  

The following are easy exercises: (1) $T_{\Lambda}$ has dense orbits if and only if $\lambda$ contains no simple closed curve component, and (2) $\lambda$ contains a simple closed curve component if and only if $T_{\Lambda}$ contains a non-degnerate segment that generates a transverse family if and only if $T_{\Lambda}$ contains a non-degenerate segment with non-trivial stabilizer.  

The decomposition $\lambda = \cup_j c_j \bigcup \cup_i m_i$ gives a transverse covering of $T_{\Lambda}$ by subtrees that either are simplicial edges with $\mathbb{Z}$-stabilizer or indecomposable trees; indeed, indecomposable trees are defined precisely to generalize trees dual to minimal and filling surface laminations.

We now discuss the characteristic factors associated to a surface tree.  As a concrete example, consider a surface $\Sigma$ with one boundary component and genus two; we have an identification $\pi_1(\Sigma)=\FN$.  We explore two examples of laminations on $\Sigma$.  For discussion, lay $\Sigma$ flat on a table with the boundary component on the right, and think of $\Sigma$ as a punctured torus $S_l$ on the left, glued to a twice-punctured torus $S_r$ on the right.

{\bf 1:} Equip $\Sigma$ with a minimal measured lamination $(L,\mu)$, where $L$ fills up $S_l$ but avoids $S_r$.  In this case, it is well-known that $F=\pi_1(S_l)$ is a free factor of $\FN=\pi_1(\Sigma)$.  Now $F$ acts arationally on its minimal subtree in $T=T_{\Lambda}$, so $F$ is a characteristic dynamical factor for $T$.  The point stabilizers in $T$ are conjugate to $\pi_1(S_r)$, and the only elliptic element of $T$ that is contained in a proper free factor of $\FN$ is the left boundary component of $S_r$, so $T$ is not peripherally reduced by any factor.  

{\bf 2:} Equip $\Sigma$ with a minimal measured lamination $(L,\mu)$, where $L$ fills up $S_r$ but avoids $S_l$; let $T$ denote the corresponding tree.  Note that for any subgroup $H \leq \FN$ to dynamically reduce $T$, we certainly must have that $H$ carries a leaf of $L$, and since no leaf of $L$ is contained in a subsurface whose fundamental group is contained in a proper free factor of $\FN$, $T$ is not dynamically reducible by any factor.  On the other hand, since $F=\pi_1(S_l)$ is a free factor and evidently fixes a point in $T$, we have that $F$ peripherally reduces $T$.  Further, $F$ is the unique characteristic factor for $T$.

\subsection{Graphs of Actions and Extensions}

We recall an alternative point of view of transverse coverings that is convenient for constructions \cite{Lev94, Gui08}.  A \emph{graph of actions} $\mathscr{G}=(S, \{Y_v\}_{v \in V(S)}, \{p_e\}_{e \in E(S)})$ consists of:

\begin{enumerate}
 \item [(i)] a non-trivial simplicial tree $S$, called the \emph{skeleton}, equipped with an action (without inversions) of $\FN$,
 \item [(ii)] for each vertex $v \in V(S)$ of $S$ a tree $Y_v$, called a \emph{vertex tree}, and
 \item [(iii)] for each oriented edge $e \in E(S)$ with terminal vertex $v \in V(S)$ a point $p_e \in Y_v$, called an \emph{attaching point}.
\end{enumerate}

It is required that the projection sending $Y_v \rightarrow p_e$ is equivariant and that for $g \in \FN$, one has $gp_e=p_{ge}$.  Associated to a graph of actions $\mathscr{G}$ is an action of $\FN$ on a tree $T_{\mathscr{G}}$: define a pseudo-metric $d$ on $\coprod_{v\in V(S)} Y_v$: if $x \in Y_u$, $y \in Y_v$, let $e_1...e_k$ be the reduced edge-path from $u$ to $v$ in $S$, \emph{i.e.} $\iota(e_1)=u$, $\tau(e_k)=v$, and $\tau(e_i)=\iota(e_{i+1})$, then

$$
d(x,y)=d_{Y_u}(x,p_{\overline{e_1}}) + d_{Y_{\tau(e_1)}}(p_{e_1},p_{\overline{e_2}}) + ... +d_{Y_v}(p_{e_r},y)
$$

\noindent Passing to the usual quotient of this pseudo-metric space gives a metric space that is a tree, called the \emph{dual} of $\mathscr{G}$; we denote it by $T_{\mathscr{G}}$.  If $T$ is equivariantly isometric to some $T_{\mathscr{G}}$, then we say that $T$ \emph{splits} as a graph of actions.  Graphs of actions were defined and first explored in \cite{Lev94}; later, in \cite{Gui08}, transverse coverings were defined, and the following translation was noted.

\begin{lemma}\label{L.Graphs}\cite[Lemma 1.5]{Gui08}
 Assume that $T$ splits as a graph of actions with vertex trees $\{Y_v\}_{v \in V(S)}$, then the subset of $\{Y_v\}_{v \in V(S)}$ consisting of non-degenerate trees is a transverse covering for $T$.  Conversely, if $T$ has a transverse covering $\{Y_v\}_{v \in V}$, then $T$ splits as a graph of actions whose non-degenerate vertex trees are $\{Y_v\}_{v \in V}$.
\end{lemma}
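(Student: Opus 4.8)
The plan is to prove the two implications separately, using the explicit skeleton construction recalled above (``$S$, the skeleton of $\mathscr{Y}$'') as the dictionary between transverse coverings and graphs of actions; throughout I identify each vertex tree with its image in the dual tree.

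For the implication ``graph of actions $\Rightarrow$ transverse covering'', suppose $T$ is equivariantly isometric to $T_{\mathscr{G}}$ for a graph of actions $\mathscr{G}=(S,\{Y_v\},\{p_e\})$. First I would observe, directly from the formula defining the pseudo-metric $d$ on $\coprod_v Y_v$, that each $Y_v$ embeds isometrically in $T_{\mathscr{G}}$ (the relevant reduced edge-path is empty, so no two points of a single $Y_v$ are identified and their $Y_v$-distance is preserved), and then identify $Y_v$ with its image. Equivariance of the construction, together with the requirement $gp_e=p_{ge}$, shows that the sub-collection of non-degenerate $Y_v$'s is $\FN$-invariant. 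To see that it is a transverse family, suppose $Y_u$ and $Y_v$ with $u\neq v$ shared two points; evaluating $d$ along the reduced $S$-edge-path $e_1\cdots e_k$ from $u$ to $v$ forces all the (non-negative) summands to vanish, so any shared point equals the image of $p_{\overline{e_1}}\in Y_u$, a contradiction; hence $Y_u\cap Y_v$ is at most a point. (If the $Y_v$ are not assumed closed one simply passes to closures, which does not affect any of this.) Finally, for any arc $I=[x,y]$ with $x\in Y_u$ and $y\in Y_v$, the definition of $d$ exhibits $I$ inside the finite union $Y_u\cup Y_{\tau(e_1)}\cup\cdots\cup Y_v$ indexed by that edge-path, so the collection is a transverse covering.

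For the converse, let $\mathscr{Y}=\{Y_v\}_{v\in V}$ be a transverse covering of $T$, and take $S$ to be its skeleton as already defined: $V(S)=\mathscr{Y}\sqcup\{\text{attaching points}\}$, with an edge joining an attaching point $x$ to $Y\in\mathscr{Y}$ exactly when $x\in Y$; this is a simplicial $\FN$-tree. I would promote $S$ to a graph of actions by placing over a vertex $Y\in\mathscr{Y}$ the vertex tree $Y$ itself, over an attaching-point vertex $x$ the degenerate tree $\{x\}$, and by setting $p_e:=x$ for every oriented edge $e$ whose initial vertex is the attaching point $x$ (so $p_e\in Y$ when $e$ terminates at $Y$, and $p_e=x$ when it terminates at the point $x$). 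Equivariance of the attaching points is inherited from equivariance of the skeleton. The inclusions $Y\hookrightarrow T$ and $\{x\}\hookrightarrow T$ then induce an equivariant, surjective, $1$-Lipschitz map $T_{\mathscr{G}}\to T$, and it remains to check that this map is distance-preserving.

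The hard part will be precisely this last step: for $x\in Y_u$ and $y\in Y_v$ one must show that $d_T(x,y)$ equals the length computed along the $S$-geodesic from $u$ to $v$. This is where the covering hypothesis is used in full rather than mere transversality: since $[x,y]$ is covered by finitely many members of $\mathscr{Y}$ and distinct members meet in at most one point, I would argue by induction on the number of members needed to cover $[x,y]$ that $[x,y]$ is the concatenation of subarcs, one in each vertex tree met along the skeleton geodesic, with the changeovers occurring exactly at attaching points; the lengths then sum to $d_{T_{\mathscr{G}}}(x,y)$. Everything else --- that $S$ is a tree, minimality, and the non-degeneracy bookkeeping --- is either recorded in the skeleton construction above or routine, and I would lean on \cite{Gui08} for those points.
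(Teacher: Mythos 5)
Your proof is correct, but there is nothing in the paper to compare it with: the paper does not prove this lemma, it simply quotes it as \cite[Lemma 1.5]{Gui08}, so the relevant comparison is with Guirardel's original argument, and yours is essentially that standard one --- use the skeleton as the dictionary, read the forward direction off the defining formula for the pseudo-metric on $\coprod_v Y_v$, and for the converse check that the natural map $T_{\mathscr{G}}\to T$ is an equivariant isometry by cutting arcs along the covering. Two remarks. First, in the forward direction the paper's definition of transverse covering requires $\mathscr{Y}=\overline{\mathscr{Y}}$, and your parenthetical ``pass to closures'' is not quite the right fix, since the statement asserts that the non-degenerate $Y_v$ themselves form the covering; but the same computation you use for transversality gives closedness for free: if points $x_n\in Y_v$ converge to the image of some $q\in Y_w$ with $w\neq v$, the formula gives $d(x_n,q)=d_{Y_v}(x_n,p_{\overline{e_1}})+c$ with $c\geq 0$ independent of $n$, forcing $c=0$ and $q$ to be identified with $p_{\overline{e_1}}\in Y_v$. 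Second, the ``hard part'' you only sketch does go through exactly as you describe: each $Y\in\mathscr{Y}$ meets a given arc $[x,y]$ in a single closed subsegment (closed because $Y$ is closed, connected because intersections of subtrees are), so the finitely many members covering $[x,y]$ cut it, greedily from $x$, into non-degenerate subarcs lying in pairwise distinct members, with changeover points lying in two distinct members and hence attaching points; the resulting edge-path in the skeleton is reduced, hence is the $S$-geodesic, and the defining formula evaluated along it returns the sum of the lengths of the subarcs, which is $d_T(x,y)$, matching the $1$-Lipschitz bound in the other direction. So your plan is complete once that routine induction is written out, and it reproduces the cited proof rather than offering a genuinely different route.
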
  

The skeleton $S$ is a splitting of $\FN$ coded by the transverse covering.  The degenerate vertex trees in a graph of actions structure record embedding information for point stabilizers in non-degnerate vertex trees.  
 
Now, we use graphs of actions to build new actions.  Let's first see how to build examples {\bf 1} and {\bf 2} from the previous section.  For {\bf 1} start with $S_l$ equipped with the lamination as above; this gives a one vertex action.  Note that $S_r$ deformation retracts onto a graph $G$; $\pi_1(G) \curvearrowright \{pt.\}$ is the other vertex action.  Let $S$ be the simplicial tree corresponding to splitting the surface $\Sigma$ along the boundary of $S_l$.  The image of the obvious (homotopy class of) map from the boundary of $S_l$ into $G$ lands in a free factor of $\pi_1(G)$.  All this data describes a graph of actions: attaching points are points with non-trivial stabilizer in the tree dual to the lamination on $S_l$, and everything is forced by $\FN$-invariance; the resulting tree is $T$ from {\bf 1}.  We similarly obtain the tree from {\bf 2}. 

We already know from Lemma \ref{L.GoodGraph} how to build trees that do not have dense orbits from vertex actions that are simplicial or have dense orbits, so we focus on actions with dense orbits.  For concreteness, let $T$ be an arational surface tree dual to $(L,\mu)$ on the surface $\Sigma$.  

\vspace{.2cm}

\noindent {\bf Basic Extension:}  For any two points $x,y \in T$, we can form an extension of $T$, which will be a vertex action, as follows.  Let $S$ be the Bass-Serre tree for the HNN-extension $\FN \ast_{\{1\}}$, place $T$ at vertex $v \in S$, let $e$ be an edge incident on $v$, and let $v'$ be the other vertex of $e$.  Glue $x \in T$ to the copy $y'$ of $y$ in the copy $T'$ of $T$ sitting at $v'$, and extend this operation equivariantly over $S$ to get a graph of actions, with corresponding $F_{N+1}$-tree $T'$.  

There are two slightly different possibilities for $T'$.  First, suppose that $x$ and $y$ are in the same orbit in $T$, and let $g \in \FN$ such that $gx=y$.  Let $t$ be the element of $F_{N+1}$ that sends $v$ to $v'$.  One sees that $x=y'=ty=tgx$, so we have introduced a point stabilizer in $T'$; this happens if and only if $x$ and $y$ are in the same orbit in $T$, as is easily checked.  If $x$ and $y$ are in different orbits, then orbits of points with non-trivial stabilizer in $T'$ are in one-to-one correspondence with orbits of points with non-trivial stabilizer in $T$.  The tree $T'$ is easily visualized: represent $x$ and $y$ by points in $L \subseteq \Sigma$, and attach the ends of a string to $x$ and $y$; think of the string as part of the lamination.  Now, pass to the universal cover of $\Sigma \cup string$ and collapse leaves to get $T'$.  The image of $\FN$ in $F_{N+1}$ is the unique characteristic factor, and $T'$ is mixing.

\vspace{.2cm}

\noindent {\bf Extension over Completion Points:} Gilbert Levitt appears to be the first to point out the following: if $T$ is a minimal (non-trivial) $G$-tree with dense orbits, such that $G$ is countable, and such that $T$ contains a branch point, then $T$ is not complete.  Here is his argument.  Since orbits are dense, branch points are dense; this implies that every arc in $T$ is nowhere dense.  Since $G$ is countable and since $T$ is minimal, $T$ is a countable union of arcs (fundamental domains of axes of hyperbolic elements).  The Baire Category Theorem gives that $T$ is not complete; use $\overline{T}$ to denote the metric completion of $T$.

Let $x \in \overline{T} \ssm T$, and let $y \in T$; note that the (unique) direction $d$ at $x$ is sparse in $\overline{T}$, as it meets an arc at most twice.  For an extension $T'$ of $T$ using $x$ and $y$ as above; this is not as easy to visualize as before, but morally looks very similar.  The $T$ gives a transverse family in $T'$ whose members are not closed subtrees of $T'$; further, one sees that $d$ remains a sparse direction in $T'$.  The tree $T'$ is not mixing, since no arc in $T$ can be translated to cover an arc in $T'$ containing a translate of $d$; on the other hand, $T'$ does satisfy a weaker mixing condition: every orbit is dense in every arc of $T'$.  The residual of any transverse family in $T'$ in any supporting subtree is finite (or empty).  The characteristic factor for $T'$ is the image of $\FN$ in $F_{N+1}$.

Now let $x,y \in \overline{T} \ssm T$; note that the directions $d_x$ at $x$ and $d_y$ at $y$ are both sparse.  Let $T'$ be the extension of $T$ as above.  Again $T$ generates a transverse family in $T'$ whose members are not closed.  Now the points $x$ (and $y$) are sparse in $T'$; further, $T'$ neither is mixing nor satisfies arc-dense orbits, since the orbit of $x$ does not meet any arc of $T$.  On the other hand, we retain that every transverse family has finite residual in every finite supporting subtree for $T'$, and the characteristic factor is again $\FN \leq F_{N+1}$. 

All of these extension constructions can be performed using more general skeleta, \emph{e.g.} for $S$ have more than one orbit of vertices; one just starts with a larger collection of vertex actions.  If we take $S$ to be a free splitting with quotient a segment, say, and with ranks of vertex stabilizers $N$ and $M$, and if we take arational, say, $\FN$-tree $T_1$ and arational $F_M$-tree $T_2$, then the resulting $T'$ will have two characteristic dynamical factors: the images of $F_N$ and $F_M$.

\vspace{.2cm}

\noindent {\bf Iterated Extensions:}  Let $T$ be an extension over completion points of an arational $\FN$-tree $T_0$, and choose a completion point $x \in T$ such that for any $z \in T_0$, the arc $[z,x]$ crosses infinitely many translates of $T_0$; that such points $z$ exist is an easy exercise.  Choose $y\in T$ to be some other point, and let $T'$ be the extension of $T$ over $x$ and $y$ as above.  Then the directions in $T$ that are sparse are no longer sparse in $T'$; however, $T'$ does contain a sparse direction as above.  Infinitely many distinct elements of the transverse family $\mathscr{Y}$ generated by $T_0$ meet any finite supporting subtree for $T'$ in an arc.  The tree $T'$ does not satisfy very nice mixing properties; however, it is the case that there is exactly one $[\cdot]$-class of ergodic measures with non-degenerate support, as is the case for every ``HNN-extension'' example presented so far.  Finally, the tree $T$ is reduced by two factors: $\FN$ and $F_{N+1}$, with $\FN$ being the characteristic factor.  

Of course, one can iterate all the above procedures starting with, or using at any step, Bass-Serre trees with more than one orbit of vertices, and possibly with non-trivial edge stabilizers.  In this case, it is rather transparent what will be the reducing factors and characteristic factors.  One obtains trees with more than one $[\cdot]$-class of ergodic measures by using, at some stage, a skeleton with more than one orbit of vertices, or by starting the construction with a tree containing several $[\cdot]$-classes, \emph{e.g.} a surface tree dual to a measured lamination with several (non-curve) minimal components.  






\subsection{Nesting, Interesting Residuals}

We now describe the most interesting basic examples of reducible trees.  Consider the following automorphism $\alpha$ of $F_6=F(a,b,c,d,e,f)$:
\begin{align*}
 a &\mapsto ab\\
 b &\mapsto baeb\\
 c &\mapsto cd\\
 d &\mapsto dced\\
 e &\mapsto ef^2ef\\
 f &\mapsto (fe)^2f^2ef^2ef
\end{align*}
Let $T_0$ be the Cayley tree of $F_6$ relative to the basis given, and metrize $T_0$ by identifying each edge with $[0,1]$.  The aim is to understand the limit tree got by iterating $\alpha$ on $T_0$; for this, it is helpful to use the obvious homotopy equivalence $f:T_0/\F_6 \to T_0/F_6$ that induces $\alpha$, which is a \emph{relative train track} map for $\alpha$.  Since the machinery is well-known at this point, we will use the language of relative train tracks for this example; see \cite{BH92}.  

The main point is that the bottom stratum, corresponding to $\{e,f\}$ is faster growing than the upper strata.  Color the bottom stratum green and the two ``sides'' of the top stratum red and blue, and start iterating the lift of $f$ representing $\alpha$ on $T_0$; after iterating for some time, rescale so that the maximal translation length of a generator is one.  After many iterations, one notes that the red and blue parts of the tree begin to resemble Cantor sets, while the there are large green subtrees.  In fact, the green part of the tree is a transverse family, which is also the case for the red and blue parts, but they are ``becoming degenerate'' after many iterations.  

Let $T$ be the corresponding limit tree.  We have the following transverse families in $T$: (1) the green transverse family $\mathscr{T}_g$, whose stabilizer is $\langle e,f \rangle$, (2) the green/red transverse family, whose stabilizer is $\langle a,b,e,f \rangle$, and (3) the green/blue transverse family, whose stabilizer is $\langle c,d,e,f \rangle$; the collection of these three subgroups is $\mathcal{R}(T)$.  

The tree is not indecomposable and does not split as a graph of actions; the residual of every transverse family in $T$ is degenerate and non-empty.  There are measures $\mu_{red},\mu_{blue} \in \mathcal{M}(T)$, which are supported on the corresponding colored sets.  The trees $T_{\mu_{red}+\mu_{blue}}$, $T_{\mu_{red}}$, and $T_{\mu_{blue}}$ all split as graphs of actions; $T_{\mu_{red}+\mu_{blue}}$ has two non-homothetic ergodic measures with non-degenerate support.

\subsection{Rigid Extnsions of Indecomposable Trees}

We finally give an example of a tree that splits as a graph of actions in an interesting way and should be thought of as a sort of rigid extension.  This example is very important for intuition; the idea for it came from a conversation with Vincent Guirardel and Gilbert Levitt a couple of years ago, and we thank them for explaining this point to us.  

Consider the following automorphism $\alpha$ of $F_4=F(a,b,c,d)$:
\begin{align*}
 a &\mapsto ab\\
 b &\mapsto bacb\\
 c &\mapsto d\\
 d &\mapsto cd
\end{align*}
Let $T_0$ be the Cayley tree for $F_4$ relative to the basis given, and metrize $T_0$ by identifying each edge with $[0,1]$.  As in the prior example, let $T$ be the limit got by iterating $\alpha$ on $T_0$.  Since the lower stratum is slower-growing than the top stratum, $T$ contains a point $x_0$ that is stabilized by $F_2=F(c,d)$.  We note that by construction, if (1) $(X,Y) \in L(T)$,  (2) if the first letter of $X$ is not the same as the first letter of $Y$, and (3) if $X$ contains letter from $\{a^{\pm},b^{\pm}\}$, then the closure of $\FN(X,Y)$ contains a minimal lamination $L_0$ contained in $\partial^2 F(c,d)$, namely the Bestvina-Feighn-Handel \emph{stable lamination} for the inverse of the lower stratum.  In particular, no such leaf of $L(T)$ is carried by a finitely generated subgroup of infinite index in $\FN$.

Using the main result of \cite{R10a}, one gets that $T$ is indecomposable.  We now form an extension of $T$.  Consider vertex actions $T$ and $F(e,f) \curvearrowright \{pt.\}$, where the attaching points of $T$ are translates of $x_0$.  Let $S$ be the Bass-Serre tree for a splitting $F(a,b,c,d)\ast_{F(\alpha,\beta)}F(e,f)$, where the $(\alpha,\beta)$ is sent to $(c,d)$ in the left factor, and $(\alpha,\beta)$ are sent to some, random say, rank-$2$ subgroup $H \leq F(e,f)$.  The main point is that $H$ should not be contained in a proper factor of $F(e,f)$.  Let $T'$ be the tree corresponding to this graph of actions; so $T'$ has a transverse covering by translates of $T$, whose stabilizers are conjugates of the image of $F(a,b,c,d)$, \emph{i.e.} $F(a,b) \ast H$.  Since $T'$ is a graph of actions, $T'$ is not indecomposable; on the other hand, $T'$ is mixing by Proposition \ref{P.NotMixing}.


\bibliographystyle{amsplain}
\bibliography{indecompREF.bib}

\end{document}